\documentclass[preprint,10pt]{elsarticle}




\usepackage{amssymb}
\usepackage{amsthm}
\usepackage{graphics}
\usepackage{amsmath}
\usepackage[dvipsnames,usenames]{color}


\newtheorem{thm}{Theorem}[section]
\newtheorem{cor}[thm]{Corollary}
\newtheorem{lem}[thm]{Lemma}

\newtheorem{ass}[thm]{Assumption}
\newtheorem{rem}{Remark}

\newtheorem{exa}[thm]{Example}
\theoremstyle{definition}

\numberwithin{equation}{section}





\journal{}

\begin{document}

\begin{frontmatter}



\title{Convergence rate in $\mathcal{L}^p$ sense of tamed EM scheme for highly nonlinear  neutral  multiple-delay stochastic McKean-Vlasov equations}

\author[label1]{Shuaibin Gao}
\author[label1]{Qian Guo}
\author[label2]{Junhao Hu}
\author[label3]{Chenggui Yuan}
%
\address[label1]{Department of Mathematics, Shanghai Normal University, Shanghai 200234, China}
\address[label2]{School of Mathematics and Statistics, South-Central University For Nationalities,
Wuhan 430074, China}
\address[label3]{Department of Mathematics, Swansea University, Bay Campus, Swansea, SA1 8EN, U. K.}

\begin{abstract}
This paper focuses on the numerical scheme of highly nonlinear neutral multiple-delay stohchastic McKean-Vlasov equation (NMSMVE) by virtue of the stochastic particle method.
First, under general assumptions, the results about propagation of chaos in $\mathcal{L}^p$ sense are shown.
Then the tamed Euler-Maruyama scheme to the
corresponding particle system is established and the convergence rate in $\mathcal{L}^p$ sense is obtained.
Furthermore, combining these two results gives the convergence error between the objective NMSMVE and  numerical approximation, which is related to the particle number and step size. Finally, two numerical examples are provided to support the finding.
\end{abstract}

\begin{keyword}
The tamed Euler method; Neutral multiple-delay stohchastic McKean-Vlasov equations; Strong convergence rate; Propagation of chaos
\end{keyword}

\end{frontmatter}



\section{Introduction}

The theories of stochastic McKean-Vlasov equations (SMVEs) have been investigated by plenty of scholars, since SMVEs appear in many research fields, such as biological systems, chemistry and mean-field games \cite{2,3,1}. 
The salient feature of SMVEs is that the coefficients depend on the distributions of state variables, which brings difficulties to the research. 
SMVEs are also called distribution dependent stochastic differential equations (SDEs) or mean-field SDEs.
Reviewing the pioneering works, SMVEs were studied by McKean in \cite{4,5,6}, which were inspired by \cite{7}.
The existence and uniqueness of SMVEs were discussed in \cite{8,9,10}. 
As for other theories of SMVEs, we refer the readers to \cite{11,12,13,14,15,16}. 

As so often is the case, the true solutions to SMVEs cannot be expressed explicitly.
Hence, analyzing the numerical solutions is a common way to get the properties of the true solutions.
However, the classical Euler-Maruyama (EM) scheme cannot be used to simulate SDEs with superlinear coefficients well \cite{37}.
By borrowing the ideas in \cite{38,39,40}, the tamed EM scheme for SMVEs with superlinear drift coefficients was proposed in \cite{17}.
The tamed Milstein scheme for SMVEs was established to improve the convergence rate in \cite{18}.

When the time-delay is taken into consideration, the numerical schemes for SMVEs with delay were discussed in \cite{19,30,31,20}.
Neutral SMVEs with delay refer to a class of SMVEs which not only depend on the present, past state variables, but also contain derivatives with delay. 
Neutral SMVEs with delay was approximated by the tamed EM scheme \cite{21}.
However, the equation form is limited. For example, the following scalar equation is not included in \cite{21}:
	\begin{equation}\label{875678}
	\begin{split}
		d[Y(t)+Y^3(t-\rho)]
		&=[-2Y(t)+Y(t-\rho)-2Y^5(t-\rho)+\mathbb{E}|Y(t)|]dt\\
		&~~~+[Y(t)+Y(t-\rho)]dB(t).
	\end{split}
\end{equation}
Here, $\rho$ is the constant delay. Actually, some numerical schemes for neutral stochastic differential delay equations (NSDDEs), whose coefficients are not dependent of the distributions,  also have this limitation, such as \cite{36,42,43,44,41}.
To overcome this shortcoming and relax the constraint of the delay variables, we use the techniques in \cite{22,23} to approximate the highly nonlinear neutral SMVEs with delay.

In this paper, we focus on a class of highly nonlinear neutral multiple-delay stohchastic McKean-Vlasov equations (NMSMVEs) of the form:
\begin{equation}\label{sdde1}
	\begin{split}
		&d [Y(t)-D(Y(t-\rho))]\\
		&=\alpha\left( Y(t), Y(t-\rho_2),\cdots,Y(t-\rho_r), \mathbb{L}_{Y(t)}, \mathbb{L}_{Y(t-\rho_2)},\cdots,\mathbb{L}_{Y(t-\rho_r)}\right) d t\\
		&+\beta\left( Y(t), Y(t-\rho_2),\cdots,Y(t-\rho_r), \mathbb{L}_{Y(t)}, \mathbb{L}_{Y(t-\rho_2)},\cdots,\mathbb{L}_{Y(t-\rho_r)}\right) dB(t),
	\end{split}
\end{equation}
on $t\in[0,T]$, where  $\mathbb{L}_{X(t-\rho_v)}$ is the law of $X$ at time $t-\rho_v$ for $v\in \mathbb{S}_r:=\{1,2,\ldots,r\}$. 
Moreover, for $v\in\mathbb{S}_{r}$, let $0\leq\rho_v\leq\rho$.
Here, 
 $D:\mathbb{R}^{d}\rightarrow\mathbb{R}^{d},$ $\alpha:(\mathbb{R}^{d})^{r}\times(\mathcal{P}_{2}(\mathbb{R}^{d})
	)^{r}\rightarrow\mathbb{R}^{d},$
$\beta:(\mathbb{R}^{d})^{r}\times(\mathcal{P}_{2}(\mathbb{R}^{d})
)^{r}\rightarrow\mathbb{R}^{d\times m}.$

In reality, the multiple-delay systems are very significant, and they turn up on many occasions \cite{24,25,26,27}.
Then in this paper, the tamed EM scheme is established for NMSMVEs (\ref{sdde1}). 
Additionally, the convergence rate in $\mathcal{L}_p$ sense is shown, which is differential from the previous papers \cite{28,18,19,30,21,46,17}, where only the convergence rate in $\mathcal{L}_2$ sense was given. 
After analyzing the existing results, we find that the key to overcome this problem is to obtain the propagation of chaos in $\mathcal{L}_p$ sense. By virtue of the theory in \cite{29}, we give this result in Theorems \ref{thm4.1}  and \ref{thm4.2}.

All in all, the main contributions of the present paper can be stated as follows.
\begin{itemize}
	\item[$\bullet$] There is only one delay in \cite{30,21}, but we shall deal with multiple delays in (\ref{sdde1}). Moreover, the coefficients of (\ref{sdde1}) depend on the distributions of the delay variables.

	\item[$\bullet$] By borrowing the ideas in \cite{22,23}, the form of the equations and the constraint of the delay variables are more general, which are allowed highly nonlinear. 
	
	\item[$\bullet$] The requirement for the neutral term is also allowed highly nonlinear.
	
	\item[$\bullet$] The propagation of chaos in $\mathcal{L}_p$ sense is shown with the aid of the theory in \cite{29}.  The convergence rate in $\mathcal{L}_p$ sense of the tamed EM scheme is presented.
\end{itemize}

The organization for the rest of the paper is as follows. We simplfy the NMSMVEs by the projection operator and give the boundedness of the true solution in Section 2. The propagation of chaos in $\mathcal{L}_p$ sense is shown in Section 3. In Section 4, the tamed EM scheme is established to approximate  NMSMVEs. Section 5 contains an one-dimensional example and a two-dimensional example.

\section{Preliminaries}\label{section2}

Let $\big ( \Omega, \mathcal{F}, \{\mathcal{F}_t\}_{t\ge 0}, \mathbb{P} \big )$ be a complete probability space with a filtration  $\{\mathcal{F}_t\}_{t\ge 0}$ satisfying the usual conditions (i.e., it is increasing and right continuous while $ \mathcal{F}_0$ contains all $\mathbb{P}$-null sets).
For $x\in \mathbb{R}^d$, let $|x|$ be its Euclidean norm.
For the real numbers $b_1, b_2$, denote $b_1\wedge b_2 = \text {min}\{b_1, b_2\}$ and $b_1\vee b_2=\text{max}\{b_1,b_2\}$.
Let $\lfloor b_1  \rfloor$ be the largest integer that does not exceed $b_1$.
For a set $S$, define $\mathbb{I}_S(x)=1$ if $x \in S$ and $\mathbb{I}_S(x)=0$ if $x \notin S$ (i.e., $\mathbb{I}_S$ is indicator function).
Assume that $\mathcal{C}:=\mathcal{C}([-\rho,0];\mathbb{R}^d)$ is the family of all continuous functions $\varphi$ from $[-\rho,0] $ to $\mathbb{R}^d$ with the norm $\|\varphi\|=\text{sup}_{-\rho \le \theta \le 0}|\varphi (\theta)|$.
The probability expectation with respect to $\mathbb{P}$ is defined by $\mathbb{E}$.
For $p\geq1$, $\mathcal{L}^p :=\mathcal{L}^p( \Omega, \mathcal{F},  \mathbb{P} \big )$ is the set of random variables $X$ with $\mathbb{E}|X|^p<\infty$.
Let $B(t) $ be an $m$-dimensional Brownian motion on the probability space.
Denote $\mathbb{R}_+ = [0,+\infty)$.

Let $\delta_{x}(\cdot)$ stand for the Dirac measure at point $x\in \mathbb{R}^{d}$.
Assume that $\mathcal{P}(\mathbb{R}^{d})$ is the family of all probability measures on $\mathbb{R}^{d}$. For $q\geq1$, define
$$\mathcal{P}_{q}(\mathbb{R}^{d})=\left\{\mu\in\mathcal{P}(\mathbb{R}^{d}):\left(\int_{\mathbb{R}^{d}}|x|^{q}\mu(dx)\right)^{1/q}<\infty\right\},$$
and set $\mathcal{W}_{q}(\mu)=\left(\int_{\mathbb{R}^{d}}|x|^{q}\mu(dx)\right)^{1/q}$ for any $\mu\in \mathcal{P}_{q}(\mathbb{R}^{d})$.
For $q\geq1$,  the Wassertein distance of $\mu,\nu\in \mathcal{P}_{q}(\mathbb{R}^{d})$ is defined by
$$\mathbb{W}_{q}(\mu,\nu)=\inf_{\pi\in \mathfrak{C}(\mu,\nu)}\left(\int_{\mathbb{R}^{d}\times\mathbb{R}^{d}}|x-y|^{q}\pi(dx,dy)\right)^{1/q},$$
where $\mathfrak{C}(\mu,\nu)$ is the family of all couplings for $\mu,\nu$, i.e., $\pi(\cdot,\mathbb{R}^{d})=\mu(\cdot)$ and $\pi(\mathbb{R}^{d},\cdot)=\nu(\cdot)$.

We quote Lemma 2.3 in \cite{8} as the following lemma.
\begin{lem}
For any $\mu\in \mathcal{P}_{2}(\mathbb{R}^{d})$, we have $\mathbb{W}_{2}(\mu,\delta_{0})=\mathcal{W}_{2}(\mu)$.
\end{lem}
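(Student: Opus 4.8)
The plan is to exploit the fact that when one of the two marginals is a Dirac measure, the set of admissible couplings $\mathfrak{C}(\mu,\delta_{0})$ collapses to a single element, so the infimum defining $\mathbb{W}_{2}$ is attained trivially by that unique coupling and the integral can be evaluated in closed form.

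First I would show that every $\pi\in\mathfrak{C}(\mu,\delta_{0})$ is supported on $\mathbb{R}^{d}\times\{0\}$. Indeed, the second marginal condition $\pi(\mathbb{R}^{d},\cdot)=\delta_{0}(\cdot)$ applied to the complement of the origin gives $\pi\big(\mathbb{R}^{d}\times(\mathbb{R}^{d}\setminus\{0\})\big)=\delta_{0}(\mathbb{R}^{d}\setminus\{0\})=0$. Hence for every Borel set $A\subseteq\mathbb{R}^{d}$ one has $\pi\big(A\times(\mathbb{R}^{d}\setminus\{0\})\big)=0$, and combining this with the first marginal condition $\pi(A\times\mathbb{R}^{d})=\mu(A)$ yields $\pi(A\times\{0\})=\mu(A)$. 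This identifies the unique coupling as the product measure $\pi(dx,dy)=\mu(dx)\,\delta_{0}(dy)$.

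Next I would substitute this coupling into the definition of the Wasserstein distance. Since all the mass of $\pi$ lies on $\{y=0\}$, the integrand $|x-y|^{2}$ reduces to $|x|^{2}$, so that
\[
\mathbb{W}_{2}(\mu,\delta_{0})=\left(\int_{\mathbb{R}^{d}\times\mathbb{R}^{d}}|x-y|^{2}\,\pi(dx,dy)\right)^{1/2}=\left(\int_{\mathbb{R}^{d}}|x|^{2}\,\mu(dx)\right)^{1/2}=\mathcal{W}_{2}(\mu),
\]
where finiteness of the right-hand side is guaranteed by the standing hypothesis $\mu\in\mathcal{P}_{2}(\mathbb{R}^{d})$. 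Because $\mathfrak{C}(\mu,\delta_{0})$ is a singleton, no genuine minimisation is required and the claimed identity follows at once.

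I do not anticipate any serious difficulty; the only point demanding care is the measure-theoretic justification that the marginal constraint against a Dirac measure forces $\pi$ to be the product $\mu\otimes\delta_{0}$, which is precisely the content of the first step above. Everything thereafter is a direct computation.
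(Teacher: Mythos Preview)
Your argument is correct and complete: the key observation that the second-marginal constraint against $\delta_{0}$ forces $\pi(\mathbb{R}^{d}\times(\mathbb{R}^{d}\setminus\{0\}))=0$, and hence $\pi=\mu\otimes\delta_{0}$, is exactly what is needed, and the subsequent computation is immediate.

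As for comparison with the paper, there is essentially nothing to compare: the paper does not prove this lemma at all but simply quotes it as Lemma~2.3 of \cite{8}. Your write-up therefore supplies a self-contained justification that the paper omits, which is a genuine improvement in expository completeness. The argument you give is the standard one and is precisely what one would expect to find in the cited reference.
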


Define the segment process $y_{t}=\{y(t+\theta):-\rho\leq\theta\leq0\}$ for any $t\geq0$. 
Then $y_{t}\in \mathcal{C}$.
In order to simplify the equation form, we introduce the following projection operator. 
Let $\Gamma_{\theta}(\varphi):\mathcal{C}\rightarrow\mathbb{R}^{d}$, $\Gamma_{\theta}(\varphi)=\varphi(\theta)$ for $\varphi\in\mathcal{C}$ and $\theta\in[-\rho,0]$.
In addition, we set
$$\Gamma(\varphi)=(\Gamma_{\bar{s}_{1}}(\varphi),\Gamma_{\bar{s}_{2}}(\varphi),\cdots,\Gamma_{\bar{s}_{r}}(\varphi)),$$
$$\mathbb{L}_{\Gamma(\varphi)}=(\mathbb{L}_{\Gamma_{\bar{s}_{1}}(\varphi)},\mathbb{L}_{\Gamma_{\bar{s}_{2}}(\varphi)},\cdots,\mathbb{L}_{\Gamma_{\bar{s}_{r}}(\varphi)}),$$
for any $\varphi\in\mathcal{C}$ and $\bar{s}_{1},\bar{s}_{2},\ldots,\bar{s}_{r}\in[-\rho,0]$.
Let $\bar{s}_{1}=0$, $\bar{s}_{r}=-\rho$ throughout the paper.
For another, it should be noted that we can arrange the $r$-delays in (\ref{sdde1}) into a non subtractive sequence $\{\rho_1,\rho_2,\cdots,\rho_r\}$. Then set $\rho_1=-\bar{s}_{1}=0$,  $\rho_r=-\bar{s}_{r}=\rho$ and $\rho_v=-\bar{s}_{v}$, $v\in \{2, 3, \cdots, r-1\}$. 
Based on these notations, NMSMVE (\ref{sdde1}) can be rewritten as
\begin{equation}\label{2.2xin}
	\begin{split}
 &d[Y(t)-D(Y(t-\rho))]\\
 &=\alpha\left(\Gamma(Y_t),\mathbb{L}_{\Gamma(Y_t)}\right)dt+\beta\left(\Gamma(Y_t),\mathbb{L}_{\Gamma(Y_t)}\right) dB(t),~~~ t\in[0,T].
 \end{split}
\end{equation}
Here, the initial data $\{Y(\theta):\theta\in [-\rho,0]\}=\xi \in\mathcal{C}$.
Moreover,
 $D:\mathbb{R}^{d}\rightarrow\mathbb{R}^{d},$
$\alpha:\left(\mathbb{R}^{d}\right)^{r}\times\left(\mathcal{P}_{2}(\mathbb{R}^{d})\right)^{r}\rightarrow\mathbb{R}^{d},$
$\beta:\left(\mathbb{R}^{d}\right)^{r}\times\left(\mathcal{P}_{2}(\mathbb{R}^{d})\right)^{r}\rightarrow\mathbb{R}^{d\times m}$
are all Borel-measurable. 

\begin{rem}
One can observe that
\begin{equation*}
	\begin{split}
		\Gamma(Y_t)=&(\Gamma_{\bar{s}_{1}}(Y_{t}),\Gamma_{\bar{s}_{2}}(Y_{t}),\cdots,\Gamma_{\bar{s}_{r}}(Y_{t}))
		\\=&(Y_{t}(\bar{s}_{1}),Y_{t}(\bar{s}_{2}),\cdots,Y_{t}(\bar{s}_{r}))\\=&(Y(t),Y(t+\bar{s}_{2}),\cdots,Y(t+\bar{s}_{r}))\\=&(Y(t),Y(t-\rho_2),\cdots,Y(t-\rho)).
	\end{split}
\end{equation*}
\end{rem}
The Theorem 1 in \cite{29} is cited as the following theorem.
\begin{thm}\label{thm2.1}
Let $\{X_{n}\}_{n\geq1}$ be a sequence of independent and identically distributed (i.i.d.) random variables in $\mathbb{R}^{d}$ with the distribution $\mu\in \mathcal{P}_{\bar{p}}(\mathbb{R}^{d})$ and define the empirical measure $\mu_{N}=\frac{1}{N}\sum_{j=1}^{N}\delta_{X_{j}}$ for $j\in \mathbb{S}_N$. Then for $\bar{p}>p\geq2$, there exists a constant $C_{p,\bar{p},d}$ depending on $p$, $\bar{p}$, $d$, such that, for all $N\geq1$,
\begin{equation*}
\begin{split}
\mathbb{E}\left(\mathbb{W}_{p}^{p}(\mu _{N},\mu )\right)\leq& C_{p,\bar{p},d}
\left\{\begin{array}{ll}
N^{-1 / 2}+N^{-(\bar{p}-p)/\bar{p}}, & \text { if } p>d/2\text{ and }\bar{p}\neq2p, \\
N^{-1 / 2} \log(1+N)+N^{-(\bar{p}-p)/\bar{p}}, & \text { if } p=d/2\text{ and }\bar{p}\neq2p,\\
N^{-p / d}+N^{-(\bar{p}-p)/\bar{p}}, & \text { if }2\leq p<d/2.
\end{array}\right.
\end{split}
\end{equation*}
\end{thm}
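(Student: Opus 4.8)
The plan is to follow the dyadic multiresolution strategy underlying quantitative empirical-measure estimates of Fournier--Guillin type. The core idea is to avoid manipulating couplings directly and instead to bound $\mathbb{W}_p^p(\mu_N,\mu)$ by a weighted sum, over dyadic scales, of the discrepancies between the masses that $\mu_N$ and $\mu$ assign to dyadic cubes. Once this deterministic inequality is available, the randomness enters only through the single scalar quantity $\mathbb{E}|\mu_N(F)-\mu(F)|$ for a fixed Borel set $F$, which is elementary because $N\mu_N(F)$ is $\mathrm{Binomial}(N,\mu(F))$.

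First I would establish the deterministic estimate. For $n\geq0$ let $\mathcal{P}_n$ denote the partition of a fixed reference cube into $2^{nd}$ half-open dyadic sub-cubes of side $2^{-n}$. Building a transport plan that, at each scale, matches the conditional masses of $\mu_N$ and $\mu$ on the children of every cube and moves the surplus a distance at most $\sqrt{d}\,2^{-n}$, one arrives at an inequality of the form
\begin{equation*}
\mathbb{W}_p^p(\mu_N,\mu)\leq C_{p,d}\sum_{n\geq0}2^{-np}\sum_{F\in\mathcal{P}_n}\big|\mu_N(F)-\mu(F)\big|.
\end{equation*}
On the unbounded space $\mathbb{R}^d$ this must be set up on dyadic annuli $A_\ell=\{2^{\ell-1}\leq|x|<2^\ell\}$ and then summed in $\ell$; this localisation is exactly where the moment hypothesis is eventually used.

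Next I would take expectations and exploit the Binomial structure. Since $\mathbb{E}\mu_N(F)=\mu(F)$ and $\mathrm{Var}(\mu_N(F))=\mu(F)(1-\mu(F))/N$, Jensen's inequality together with the trivial bound $|\mu_N(F)-\mu(F)|\leq\mu_N(F)+\mu(F)$ gives the two complementary estimates
\begin{equation*}
\mathbb{E}\big|\mu_N(F)-\mu(F)\big|\leq\min\Big(\sqrt{\mu(F)/N},\,2\mu(F)\Big).
\end{equation*}
Summing the first bound over the $2^{nd}$ cubes at scale $n$ by Cauchy--Schwarz (using $\sum_F\mu(F)\leq1$) yields $\sum_F\sqrt{\mu(F)/N}\leq 2^{nd/2}N^{-1/2}$, while the second gives $\sum_F 2\mu(F)\leq2$, so the scale-$n$ contribution is at most $2^{-np}\min(2^{nd/2}N^{-1/2},2)$. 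Splitting the sum over $n$ at the crossover scale $2^{n_0}\approx N^{1/d}$ and summing the two geometric-type series separately produces precisely the three regimes: a convergent geometric series when $p>d/2$ (rate $N^{-1/2}$), logarithmically many equal terms when $p=d/2$ (rate $N^{-1/2}\log(1+N)$), and a sum dominated by its top term when $2\leq p<d/2$ (rate $N^{-p/d}$).

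The remaining, and hardest, step is the tail analysis, which produces the additive term $N^{-(\bar p-p)/\bar p}$ and is where the moment hypothesis $\mathcal{W}_{\bar p}(\mu)<\infty$ enters. On the annuli above, Markov's inequality yields $\mu(A_\ell)\lesssim 2^{-\ell\bar p}\,\mathcal{W}_{\bar{p}}^{\bar{p}}(\mu)$; there the effective sample size is of order $N\mu(A_\ell)$ and the transport distances are of order $2^\ell$, so applying the bounded-domain estimate annulus by annulus (in the regime $p>d/2$, say, with local rate $N^{-1/2}$) gives a contribution of order $2^{\ell(p-\bar p/2)}N^{-1/2}$ per annulus. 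Summing over the relevant annuli $2^\ell\lesssim N^{1/\bar p}$ gives $N^{-1/2}$ when $\bar p>2p$, the stated tail rate $N^{-(\bar p-p)/\bar p}$ when $\bar p<2p$, and --- exactly at the excluded value $\bar p=2p$ --- a borderline sum of $\sim\log N$ equal terms, which is why that case is omitted. I expect the principal difficulty to lie neither in the probabilistic input, which reduces to the elementary Binomial variance bound, nor in either rate taken alone, but in the bookkeeping needed to splice the dyadic and annular decompositions so that the local rate and the moment-driven tail rate combine additively under a single constant $C_{p,\bar p,d}$, uniformly over all $N\geq1$.
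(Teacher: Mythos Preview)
The paper does not prove this theorem: it is stated as a quotation of Theorem~1 in Fournier--Guillin \cite{29} and used as a black box throughout (see the sentence ``The Theorem 1 in \cite{29} is cited as the following theorem'' immediately preceding the statement). There is therefore no in-paper proof to compare your proposal against.

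Your sketch is a faithful high-level outline of the Fournier--Guillin argument itself: the deterministic dyadic-partition bound on $\mathbb{W}_p^p$, the elementary Binomial variance estimate $\mathbb{E}|\mu_N(F)-\mu(F)|\leq\min(\sqrt{\mu(F)/N},\,2\mu(F))$, the scale-by-scale summation producing the three dimensional regimes, and the annular decomposition driving the moment term $N^{-(\bar p-p)/\bar p}$ with the exclusion $\bar p\neq 2p$. So you have correctly reconstructed the proof that the paper is citing, even though the paper makes no attempt to reproduce it.
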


Then the special case is stated as the following corollary.
\begin{cor}\label{cor2.2}
Assume that the settings in Theorem \ref{thm2.1} holds. Then for $\bar{p}>2p$ and $p\geq2$, there exists a constant $C_{p,\bar{p},d}$  depending on $p$, $\bar{p}$, $d$, such that, for all $N\geq1$,
\begin{equation*}
\begin{split}
\mathbb{E}\left(\mathbb{W}_{p}^{p}(\mu _{N},\mu )\right)\leq& C_{p,\bar{p},d}
\left\{\begin{array}{ll}
N^{-1 / 2}, & \text { if } p>d/2, \\
N^{-1 / 2} \log(1+N), & \text { if } p=d/2,\\
N^{-p / d}, & \text { if }2\leq p<d/2.
\end{array}\right.
\end{split}
\end{equation*}
\end{cor}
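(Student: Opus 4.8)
The plan is to derive Corollary~\ref{cor2.2} directly from Theorem~\ref{thm2.1} by showing that the strengthened hypothesis $\bar{p}>2p$ makes the second summand $N^{-(\bar{p}-p)/\bar{p}}$ negligible relative to the first in every one of the three regimes, so that it can be absorbed into the constant. The one algebraic identity driving everything is $\frac{\bar{p}-p}{\bar{p}}=1-\frac{p}{\bar{p}}$, from which $\bar{p}>2p$ is equivalent to $\frac{p}{\bar{p}}<\frac12$, i.e.\ to $\frac{\bar{p}-p}{\bar{p}}>\frac12$. In particular $\bar{p}>2p$ forces $\bar{p}\neq 2p$, so the side conditions $\bar{p}\neq 2p$ in the first two cases of Theorem~\ref{thm2.1} hold automatically and the theorem applies verbatim under the hypotheses of the corollary.

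Next I would compare decay exponents case by case, using the elementary fact that $N^{-a}\le N^{-b}$ whenever $N\ge1$ and $a\ge b\ge0$. When $p>d/2$ the leading term is $N^{-1/2}$; since $\frac{\bar{p}-p}{\bar{p}}>\frac12$ we get $N^{-(\bar{p}-p)/\bar{p}}\le N^{-1/2}$, hence $N^{-1/2}+N^{-(\bar{p}-p)/\bar{p}}\le 2N^{-1/2}$. When $2\le p<d/2$ the leading term is $N^{-p/d}$, and $p<d/2$ gives $\frac{p}{d}<\frac12<\frac{\bar{p}-p}{\bar{p}}$, so again $N^{-(\bar{p}-p)/\bar{p}}\le N^{-p/d}$ and the sum is at most $2N^{-p/d}$. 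In the borderline case $p=d/2$ the leading term is $N^{-1/2}\log(1+N)$; here $N^{-(\bar{p}-p)/\bar{p}}\le N^{-1/2}$, and since $\log(1+N)\ge\log 2>0$ for all $N\ge1$ one has $N^{-1/2}\le \frac{1}{\log 2}N^{-1/2}\log(1+N)$, so the sum is bounded by a constant multiple of $N^{-1/2}\log(1+N)$.

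Doubling (or, in the logarithmic case, enlarging by the factor $1+\frac{1}{\log 2}$) the constant $C_{p,\bar{p},d}$ from Theorem~\ref{thm2.1} then yields exactly the three stated estimates. There is essentially no deep obstacle here: the whole content is the equivalence $\bar{p}>2p\Leftrightarrow\frac{\bar{p}-p}{\bar{p}}>\frac12$ together with a comparison of polynomial rates, and the only point requiring a moment's care is the uniform absorption of the $N^{-1/2}$ term into $N^{-1/2}\log(1+N)$ in the boundary regime $p=d/2$, which is handled by the lower bound $\log(1+N)\ge\log 2$ valid for every $N\ge1$.
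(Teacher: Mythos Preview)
Your proposal is correct. The paper does not actually give a proof of this corollary; it simply states it as ``the special case'' of Theorem~\ref{thm2.1}, leaving the elementary absorption of the $N^{-(\bar p-p)/\bar p}$ term implicit, and your argument spells out precisely those details in the intended way.
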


In order to make assumptions on coefficients, we denote  $U_{i}: \mathbb{R}^{d}\times\mathbb{R}^{d}\rightarrow\mathbb{R}$, $i=1,2,3$ and  there exist constants $K_{U}>0$ and $l_{i}\geq 1$ such that
\begin{equation}\label{2.10}
	0\leq U_{i}(\bar{x},\hat{x})\leq K_{U}(1+|\bar{x}|^{l_{i}}+|\hat{x}|^{l_{i}}),
\end{equation}
for any $\bar{x},\hat{x}\in \mathbb{R}^{d}$ and $i=1,2,3$.

Let $x^{(r)}=\left(x_{1},x_{2},\cdots,x_{r}\right)$, $y^{(r)}=\left(y_{1},y_{2},\cdots,y_{r}\right)$ for $x_{i}$, $y_{i}\in \mathbb{R}^{d}$, $i\in \mathbb{S}_r$, and $\mu^{(r)}=\left(\mu_{1},\mu_{2},\cdots,\mu_{r}\right)$, $\nu^{(r)}=\left(\nu_{1},\nu_{2},\cdots,\nu_{r}\right)$ for $\mu_{i}$, $\nu_{i}\in\mathcal{P}_{2}(\mathbb{R}^{d})$, $i\in \mathbb{S}_r$.

\begin{ass}\label{Halpha}
There exists a constant $K_1>0$ such that
\begin{equation*}
 \left|\alpha(x^{(r)},\mu^{(r)})-\alpha(y^{(r)},\mu^{(r)})\right|\leq K_{1}\sum_{i=1}^{r}\left[U_{1}(x_{i},y_{i})|x_{i}-y_{i}|\right],
\end{equation*}
\begin{equation*}
 \left|\alpha(x^{(r)},\mu^{(r)})-\alpha(x^{(r)},\nu^{(r)})\right|\leq K_{1}\sum_{i=1}^{r}\mathbb{W}_{2}(\mu _{i},\nu_{i} ),
\end{equation*}
\begin{equation*}
\begin{split}
 &\left(x_{1}-D(x_{r})-y_{1}+D(y_{r})\right)^{T}\left(\alpha(x^{(r)},\mu^{(r)})-\alpha(y^{(r)},\nu^{(r)})\right)\\&\leq K_{1}\left[\sum_{i=1}^{r-1}|x_{i}-y_{i}|^{2}+U_{2}^{2}(x_{r},y_{r})|x_{r}-y_{r}|^{2}+\sum_{i=1}^{r}\mathbb{W}_{2}^{2}(\mu _{i},\nu_{i} )\right],
 \end{split}
\end{equation*}
for any $x^{(r)}, y^{(r)}\in (\mathbb{R}^{d})^{r}$ and $\mu^{(r)}, \nu^{(r)}\in (\mathcal{P}_{2}(\mathbb{R}^{d}))^{r}$.
\end{ass}

\begin{ass}\label{Hbeta}
There exists a constant $K_2>0$ such that
\begin{equation*}
 \left|\beta(x^{(r)},\mu^{(r)})-\beta(y^{(r)},\mu^{(r)})\right|\leq K_{2}\left[\sum_{i=1}^{r-1}|x_{i}-y_{i}|+U_{2}(x_{r},y_{r})|x_{r}-y_{r}|\right],
\end{equation*}
\begin{equation*}
 \left|\beta(x^{(r)},\mu^{(r)})-\beta(x^{(r)},\nu^{(r)})\right|\leq K_{2}\sum_{i=1}^{r}\mathbb{W}_{2}(\mu _{i},\nu_{i} ),
\end{equation*}
for any $x^{(r)}, y^{(r)}\in (\mathbb{R}^{d})^{r}$ and $\mu^{(r)}, \nu^{(r)}\in (\mathcal{P}_{2}(\mathbb{R}^{d}))^{r}$.
\end{ass}

\begin{ass}\label{HD}
$D(0)=0$ and there exists a constant $K_3>0$ such that
\begin{equation*}
 \left|D(x_{r})-D(y_{r})\right|\leq K_3 U_{3}(x_{r},y_{r})|x_{r}-y_{r}|,
\end{equation*}
for any  $x_r$, $y_r\in \mathbb{R}^{d}$.
\end{ass}

\begin{ass}\label{initial}
	There exists a constant $K_4>0$ such that
	\begin{equation*}
		\left|\xi(t_1)-\xi(t_2)\right|\leq K_{4}\left|t_1-t_2\right|^{\frac{1}{2}},
	\end{equation*}
	for all $t_1, t_2\in[-\rho,0]$.
\end{ass}

By Assumptions \ref{Halpha}-\ref{HD}, one can see there exist some constants $\bar{C}_{1}, \bar{C}_{2}, \bar{C}_{3}$ such that
\begin{equation}\label{tuichu1}
 \left|\alpha(x^{(r)},\mu^{(r)})\right|\leq \bar{C}_{1}\left(1+\sum_{i=1}^{r}\left[U_{1}(x_{i},0)|x_{i}|+\mathcal{W}_{2}(\mu _{i} )\right]\right),
\end{equation}
\begin{equation}\label{tuichu2}
\begin{split}
 &\left(x_{1}-D(x_{r})\right)^{T}\alpha(x^{(r)},\mu^{(r)})\leq \bar{C}_{2}\left[1+\sum_{i=1}^{r-1}|x_{i}|^{2}+U_{2}^{2}(x_{r},0)|x_{r}|^{2}+\sum_{i=1}^{r}\mathcal{W}_{2}^{2}(\mu _{i})\right],
 \end{split}
\end{equation}
\begin{equation}\label{tuichu3}
 \left|\beta(x^{(r)},\mu^{(r)})\right|^{2}\leq \bar{C}_{3}\left[1+\sum_{i=1}^{r-1}|x_{i}|^{2}+U_{2}^{2}(x_{r},0)|x_{r}|^{2}+\sum_{i=1}^{r}\mathcal{W}_{2}^{2}(\mu _{i})\right],
\end{equation}
\begin{equation}\label{tuichu4}
	\left|D(x_{r})\right|\leq K_3 U_{3}(x_{r},0)|x_{r}|,
\end{equation}
for any $x^{(r)}\in (\mathbb{R}^{d})^{r}$ and $\mu^{(r)}\in (\mathcal{P}_{2}(\mathbb{R}^{d}))^{r}$.
In the rest of this paper, set $l_{U}=max\{l_{1},l_{2},l_{3}\}$ for simplicity.

\begin{thm}\label{thm3.1}
Let Assumptions \ref{Halpha}-\ref{HD} hold. Then there exists a unique strong solution $Y(t)$ to \eqref{2.2xin} and $Y(t)$ satisfies, for any $\bar{p}> 0$ and $T>0$,
\begin{equation*}
 \mathbb{E}\left(\sup_{0\leq t\leq T}|Y(t)|^{\bar{p}}\right)\leq C.
\end{equation*}
\end{thm}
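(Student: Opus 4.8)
The plan is to prove existence and uniqueness together with the moment bound by a \emph{method of steps} argument that reduces \eqref{2.2xin} to a sequence of McKean--Vlasov stochastic differential equations without delay in the unknown. First I would set $Z(t):=Y(t)-D(Y(t-\rho))$, so that \eqref{2.2xin} reads $dZ(t)=\alpha(\Gamma(Y_t),\mathbb{L}_{\Gamma(Y_t)})\,dt+\beta(\Gamma(Y_t),\mathbb{L}_{\Gamma(Y_t)})\,dB(t)$, a semimartingale to which It\^o's formula applies directly. Partitioning $[0,T]$ into subintervals of length equal to the smallest positive delay $\rho_2$, on the first interval $[0,\rho_2]$ every delayed argument $Y(\cdot-\rho_v)$, $v\ge 2$, and in particular the neutral term $D(Y(\cdot-\rho))$, is determined by the initial datum $\xi$; hence only the current state $Y(t)$ and its law are unknown there. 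Solving successively on $[0,\rho_2],[\rho_2,2\rho_2],\dots$ and carrying forward the bounds, I obtain the global solution, provided that on each interval I can (i) solve a McKean--Vlasov SDE with locally Lipschitz, one-sided bounded coefficients and (ii) establish \emph{all} moments of the solution, so that the superlinear delayed data fed into the next interval have finite moments of every order.

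For step (i) on a generic interval I would combine truncation with a fixed-point argument. Truncating the weights $U_1,U_2,U_3$ at level $R$ yields globally Lipschitz coefficients; freezing the flow of laws $\Lambda=(\Lambda_t)$ turns the truncated equation into a standard neutral stochastic delay equation with globally Lipschitz coefficients, uniquely solvable with finite moments. The induced map $\Phi:\Lambda\mapsto \mathbb{L}_{\Gamma(Y^{\Lambda})}$ is shown to be a contraction by means of the measure-Lipschitz estimates (the second inequalities of Assumptions~\ref{Halpha} and~\ref{Hbeta}), the comparison $\mathbb{W}_2\le\mathcal{W}_2$, and Gronwall's inequality, on a short interval or under an exponentially weighted Wasserstein metric; its unique fixed point is the solution of the truncated McKean--Vlasov problem. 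Uniqueness at the untruncated level follows by applying the one-sided estimate \eqref{tuichu2} to the difference of two solutions.

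For step (ii), the heart of the argument, I would apply It\^o's formula to $|Z(t)|^{\bar p}$ (for $\bar p\ge2$; the range $0<\bar p<2$ then follows by Jensen's inequality). The drift term $\bar p|Z|^{\bar p-2}Z^{T}\alpha$ is controlled by the monotone bound \eqref{tuichu2}, in which the genuinely superlinear contribution $U_2^2(Y(t-\rho),0)|Y(t-\rho)|^2$ appears only in the \emph{delayed} variable, while the current state enters merely quadratically; the diffusion contribution is handled by \eqref{tuichu3} and, for the supremum, the Burkholder--Davis--Gundy inequality. Using the change of variables $\int_0^t \mathbb{E}[\,U_2^2(Y(s-\rho),0)|Y(s-\rho)|^2\,]\,ds=\int_{-\rho}^{t-\rho}\mathbb{E}[\,U_2^2(Y(u),0)|Y(u)|^2\,]\,du$, the delayed superlinear term is bounded by the initial data together with the already controlled high-order moments from the previous interval, so no circularity arises. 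Finally I would pass from $Z$ to $Y$ via $|Y(t)|\le|Z(t)|+K_3U_3(Y(t-\rho),0)|Y(t-\rho)|$ from \eqref{tuichu4}, invoking Assumption~\ref{initial} to bound the initial segment, and close the estimate by Gronwall's inequality to get $\mathbb{E}\big(\sup_{0\le t\le T}|Y(t)|^{\bar p}\big)\le C$.

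I expect the main obstacle to be precisely the highly nonlinear neutral term: since $D$ satisfies only $|D(x)-D(y)|\le K_3U_3(x,y)|x-y|$ with a superlinear weight and no contraction constant below one, the classical neutral-equation estimates based on $|D(x)-D(y)|\le\kappa|x-y|$, $\kappa<1$, are unavailable. The resolution is structural: because the neutral and superlinear arguments are all \emph{delayed}, the method of steps keeps them on the right-hand side as data from earlier intervals, where the induction hypothesis already provides finite moments of every order; the monotonicity \eqref{tuichu2} is then only required to absorb the current state, whose growth there is quadratic. Removing the truncation by a localization with exit times $\tau_R=\inf\{t:|Y(t)|\ge R\}$, whose divergence is forced by the uniform moment bound via Chebyshev's inequality, completes the proof.
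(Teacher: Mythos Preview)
Your proposal is correct and shares the paper's core mechanism for the moment bound: apply It\^o's formula to $|Y(t)-D(Y(t-\rho))|^{\bar p}$, control the drift and diffusion via \eqref{tuichu2}--\eqref{tuichu3} and the BDG inequality, pass back to $Y$ through \eqref{tuichu4}, localize with stopping times, and close by an iteration over delay intervals exploiting that the superlinear weights $U_2,U_3$ act only on the $\rho$-delayed variable. The organization differs only mildly. The paper delegates existence and uniqueness to references rather than your truncation/fixed-point sketch, and its iteration proceeds over intervals of length $\rho$ (not $\rho_2$): after Gronwall it obtains
\[
\mathbb{E}\Big(\sup_{0\le u\le t\wedge\tau_{\tilde N}}|Y(u)|^{\bar p}\Big)\le C+C\,\mathbb{E}\Big(\sup_{0\le u\le t\wedge\tau_{\tilde N}}|Y(u-\rho)|^{\bar p\,l_*}\Big),\qquad l_*=l_U+1,
\]
and then runs an explicit decreasing exponent sequence $\bar p_j$ with $\bar p_{j+1}l_*<\bar p_j$, $\bar p_{\lfloor T/\rho\rfloor+1}=\bar p$, stepping from $[0,\rho]$ to $[0,2\rho]$ and so on. This is simply a bookkeeping reformulation of your ``all moments available on the previous interval'' induction, so the two routes are equivalent. (A minor point: you invoke Assumption~\ref{initial} for the initial segment, but $\xi\in\mathcal{C}$ already gives $\|\xi\|<\infty$, which is all that is needed here; the paper does not use Assumption~\ref{initial} in this proof.)
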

\begin{proof}
The NMSMVEs (\ref{2.2xin}) admits a unique strong solution by Theorem 3.5 in \cite{21} with the assistance of Lemma 2.1 in \cite{33}.
We just give the detailed proof of the boundedness of the solution.
Set $\bar{p}\geq 2$ first. For any $t\in[0,T]$, by It\^{o}'s formula, we have
\begin{equation*}
\begin{split}
&\left|Y(t)-D(Y(t-\rho))\right|^{\bar{p}}-\left|\xi(0)-D(\xi(-\rho))\right|^{\bar{p}}
\\&\leq\bar{p}\int_{0}^{t}\left|Y(s)-D(Y(s-\rho))\right|^{\bar{p}-2}\left(Y(s)-D(Y(s-\rho))\right)^{T}\alpha\left(\Gamma(Y_{s}),\mathbb{L}_{\Gamma(Y_{s})}\right)ds
\\&+ \frac{\bar{p}(\bar{p}-1)}{2}\int_{0}^{t}\left|Y(s)-D(Y(s-\rho))\right|^{\bar{p}-2}|\beta\left(\Gamma(Y_{s}),\mathbb{L}_{\Gamma(Y_{s})}\right)|^{2}ds
\\&+\bar{p}\int_{0}^{t}\left|Y(s)-D(Y(s-\rho))\right|^{\bar{p}-2}\left(Y(s)-D(Y(s-\rho))\right)^{T}\beta\left(\Gamma(Y_{s}),\mathbb{L}_{\Gamma(Y_{s})}\right)dB(s)
\\&=:I_{1}(t)+I_{2}(t)+I_{3}(t).
\end{split}
\end{equation*}
Define the stopping time $$\tau_{\tilde{N}}=T\wedge inf\{t\in [0,T]:|Y(t)|\geq\tilde{N}\},$$ for every integer $\tilde{N}\geq1$. Obviously, $\tau_{\tilde{N}}\uparrow T$ a.s.
By (\ref{tuichu2}), (\ref{tuichu3}), H\"{o}lder's inequality and Young's inequality, we derive that
\begin{equation*}
\begin{split}
\mathbb{E}&\left[\sup_{0\leq s\leq t\wedge \tau_{\tilde{N}}}(I_{1}(s)+I_{2}(s))\right]\\\leq& C\mathbb{E}\int_{0}^{t\wedge \tau_{\tilde{N}}}\left|Y(s)-D(Y(s-\rho))\right|^{\bar{p}-2}
\\&\left[1+\sum_{i=1}^{r-1}|Y(s+\bar{s}_{i})|^{2}
+U_{2}^{2}(Y(s-\rho),0)|Y(s-\rho)|^{2}+\sum_{i=1}^{r}\mathcal{W}_{2}^{2}(\mathbb{L}_{Y(s+\bar{s}_{i})})\right]ds
\\\leq& C\mathbb{E}\int_{0}^{t\wedge \tau_{\tilde{N}}}\left(|Y(s)|^{\bar{p}-2}+U^{\bar{p}-2}_{3}(Y(s-\rho),0)|Y(s-\rho)|^{\bar{p}-2}\right)
\\&\left[1+\sum_{i=1}^{r-1}|Y(s+\bar{s}_{i})|^{2}
+U_{2}^{2}(Y(s-\rho),0)|Y(s-\rho)|^{2}+\sum_{i=1}^{r}\mathcal{W}_{2}^{2}(\mathbb{L}_{Y(s+\bar{s}_{i})})\right]ds
\\\leq& C\mathbb{E}\int_{0}^{t\wedge \tau_{\tilde{N}}}\left[1+\sum_{i=1}^{r-1}|Y(s+\bar{s}_{i})|^{\bar{p}}+\|\xi\|^{\bar{p}}\right.
\\&\left.+\left(U_{2}(Y(s-\rho),0)\vee U_{3}(Y(s-\rho),0)\right)^{\bar{p}}|Y(s-\rho)|^{\bar{p}}+\sum_{i=1}^{r}\mathcal{W}_{2}^{p}(\mathbb{L}_{Y(s+\bar{s}_{i})})\right]ds
\\\leq& C\mathbb{E}\int_{0}^{t}\left(1+\sup_{0\leq u\leq s\wedge \tau_{\tilde{N}}}|Y(u)|^{\bar{p}}+|Y(s-\rho)|^{(l_{U}+1)\bar{p}}\right)ds.
\end{split}
\end{equation*}
Using Young's inequality, H\"{o}lder's inequality and BDG's inequality gives that
\begin{equation*}
\begin{split}
\mathbb{E}&\left[\sup_{0\leq s\leq t\wedge \tau_{\tilde{N}}}I_{3}(s)\right]\\\leq& C\mathbb{E}\left[\int_{0}^{t\wedge \tau_{\tilde{N}}}\left|Y(s)-D(Y(s-\rho))\right|^{2\bar{p}-2}
\left|\beta\left(\Gamma(Y_{s}),\mathbb{L}_{\Gamma(Y_{s})}\right)\right|^{2}ds\right]^{1/2}
\\\leq& \frac{1}{2}\mathbb{E}\left(\sup_{0\leq s\leq t\wedge \tau_{\tilde{N}}}\left|Y(s)-D(Y(s-\rho))\right|^{\bar{p}}\right)
\\&+C\mathbb{E}\left(\int_{0}^{t\wedge \tau_{\tilde{N}}}\left[1+\sum_{i=1}^{r-1}|Y(s+\bar{s}_{i})|^{2}
+U_{2}^{2}(Y(s-\rho),0)|Y(s-\rho)|^{2}\right.\right.
\\&\left.\left.+\sum_{i=1}^{r}\mathcal{W}_{2}^{2}(\mathbb{L}_{Y(s+\bar{s}_{i})})\right]ds\right)^{\bar{p}/2}
\\\leq& \frac{1}{2}\mathbb{E}\left(\sup_{0\leq s\leq t\wedge \tau_{\tilde{N}}}\left|Y(s)-D(Y(s-\rho))\right|^{\bar{p}}\right)\\
&+C\mathbb{E}\int_{0}^{t}\left(1+\sup_{0\leq u\leq s\wedge \tau_{\tilde{N}}}|Y(u)|^{\bar{p}}+|Y(s-\rho)|^{(l_{U}+1)\bar{p}}\right)ds.
\end{split}
\end{equation*}
Thus,
\begin{equation*}
\begin{split}
\mathbb{E}&\left(\sup_{0\leq u\leq t\wedge \tau_{\tilde{N}}}\left|Y(u)\right|^{\bar{p}}\right)
\\\leq&C\left[\mathbb{E}\left(\sup_{0\leq u\leq t\wedge \tau_{\tilde{N}}}\left|D(Y(u-\rho))\right|^{\bar{p}}\right)
+\mathbb{E}\left(\sup_{0\leq u\leq t\wedge \tau_{\tilde{N}}}\left|Y(u)-D(Y(u-\rho))\right|^{\bar{p}}\right)\right]
\\\leq&C\left[1+\mathbb{E}\int_{0}^{t}\left(\sup_{0\leq u\leq s\wedge \tau_{\tilde{N}}}\left|Y(u)\right|^{\bar{p}}\right)ds
+\mathbb{E}\int_{0}^{t\wedge \tau_{\tilde{N}}}|Y(s-\rho)|^{(l_{U}+1)\bar{p}}ds\right.
\\&\left.+\mathbb{E}\left(\sup_{0\leq u\leq t\wedge \tau_{\tilde{N}}}\left|Y(u-\rho)\right|^{(l_{U}+1)\bar{p}}\right)\right].
\end{split}
\end{equation*}
Thanks to Gronwall's inequality, we get that
\begin{equation}\label{*1}
\mathbb{E}\left(\sup_{0\leq u\leq t\wedge \tau_{\tilde{N}}}\left|Y(u)\right|^{\bar{p}}\right)
\leq C+C\mathbb{E}\left(\sup_{0\leq u\leq t\wedge \tau_{\tilde{N}}}\left|Y(u-\rho)\right|^{\bar{p}l_{*}}\right),
\end{equation}
where $l_{*}=l_{U}+1$.
Define a sequence $\bar{p}_{j}$ by 
\begin{equation*}
	\bar{p}_{j}=(2-j+\lfloor\frac{T}{\rho}\rfloor)\bar{p}l_{*}^{1-j+\lfloor\frac{T}{\rho}\rfloor},
\quad j=1,2,\ldots,\lfloor\frac{T}{\rho}\rfloor+1.
\end{equation*}
One can see that $\bar{p}_{j+1}l_{*}<\bar{p}_{j}$ for $j=1,2,\ldots,\lfloor\frac{T}{\rho}\rfloor+1$ and $\bar{p}_{\lfloor\frac{T}{\rho}\rfloor+1}=\bar{p}$.
For $u\in[0,\rho]$, \eqref{*1} means that \begin{equation*}
	\mathbb{E}\left(\sup_{0\leq u\leq \rho\wedge \tau_{\tilde{N}}}\left|Y(u)\right|^{\bar{p}_{1}}\right)
\leq C.
\end{equation*}
Then for $u\in [0,2\rho]$, using $\bar{p}_{2}l_{*}<\bar{p}_{1}$ and \eqref{*1} with H\"older's inequality gives that
\begin{equation*}
	\begin{split}
\mathbb{E}\left(\sup_{0\leq u\leq 2\rho\wedge \tau_{\tilde{N}}}\left|Y(u)\right|^{\bar{p}_{2}}\right)
&\leq C+C\mathbb{E}\left(\sup_{0\leq u\leq 2\rho\wedge \tau_{\tilde{N}}}\left|Y(u-\rho)\right|^{\bar{p}_{2}l_*}\right)\\
&\leq C+C\left[\mathbb{E}\left(\sup_{0\leq u\leq 2\rho\wedge \tau_{\tilde{N}}}\left|Y(u-\rho)\right|^{\bar{p}_{1}}\right)\right]^{\frac{\bar{p}_{2}l_{*}}{\bar{p}_{1}}}
\leq C.
\end{split}
\end{equation*}
By induction, we get that
\begin{equation*}
\mathbb{E}\left(\sup_{0\leq u\leq \left[\left((\lfloor\frac{T}{\rho}\rfloor+1)\rho\right)\wedge \tau_{\tilde{N}}\right]}\left|Y(u)\right|^{\bar{p}}\right)
\leq C.
\end{equation*}
The Fatou lemma leads to
\begin{equation*}
	\mathbb{E}\left(\sup_{0\leq u\leq \left[(\lfloor\frac{T}{\rho}\rfloor+1)\rho\right]}\left|Y(u)\right|^{\bar{p}}\right)
\leq C.
\end{equation*}
When $\bar{p}\in(0,2)$, the desired result follows by the H\"{o}lder inequality.
\end{proof}

\section{Propagation of Chaos}\label{Propagation of Chaos}

In this section, we will use the stochastic particle method in \cite{35,34} to approximate NMSMVEs \eqref{2.2xin}.
For any $i\in \mathbb{S}_N$, let $(B^{i},\xi^{i})$ be independent copies of $(B,\xi)$ and all $(B^{i},\xi^{i})$ are i.i.d. Moreover, for $\xi\in \mathcal{C}$, set $\left|\xi^{i}(t_{1})-\xi^{i}(t_{2})\right|\leq |t_{1}- t_{2}|^{1/2}, \forall t_{1}, t_{2}\in[-\rho,0]$.
A non-interacting particle system is given by
\begin{equation}\label{4.1}
d[Y^{i}(t)-D(Y^{i}(t-\rho))]=\alpha\left(\Gamma(Y^{i}_{t}),\mathbb{L}_{\Gamma(Y^{i}_{t})}\right)dt
+\beta\left(\Gamma(Y^{i}_{t}),\mathbb{L}_{\Gamma(Y^{i}_{t})}\right) dB^{i}_{t},
\end{equation}
with the intial value $\xi^{i}$, where
\begin{equation*}
	\begin{split}
	\Gamma(Y^{i}_{t})=(\Gamma_{\bar{s}_{1}}(Y^{i}_{t}),\cdots,\Gamma_{\bar{s}_{r}}(Y^{i}_{t}))
=(Y^{i}(t),\cdots,Y^{i}(t-\rho)),\\
\mathbb{L}_{\Gamma(Y^{i}_{t})}=(\mathbb{L}_{\Gamma_{\bar{s}_{1}}(Y^{i}_{t})},\cdots,\mathbb{L}_{\Gamma_{\bar{s}_{r}}(Y^{i}_{t})})
=(\mathbb{L}_{Y^{i}(t)},\cdots,\mathbb{L}_{Y^{i}(t-\rho)}).
\end{split}
\end{equation*}
One can see that $\mathbb{L}_{\Gamma(Y^{i}_{t})}=\mathbb{L}_{\Gamma(Y_{t})}$, $i\in \mathbb{S}_N$.
To deal with $\mathbb{L}_{\Gamma(Y_{t}^i)}$, we introduce the following interacting particle system of the form
\begin{equation}\label{4.2}
d[Y^{i,N}(t)-D(Y^{i,N}(t-\rho))]=\alpha\left(\Gamma(Y^{i,N}_{t}),\mathbb{L}_{\Gamma(Y^{N}_{t})}\right)dt
+\beta\left(\Gamma(Y^{i,N}_{t}),\mathbb{L}_{\Gamma(Y^{N}_{t})}\right) dB^{i}(t),
\end{equation}
with the intial value $\xi^{i}$, where
$$\Gamma(Y^{i,N}_{t})=(\Gamma_{\bar{s}_{1}}(Y^{i,N}_{t}),\cdots,\Gamma_{\bar{s}_{r}}(Y^{i,N}_{t}))
=(Y^{i,N}(t),\cdots,Y^{i,N}(t-\rho)),$$
$$\mathbb{L}_{\Gamma(Y^{N}_{t})}=(\mathbb{L}_{\Gamma_{\bar{s}_{1}}(Y^{N}_{t})},\cdots,\mathbb{L}_{\Gamma_{\bar{s}_{r}}(Y^{N}_{t})})
=(\mathbb{L}_{Y^{N}(t)},\cdots,\mathbb{L}_{Y^{N}(t-\rho)}),$$
and
 $$\mathbb{L}_{Y^{N}(t-\bar{s}_{v})}(\cdot):=\frac{1}{N}\sum_{j=1}^{N}\delta_{Y^{j,N}(t-\bar{s}_{v})}(\cdot), \quad v\in \mathbb{S}_r.$$
In the following of this paper, let $p\geq2$. The theory of the propagation of chaos is stated as the following theorem.

\begin{thm}\label{thm4.1}
Let Assumptions \ref{Halpha}-\ref{HD} hold and $(pl_{U}+\varepsilon)p<\varepsilon\bar{p}$ hold for $\varepsilon\in(0,1]$. Then there exists a constant $C$ indepentent of $N$ such that, for any $i\in \mathbb{S}_N$,
\begin{equation*}
\begin{split}
\mathbb{E}\left(\sup_{0\leq t\leq T}|Y^{i}(t)-Y^{i,N}(t)|^{p}\right)\leq C
\left\{\begin{array}{ll}
(N^{-1 / 2})^{\lambda_{T,\rho,p}}, & \text { if } p>d/2, \\
{[N^{-1 / 2} \log(1+N)]}^{\lambda_{T,\rho,p}}, & \text { if } p=d/2,\\
{(N^{-p / d})}^{\lambda_{T,\rho,p}}, & \text { if }2\leq p<d/2,
\end{array}\right.
\end{split}
\end{equation*}
where $\lambda_{T,\rho,p}=(\frac{p-\varepsilon}{p})^{\lfloor\frac{T}{\rho}\rfloor}$.
\end{thm}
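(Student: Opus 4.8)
The plan is to control the differences $e^i(t):=Y^i(t)-Y^{i,N}(t)$ by first estimating the differences of the neutral terms
$$G^i(t):=\big[Y^i(t)-D(Y^i(t-\rho))\big]-\big[Y^{i,N}(t)-D(Y^{i,N}(t-\rho))\big],$$
and then recovering $e^i$ from $G^i$ via Assumption \ref{HD}, since $|e^i(t)|\le|G^i(t)|+K_3U_3(Y^i(t-\rho),Y^{i,N}(t-\rho))|e^i(t-\rho)|$. Subtracting \eqref{4.2} from \eqref{4.1} shows that $G^i$ solves an It\^o equation with drift $\alpha(\Gamma(Y^i_t),\mathbb{L}_{\Gamma(Y_t)})-\alpha(\Gamma(Y^{i,N}_t),\mathbb{L}_{\Gamma(Y^N_t)})$ and the analogous $\beta$-difference. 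Before starting the main estimate I would record the uniform-in-$N$ moment bound $\sup_N\sup_i\mathbb{E}\sup_{0\le t\le T}|Y^{i,N}(t)|^{\bar p}\le C$, proved exactly as in Theorem \ref{thm3.1} after noting $\mathcal{W}_2^2(\mathbb{L}_{Y^N(t)})=\frac1N\sum_j|Y^{j,N}(t)|^2$ and using exchangeability of the particles; together with Theorem \ref{thm3.1} this supplies every moment of $Y^i$ and $Y^{i,N}$ that the superlinear terms below will consume.

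Next I would apply It\^o's formula to $|G^i(t)|^p$. For the drift I would use the monotonicity inequality (the third estimate of Assumption \ref{Halpha}) with $x^{(r)}=\Gamma(Y^i_t)$, $y^{(r)}=\Gamma(Y^{i,N}_t)$, $\mu^{(r)}=\mathbb{L}_{\Gamma(Y_t)}$, $\nu^{(r)}=\mathbb{L}_{\Gamma(Y^N_t)}$, whose left factor $x_1-D(x_r)-y_1+D(y_r)$ equals precisely $G^i(t)$; for the It\^o correction and the martingale part I would use Assumption \ref{Hbeta} together with the BDG and Young inequalities, absorbing one half of $\mathbb{E}\sup|G^i|^p$ as in the proof of Theorem \ref{thm3.1}. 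With the weight $|G^i|^{p-2}$ and after Young's inequality this leaves three kinds of terms: the regular delay terms $\sum_{v=1}^{r-1}|e^i(t+\bar s_v)|^p$, the superlinear term $U_2^p(Y^i(t-\rho),Y^{i,N}(t-\rho))\,|e^i(t-\rho)|^p$, and the measure term $\sum_{v=1}^r\mathbb{W}_2^p(\mathbb{L}_{Y(t+\bar s_v)},\mathbb{L}_{Y^N(t+\bar s_v)})$. The last I would split through the empirical measure $\mathbb{L}_{\hat Y^N(s)}:=\frac1N\sum_j\delta_{Y^j(s)}$ of the non-interacting system: the part $\mathbb{W}_2^p(\mathbb{L}_{Y},\mathbb{L}_{\hat Y^N})\le\mathbb{W}_p^p(\mathbb{L}_{\hat Y^N},\mathbb{L}_Y)$ is bounded in expectation by Corollary \ref{cor2.2}, whose hypothesis $\bar p>2p$ is automatic here and whose $\mathcal{P}_{\bar p}$ requirement is met by Theorem \ref{thm3.1}, yielding the base rate I denote $\Phi_N$ (the bracketed quantity in Corollary \ref{cor2.2}); while the coupling bound $\mathbb{W}_2^p(\mathbb{L}_{\hat Y^N},\mathbb{L}_{Y^N})\le\frac1N\sum_j|e^j(s+\bar s_v)|^p$ feeds back into the $e$'s, and exchangeability turns $\frac1N\sum_j\mathbb{E}|e^j|^p$ into $\mathbb{E}|e^i|^p$.

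The heart of the argument, and the step I expect to be hardest, is the superlinear delay term. Writing $U_2$ for $U_2(Y^i(t-\rho),Y^{i,N}(t-\rho))$ and using the polynomial growth \eqref{2.10} together with H\"older's inequality with conjugate exponents $p/\varepsilon$ and $p/(p-\varepsilon)$ in the form $U_2^p|e|^p=(U_2^p|e|^{\varepsilon})\cdot|e|^{p-\varepsilon}$, I would obtain
\begin{equation*}
\mathbb{E}\big[U_2^{p}\,|e^i(t-\rho)|^{p}\big]\le\big(\mathbb{E}\big[U_2^{p^2/\varepsilon}\,|e^i(t-\rho)|^{p}\big]\big)^{\varepsilon/p}\big(\mathbb{E}|e^i(t-\rho)|^{p}\big)^{(p-\varepsilon)/p}.
\end{equation*}
In the first factor I bound $|e^i|^p\le C(|Y^i|^p+|Y^{i,N}|^p)$ and $U_2^{p^2/\varepsilon}\le C(1+|Y^i|^{l_Up^2/\varepsilon}+|Y^{i,N}|^{l_Up^2/\varepsilon})$, so the total power of $Y$ is $l_Up^2/\varepsilon+p$, which is finite precisely because $(pl_U+\varepsilon)p<\varepsilon\bar p$ is equivalent to $l_Up^2/\varepsilon+p<\bar p$; thus the first factor is a constant by the moment bounds, and the whole term is controlled by $C(\mathbb{E}|e^i(t-\rho)|^p)^{(p-\varepsilon)/p}$. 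The same device handles the neutral contribution $U_3^p|e^i(t-\rho)|^p$ arising when passing from $|G^i|^p$ to $|e^i|^p$. I would then close the estimate by induction over the delay intervals. Since $Y^i$ and $Y^{i,N}$ share the initial datum $\xi^i$, we have $e^i\equiv0$ on $[-\rho,0]$, so on $[0,\rho]$ the superlinear delay term vanishes and Gronwall's inequality gives $\mathbb{E}\sup_{[0,\rho]}|e^i|^p\le C\Phi_N=C\Phi_N^{\lambda_0}$ with $\lambda_0=1$. Assuming $\mathbb{E}\sup_{[0,k\rho]}|e^i|^p\le C\Phi_N^{\lambda_k}$, on $[k\rho,(k+1)\rho]$ the argument $t-\rho$ ranges in $[(k-1)\rho,k\rho]$, so the superlinear term is $\le C(\Phi_N^{\lambda_k})^{(p-\varepsilon)/p}=C\Phi_N^{\lambda_{k+1}}$ with $\lambda_{k+1}=\frac{p-\varepsilon}{p}\lambda_k$; the regular and measure terms are linear in $\sup_{[0,s]}|e^i|^p$ plus $\Phi_N$, and since $\lambda_{k+1}\le1$ the slower factor $\Phi_N^{\lambda_{k+1}}$ dominates $\Phi_N$. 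Gronwall's inequality then upgrades this to $\mathbb{E}\sup_{[0,(k+1)\rho]}|e^i|^p\le C\Phi_N^{\lambda_{k+1}}$, and after $\lfloor T/\rho\rfloor$ steps one reaches the exponent $\lambda_{\lfloor T/\rho\rfloor}=(\frac{p-\varepsilon}{p})^{\lfloor T/\rho\rfloor}=\lambda_{T,\rho,p}$, with $\Phi_N$ the three-regime bracket of Corollary \ref{cor2.2}, which is exactly the claimed bound.
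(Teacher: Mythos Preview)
Your proposal is correct and follows essentially the same route as the paper's proof: It\^o's formula applied to $|G^i|^p$ (the paper's $\Xi^i$), the monotonicity condition from Assumption~\ref{Halpha} for the drift, BDG plus Young for the martingale, the H\"older split with exponents $p/\varepsilon$ and $p/(p-\varepsilon)$ to handle the superlinear delay factor $U_2^p|e^i(t-\rho)|^p$ (the paper bounds $|e^i|^\varepsilon\le C(|Y^i|^\varepsilon+|Y^{i,N}|^\varepsilon)$ before H\"older rather than after, but the resulting moment requirement $(pl_U+\varepsilon)p<\varepsilon\bar p$ is the same), the decomposition of the Wasserstein term through the empirical measure of the non-interacting particles combined with exchangeability, and finally the induction over the delay windows $[0,\rho],[0,2\rho],\ldots$ that produces the exponent $(\frac{p-\varepsilon}{p})^{\lfloor T/\rho\rfloor}$. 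Your explicit remark that the uniform-in-$N$ $\bar p$-th moment bound for $Y^{i,N}$ must be recorded first is a point the paper uses without separately stating.
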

\begin{proof}
For any $i\in \mathbb{S}_{N}$ and $t\in[0,T]$, set $$\Xi^{i}(t)=Y^{i}(t)-D(Y^{i}(t-\rho))-Y^{i,N}(t)+D(Y^{i,N}(t-\rho)).$$
Then using It\^{o}'s formula leads to
\begin{equation*}
\begin{split}
&|\Xi^{i}(t)|^{p}-|\Xi^{i}(0)|^{p}\\&\leq p\int_{0}^{t}|\Xi^{i}(s)|^{p-2}(\Xi^{i}(s))^{T}\left[\alpha\left(\Gamma(Y^{i}_{s}),\mathbb{L}_{\Gamma(Y^{i}_{s})}\right)
-\alpha\left(\Gamma(Y^{i,N}_{s}),\mathbb{L}_{\Gamma(Y^{N}_{s})}\right)\right]ds
\\&+\frac{p(p-1)}{2}\int_{0}^{t}|\Xi^{i}(s)|^{p-2}\left|\beta\left(\Gamma(Y^{i}_{s}),\mathbb{L}_{\Gamma(Y^{i}_{s})}\right)
-\beta\left(\Gamma(Y^{i,N}_{s}),\mathbb{L}_{\Gamma(Y^{N}_{s})}\right)\right|^{2}ds
\\&+p\int_{0}^{t}|\Xi^{i}(s)|^{p-2}(\Xi^{i}(s))^{T}
\\&\quad \quad \quad\left[\beta\left(\Gamma(Y^{i}_{s}),\mathbb{L}_{\Gamma(Y^{i}_{s})}\right)
-\beta\left(\Gamma(Y^{i,N}_{s}),\mathbb{L}_{\Gamma(Y^{N}_{s})}\right)\right]dB^{i}(s)
\\&=:J_{1}^{i}(t)+J_{2}^{i}(t)+J_{3}^{i}(t).
\end{split}
\end{equation*}
For $\varepsilon\in(0,1]$, we get from H\"{o}lder's inequality, Young's inequality and  Assumptions \ref{Halpha}, \ref{Hbeta} that
\begin{equation*}
\begin{split}
\mathbb{E}&\left[\sup_{0\leq s\leq t}(J_{1}^{i}(s)+J_{2}^{i}(s))\right]\\\leq& C\mathbb{E}\int_{0}^{t}\left|\Xi^{i}(s)\right|^{p-2}
\left[\sum_{v=1}^{r-1}|Y^{i}(s+\bar{s}_{v})-Y^{i,N}(s+\bar{s}_{v})|^{2}\right.
\\&+U_{2}^{2}(Y^{i}(s-\rho),Y^{i,N}(s-\rho))|Y^{i}(s-\rho)-Y^{i,N}(s-\rho)|^{2}
\\&\left.+\sum_{v=1}^{r}\mathbb{W}_{2}^{2}(\mathbb{L}_{Y^{i}(s+\bar{s}_{v})},\mathbb{L}_{Y^{N}(s+\bar{s}_{v})})\right]ds
\\\leq& C\mathbb{E}\int_{0}^{t}|\Xi^{i}(s)|^{p}ds
+C\mathbb{E}\int_{0}^{t}\left[\sum_{v=1}^{r-1}|Y^{i}(s+\bar{s}_{v})-Y^{i,N}(s+\bar{s}_{v})|^{p}\right.
\\&+U^{p}_{2}(Y^{i}(s-\rho),Y^{i,N}(s-\rho))|Y^{i}(s-\rho)-Y^{i,N}(s-\rho)|^{p}
\\&\left.+\sum_{v=1}^{r}\mathbb{W}_{p}^{p}(\mathbb{L}_{Y^{i}(s+\bar{s}_{v})},\mathbb{L}_{Y^{N}(s+\bar{s}_{v})})\right]ds
\\\leq& C\mathbb{E}\int_{0}^{t}\left|\Xi^{i}(s)\right|^{p}ds
+C\int_{0}^{t}\mathbb{E}\left(\sup_{0\leq u\leq s}|Y^{i}(u)-Y^{i,N}(u)|^{p}\right)ds
\\&+C\int_{0}^{t}\left[\mathbb{E}\left(1+|Y^{i}(s-\rho)|^{l_{U}p+\varepsilon}
+|Y^{i,N}(s-\rho)|^{l_{U}p+\varepsilon}\right)^{p/\varepsilon}\right]^{\varepsilon/p}
\\&\quad \quad \quad\cdot\left[\mathbb{E}|Y^{i}(s-\rho)-Y^{i,N}(s-\rho)|^{p}\right]^{(p-\varepsilon)/p}ds
\\&+C\mathbb{E}\int_{0}^{t}\sum_{v=1}^{r}\mathbb{W}_{p}^{p}(\mathbb{L}_{Y^{i}(s+\bar{s}_{v})},\mathbb{L}_{Y^{N}(s+\bar{s}_{v})})ds
\\
\end{split}
\end{equation*}
\begin{equation*}
	\begin{split}
\leq& C\mathbb{E}\int_{0}^{t}\left|\Xi^{i}(s)\right|^{p}ds
+C\int_{0}^{t}\mathbb{E}\left(\sup_{0\leq u\leq s}|Y^{i}(u)-Y^{i,N}(u)|^{p}\right)ds
\\&+C\int_{0}^{t}\left[\mathbb{E}|Y^{i}(s-\rho)-Y^{i,N}(s-\rho)|^{p}\right]^{(p-\varepsilon)/p}ds
\\&+C\mathbb{E}\int_{0}^{t}\sum_{v=1}^{r}\mathbb{W}_{p}^{p}(\mathbb{L}_{Y^{i}(s+\bar{s}_{v})},\mathbb{L}_{Y^{N}(s+\bar{s}_{v})})ds.
\end{split}
\end{equation*}
By Assumption \ref{Hbeta}, BDG's inequality, Young's inequality and  H\"{o}lder's inequality, we derive that
\begin{equation*}
\begin{split}
\mathbb{E}&\left[\sup_{0\leq s\leq t}J_{3}^{i}(s)\right]\\\leq&C\mathbb{E}\left[\int_{0}^{t}|\Xi^{i}(s)|^{2p-2}\left|\beta\left(\Gamma(Y^{i}_{s}),\mathbb{L}_{\Gamma(Y^{i}_{s})}\right)
-\beta\left(\Gamma(Y^{i,N}_{s}),\mathbb{L}_{\Gamma(Y^{N}_{s})}\right)\right|^{2}ds\right]^{1/2}
\\\leq&\frac{1}{2}\mathbb{E}\left(\sup_{0\leq s\leq t}|\Xi^{i}(s)|^{p}\right)
+C\mathbb{E}\left[\int_{0}^{t}\left[\sum_{v=1}^{r-1}|Y^{i}(s+\bar{s}_{v})-Y^{i,N}(s+\bar{s}_{v})|^{2}\right.\right.
\\&~~~+U_{2}^{2}(Y^{i}(s-\rho),Y^{i,N}(s-\rho))|Y^{i}(s-\rho)-Y^{i,N}(s-\rho)|^{2}
\\&~~~\left.\left.+\sum_{v=1}^{r}\mathbb{W}_{2}^{2}(\mathbb{L}_{Y^{i}(s+\bar{s}_{v})},\mathbb{L}_{Y^{N}(s+\bar{s}_{v})})\right] ds\right]^{p/2}
\\\leq&\frac{1}{2}\mathbb{E}\left(\sup_{0\leq s\leq t}|\Xi^{i}(s)|^{p}\right)+C\int_{0}^{t}\mathbb{E}\left(\sup_{0\leq u\leq s}|Y^{i}(u)-Y^{i,N}(u)|^{p}\right)ds
\\&+C\int_{0}^{t}\left[\mathbb{E}|Y^{i}(s-\rho)-Y^{i,N}(s-\rho)|^{p}\right]^{(p-\varepsilon)/p}ds
\\&+C\mathbb{E}\int_{0}^{t}\sum_{v=1}^{r}\mathbb{W}_{p}^{p}(\mathbb{L}_{Y^{i}(s+\bar{s}_{v})},\mathbb{L}_{Y^{N}(s+\bar{s}_{v})})ds.
\end{split}
\end{equation*}
Thanks to Gronwall's inequality, we have
\begin{equation*}
\begin{split}
\mathbb{E}\left(\sup_{0\leq s\leq t}|\Xi^{i}(s)|^{p}\right)\leq&C\int_{0}^{t}\mathbb{E}\left(\sup_{0\leq u\leq s}|Y^{i}(u)-Y^{i,N}(u)|^{p}\right)ds
\\&+C\int_{0}^{t}\left[\mathbb{E}|Y^{i}(s-\rho)-Y^{i,N}(s-\rho)|^{p}\right]^{(p-\varepsilon)/p}ds
\\&+C\mathbb{E}\int_{0}^{t}\sum_{v=1}^{r}\mathbb{W}_{p}^{p}(\mathbb{L}_{Y^{i}(s+\bar{s}_{v})},\mathbb{L}_{Y^{N}(s+\bar{s}_{v})})ds.
\end{split}
\end{equation*}
Therefore, we get from Assumption \ref{HD} and the technique in the estimation of $J_{1}^{i}(t)+J_{2}^{i}(t)$ that
\begin{equation*}
\begin{split}
\mathbb{E}&\left(\sup_{0\leq s\leq t}|Y^{i}(s)-Y^{i,N}(s)|^{p}\right)
\\\leq&C\mathbb{E}\left(\sup_{0\leq s\leq t}|\Xi^{i}(s)|^{p}\right)+C\mathbb{E}\left(\sup_{0\leq s\leq t}|D(Y^{i}(s-\rho))- D(Y^{i,N}(s-\rho))|^{p}\right)
\\\leq&C\int_{0}^{t}\mathbb{E}\left(\sup_{0\leq u\leq s}|Y^{i}(u)-Y^{i,N}(u)|^{p}\right)ds
\\&+C\left[\mathbb{E}\left(\sup_{0\leq u\leq t}|Y^{i}(u-\rho)-Y^{i,N}(u-\rho)|^{p}\right)\right]^{(p-\varepsilon)/p}
\\&+C\mathbb{E}\int_{0}^{t}\sum_{v=1}^{r}\mathbb{W}_{p}^{p}(\mathbb{L}_{Y^{i}(s+\bar{s}_{v})},\mathbb{L}_{Y^{N}(s+\bar{s}_{v})})ds.
\end{split}
\end{equation*}
Using Gronwall's inequality again yields that
\begin{equation}\label{*2}
\begin{split}
\mathbb{E}&\left(\sup_{0\leq u\leq t}|Y^{i}(u)-Y^{i,N}(u)|^{p}\right)
\\\leq&C\left[\mathbb{E}\left(\sup_{0\leq u\leq t}|Y^{i}(u-\rho)-Y^{i,N}(u-\rho)|^{p}\right)\right]^{(p-\varepsilon)/p}
\\&+C\mathbb{E}\int_{0}^{t}\sum_{v=1}^{r}\mathbb{W}_{p}^{p}(\mathbb{L}_{Y^{i}(s+\bar{s}_{v})},\mathbb{L}_{Y^{N}(s+\bar{s}_{v})})ds.
\end{split}
\end{equation}
For $u\in[0,\rho]$, we get from \eqref{*2} that
\begin{equation*}
\mathbb{E}\left(\sup_{0\leq u\leq \rho}|Y^{i}(u)-Y^{i,N}(u)|^{p}\right)\leq C\mathbb{E}\int_{0}^{\rho}\sum_{v=1}^{r}\mathbb{W}_{p}^{p}(\mathbb{L}_{Y^{i}(s+\bar{s}_{v})},\mathbb{L}_{Y^{N}(s+\bar{s}_{v})})ds.
\end{equation*}
To deal with the Wassertein distance, for $v\in \mathbb{S}_r$ and $t\in [0,T]$, we give the definition $\mathbb{L}_{Y^{*,N}(t+\bar{s}_{v})}(\cdot)$ by
$$\mathbb{L}_{Y^{*,N}(t+\bar{s}_{v})}(\cdot)=\frac{1}{N}\sum_{j=1}^{N}\delta_{Y^{j}(t+\bar{s}_{v})}(\cdot).$$
One can observe that, for $v\in \mathbb{S}_r$ and $s\in [0,\rho]$,
\begin{equation*}
\begin{split}
&\mathbb{W}_{p}^{p}(\mathbb{L}_{Y^{i}(s+\bar{s}_{v})},\mathbb{L}_{Y^{N}(s+\bar{s}_{v})})
\\&\leq C\mathbb{W}_{p}^{p}(\mathbb{L}_{Y^{i}(s+\bar{s}_{v})},\mathbb{L}_{Y^{*,N}(s+\bar{s}_{v})})
+\mathbb{W}_{p}^{p}(\mathbb{L}_{Y^{*,N}(s+\bar{s}_{v})},\mathbb{L}_{Y^{N}(s+\bar{s}_{v})})
\\&\leq C\mathbb{W}_{p}^{p}(\mathbb{L}_{Y^{i}(s+\bar{s}_{v})},\mathbb{L}_{Y^{*,N}(s+\bar{s}_{v})})
+C\frac{1}{N}\sum_{j=1}^{N}\left|Y^{j}(s+\bar{s}_{v})-Y^{j,N}(s+\bar{s}_{v})\right|^{p}.
\end{split}
\end{equation*}
Since all $j$ are identically distributed, we have
$$\mathbb{E}\left(\frac{1}{N}\sum_{j=1}^{N}\left|Y^{j}(s+\bar{s}_{v})-Y^{j,N}(s+\bar{s}_{v})\right|^{p}\right)
=\mathbb{E}\left|Y^{i}(s+\bar{s}_{v})-Y^{i,N}(s+\bar{s}_{v})\right|^{p}.$$
Thus,
\begin{equation*}
\begin{split}
&\mathbb{E}\left(\sup_{0\leq u\leq \rho}|Y^{i}(u)-Y^{i,N}(u)|^{p}\right)
\\&\leq C\mathbb{E}\int_{0}^{\rho}\sum_{v=1}^{r}\left|Y^{i}(s+\bar{s}_{v})-Y^{i,N}(s+\bar{s}_{v})\right|^{p}ds
\\&~~~+C\mathbb{E}\int_{0}^{\rho}\mathbb{W}_{p}^{p}(\mathbb{L}_{Y^{i}(s+\bar{s}_{v})},\mathbb{L}_{Y^{*,N}(s+\bar{s}_{v})})ds
\\&\leq C\int_{0}^{\rho}\mathbb{E}\left(\sup_{0\leq u\leq s}|Y^{i}(u)-Y^{i,N}(u)|^{p}\right)ds
\\&~~~+C\mathbb{E}\int_{0}^{\rho}\mathbb{W}_{p}^{p}(\mathbb{L}_{Y^{i}(s+\bar{s}_{v})},\mathbb{L}_{Y^{*,N}(s+\bar{s}_{v})})ds.
\end{split}
\end{equation*}
Applying  Gronwall's inequality and Corollary \ref{cor2.2} yields that
\begin{equation*}
\begin{split}
\mathbb{E}\left(\sup_{0\leq u\leq \rho}|Y^{i}(u)-Y^{i,N}(u)|^{p}\right)\leq& C
\left\{\begin{array}{ll}
N^{-1 / 2}, & \text { if } p>d/2, \\
N^{-1 / 2} \log(1+N), & \text { if } p=d/2,\\
N^{-p / d}, & \text { if }2\leq p<d/2.
\end{array}\right.
\end{split}
\end{equation*}
For $u\in[0,2\rho]$, using H\"{o}lder's inequality and \eqref{*2} gives that
\begin{equation*}
\begin{split}
\mathbb{E}&\left(\sup_{0\leq u\leq 2\rho}|Y^{i}(u)-Y^{i,N}(u)|^{p}\right)
\\&\leq C\left[\mathbb{E}\left(\sup_{0\leq u\leq 2\rho}|Y^{i}(u-\rho)-Y^{i,N}(u-\rho)|^{p}\right)\right]^{(p-\varepsilon)/p}
\\&+C\mathbb{E}\int_{0}^{2\rho}\sum_{v=1}^{r}\mathbb{W}_{p}^{p}(\mathbb{L}_{Y^{i}(s+\bar{s}_{v})},\mathbb{L}_{Y^{N}(s+\bar{s}_{v})})ds
\\&\leq C\left[\mathbb{E}\left(\sup_{0\leq u\leq \rho}|Y^{i}(u)-Y^{i,N}(u)|^{p}\right)\right]^{(p-\varepsilon)/p}
\\&+C\mathbb{E}\int_{0}^{2\rho}\sum_{v=1}^{r}\mathbb{W}_{p}^{p}(\mathbb{L}_{Y^{i}(s+\bar{s}_{v})},\mathbb{L}_{Y^{*,N}(s+\bar{s}_{v})})ds
\\&+C\int_{0}^{2\rho}\mathbb{E}\left(\sup_{0\leq u\leq s}|Y^{i}(u)-Y^{i,N}(u)|^{p}\right)ds.
\end{split}
\end{equation*}
The  Gronwall inequality means that
\begin{equation*}
\begin{split}
\mathbb{E}&\left(\sup_{0\leq u\leq 2\rho}|Y^{i}(u)-Y^{i,N}(u)|^{p}\right)
\\&\leq C\left[\mathbb{E}\left(\sup_{0\leq u\leq \rho}|Y^{i}(u)-Y^{i,N}(u)|^{p}\right)\right]^{(p-\varepsilon)/p}ds
\\&+C\mathbb{E}\int_{0}^{2\rho}\sum_{v=1}^{r}\mathbb{W}_{p}^{p}(\mathbb{L}_{Y^{i}(s+\bar{s}_{v})},\mathbb{L}_{Y^{*,N}(s+\bar{s}_{v})})ds
\\&\leq C\left\{\begin{array}{ll}
(N^{-1 / 2})^{(p-\varepsilon)/p}, & \text { if } p>d/2, \\
{[N^{-1 / 2} \log(1+N)]^{(p-\varepsilon)/p}}, & \text { if } p=d/2,\\
{(N^{-p / d})^{(p-\varepsilon)/p}}, & \text { if }2\leq p<d/2.
\end{array}\right.
\end{split}
\end{equation*}
For $u\in[0,3\rho]$, we can similarly get that
\begin{equation*}
\begin{split}
\mathbb{E}&\left(\sup_{0\leq u\leq 3\rho}|Y^{i}(u)-Y^{i,N}(u)|^{p}\right)
\\&\leq C\left[\mathbb{E}\left(\sup_{0\leq u\leq 2\rho}|Y^{i}(u)-Y^{i,N}(u)|^{p}\right)\right]^{(p-\varepsilon)/p}ds
\\&+C\mathbb{E}\int_{0}^{3\rho}\sum_{v=1}^{r}\mathbb{W}_{p}^{p}(\mathbb{L}_{Y^{i}(s+\bar{s}_{v})},\mathbb{L}_{Y^{*,N}(s+\bar{s}_{v})})ds
\\&\leq C\left\{\begin{array}{ll}
(N^{-1 / 2})^{(\frac{p-\varepsilon}{p})^2}, & \text { if } p>d/2, \\
{[N^{-1 / 2} \log(1+N)]^{(\frac{p-\varepsilon}{p})^2}}, & \text { if } p=d/2,\\
{(N^{-p / d})^{(\frac{p-\varepsilon}{p})^2}}, & \text { if }2\leq p<d/2.
\end{array}\right.
\end{split}
\end{equation*}
Repeating the same procedures, we obtain that
\begin{equation*}
\begin{split}
\mathbb{E}&\left(\sup_{0\leq u\leq [(\lfloor\frac{T}{\rho}\rfloor+1)\rho]}|Y^{i}(u)-Y^{i,N}(u)|^{p}\right)
\\&\leq C\left\{\begin{array}{ll}
(N^{-1 / 2})^{\lambda_{T,\rho,p}}, & \text { if } p>d/2, \\
{[N^{-1 / 2} \log(1+N)]^{\lambda_{T,\rho,p}}}, & \text { if } p=d/2,\\
{(N^{-p / d})^{\lambda_{T,\rho,p}}}, & \text { if }2\leq p<d/2,
\end{array}\right.
\end{split}
\end{equation*}
where $\lambda_{T,\rho,p}=\left(\frac{p-\varepsilon}{p}\right)^{\lfloor\frac{T}{\rho}\rfloor}$ for $\varepsilon\in(0,1]$.
\end{proof}

\begin{rem}
From Theorem \ref{thm4.1}, we know that the value of $\varepsilon$ influences the rate of convergence
of $Y^{i,N}(\cdot)$ to $Y^{i}(\cdot)$. If the value of $\varepsilon$ is close to 0, the convergence rate
will become larger but $\bar{p}$ needs to be relatively large to make
$(l_{U}p+\varepsilon)p<\varepsilon\bar{p}$ hold. On the contrary, if the value of
$\varepsilon$ is close to 1, the requirement for $\bar{p}$ is not strict but the convergence rate
will become smaller.
\end{rem}

If we impose stronger conditions on delay components, the following theorem reveals the corresponding propagation of chaos.
\begin{thm}\label{thm4.2}
Let Assumptions  \ref{Halpha}-\ref{HD} hold with $U_{2}(x_{r},y_{r})=1$,
$K_3U_{3}(x_{r},y_{r})=K_{D}\in(0,1)$. Then there exists a constant $C$ independent of
$N$ such that, for any $i\in \mathbb{S}_N$ and $2\leq p< \bar{p}$,
\begin{equation*}
\begin{split}
\mathbb{E}&\left(\sup_{0\leq t\leq T}|Y^{i}(t)-Y^{i,N}(t)|^{p}\right)
\\&\leq C\left\{\begin{array}{ll}
N^{-1 / 2}+N^{-(\bar{p}-p)/\bar{p}}, & \text { if } p>d/2\text{ and }\bar{p}\neq2p, \\
N^{-1 / 2} \log(1+N)+N^{-(\bar{p}-p)/\bar{p}}, & \text { if } p=d/2\text{ and }\bar{p}\neq2p,\\
N^{-p / d}+N^{-(\bar{p}-p)/\bar{p}}, & \text { if }2\leq p<d/2.
\end{array}\right.
\end{split}
\end{equation*}
\end{thm}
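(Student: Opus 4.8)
The plan is to follow the same It\^o/Gronwall architecture as in the proof of Theorem \ref{thm4.1}, but to exploit the two strengthened hypotheses ($U_2\equiv1$ and $K_3U_3\equiv K_D\in(0,1)$) in order to eliminate both the superlinear Young splitting and the backward recursion over delay intervals that produced the deteriorating exponent $\lambda_{T,\rho,p}$. First I would set $\Xi^i(t)=Y^i(t)-D(Y^i(t-\rho))-Y^{i,N}(t)+D(Y^{i,N}(t-\rho))$ and apply It\^o's formula to $|\Xi^i(t)|^p$, obtaining the same three terms $J_1^i+J_2^i+J_3^i$ as before. The decisive simplification is that, since $U_2(x_r,y_r)=1$, the delay contribution in Assumptions \ref{Halpha} and \ref{Hbeta} reduces to the \emph{linear} term $|Y^i(s-\rho)-Y^{i,N}(s-\rho)|^2$; consequently no application of Young's inequality with the exponent $(p-\varepsilon)/p$ is needed, and after the BDG inequality and a Gronwall step the estimate takes the form
\begin{equation*}
\mathbb{E}\Big(\sup_{0\le s\le t}|\Xi^i(s)|^p\Big)\le C\int_0^t\Theta(s)\,ds+C\,\mathbb{E}\int_0^t\sum_{v=1}^r\mathbb{W}_p^p\big(\mathbb{L}_{Y^i(s+\bar{s}_v)},\mathbb{L}_{Y^N(s+\bar{s}_v)}\big)\,ds,
\end{equation*}
where $\Theta(s):=\mathbb{E}\big(\sup_{0\le u\le s}|Y^i(u)-Y^{i,N}(u)|^p\big)$ and I have bounded $\mathbb{E}|Y^i(s-\rho)-Y^{i,N}(s-\rho)|^p\le\Theta(s)$.

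The heart of the argument is the absorption step that converts $\Xi^i$ back into $Y^i-Y^{i,N}$ \emph{without} recursion. Because $K_3U_3\equiv K_D<1$, Assumption \ref{HD} yields the contraction $|Y^i(u)-Y^{i,N}(u)|\le|\Xi^i(u)|+K_D|Y^i(u-\rho)-Y^{i,N}(u-\rho)|$. Using the weighted convexity inequality $(a+b)^p\le\theta^{1-p}a^p+(1-\theta)^{1-p}b^p$ and choosing $\theta\in(0,1)$ small enough that $\bar{K}_D:=(1-\theta)^{1-p}K_D^p<1$ (possible precisely because $K_D^p<1$), together with the fact that $Y^i-Y^{i,N}$ vanishes on $[-\rho,0]$ so that $\mathbb{E}(\sup_{0\le u\le t}|Y^i(u-\rho)-Y^{i,N}(u-\rho)|^p)=\Theta((t-\rho)\vee0)\le\Theta(t)$, I would obtain $\Theta(t)\le\theta^{1-p}\mathbb{E}(\sup_{0\le u\le t}|\Xi^i(u)|^p)+\bar{K}_D\Theta(t)$. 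The finiteness of $\Theta(t)$, from Theorem \ref{thm3.1} and the analogous uniform moment bound for the interacting system \eqref{4.2}, lets me subtract, giving $\Theta(t)\le C\,\mathbb{E}(\sup_{0\le u\le t}|\Xi^i(u)|^p)$ with $C$ independent of $N$.

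Finally I would treat the Wasserstein term exactly as in Theorem \ref{thm4.1}: introduce the empirical measure $\mathbb{L}_{Y^{*,N}(s+\bar{s}_v)}=\frac1N\sum_j\delta_{Y^j(s+\bar{s}_v)}$ built from the i.i.d.\ copies, split $\mathbb{W}_p^p(\mathbb{L}_{Y^i},\mathbb{L}_{Y^N})$ into a sampling error and a particle error, use the identical distribution of the indices to replace $\frac1N\sum_j|Y^j-Y^{j,N}|^p$ by $\mathbb{E}|Y^i-Y^{i,N}|^p\le\Theta(s)$, and invoke Theorem \ref{thm2.1} (rather than Corollary \ref{cor2.2}) to bound $\mathbb{E}\,\mathbb{W}_p^p(\mathbb{L}_{Y^i(s+\bar{s}_v)},\mathbb{L}_{Y^{*,N}(s+\bar{s}_v)})$ uniformly in $s\in[0,T]$ by the right-hand rate; here the uniform-in-time moment bound of Theorem \ref{thm3.1} guarantees $\mathbb{L}_{Y^i(s+\bar{s}_v)}\in\mathcal{P}_{\bar{p}}$, and using the full Theorem \ref{thm2.1} is exactly what produces the extra summand $N^{-(\bar{p}-p)/\bar{p}}$ and the relaxed requirement $2\le p<\bar{p}$. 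Combining the three steps yields the single closed inequality $\Theta(t)\le C\int_0^t\Theta(s)\,ds+C\Phi_N$, where $\Phi_N$ denotes the asserted rate, and one application of Gronwall's inequality gives $\Theta(T)\le C\Phi_N$. The main obstacle is the absorption step: everything hinges on the contraction constant $K_D$ being strictly below $1$, since this is what makes $\bar{K}_D<1$ achievable and thereby replaces the delay-interval induction of Theorem \ref{thm4.1} — the source of the factor $\lambda_{T,\rho,p}$ — by a single linear Gronwall estimate with no loss of rate.
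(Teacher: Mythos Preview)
Your proposal is correct and follows essentially the same approach as the paper. The only cosmetic difference is in the absorption step: the paper chooses the specific convexity weight $\theta=K_D$ in the inequality $|a+b|^p\le K_D^{1-p}|a|^p+(1-K_D)^{1-p}|b|^p$, which yields the coefficient exactly $K_D$ on the delayed term, and performs the absorption \emph{pathwise} (before taking expectations) so that no separate moment bound for the interacting system $Y^{i,N}$ is needed.
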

\begin{proof}
Let the notations in this proof be the same as these in Theorem \ref{thm4.1}. We only show the main differences of the proof but omit the
same procedures. From the proof of Theorem \ref{thm4.1}, we know that
\begin{equation*}
\begin{split}
\mathbb{E}&\left[\sup_{0\leq s\leq t}(J_{1}^i(s)+J_{2}^i(s))\right]\\\leq& C\mathbb{E}\int_{0}^{t}\left|\Xi^{i}(s)\right|^{p-2}
\sum_{v=1}^{r}\left[|Y^{i}(s+\bar{s}_{v})-Y^{i,N}(s+\bar{s}_{v})|^{2}\right.
\\&\left.+\mathbb{W}_{2}^{2}(\mathbb{L}_{Y^{i}(s+\bar{s}_{v})},\mathbb{L}_{Y^{N}(s+\bar{s}_{v})})\right]ds
\\\leq& C\int_{0}^{t}\mathbb{E}|\Xi^{i}(s)|^{p}ds
+C\int_{0}^{t}\mathbb{E}\left(\sup_{0\leq u\leq s}|Y^{i}(u)-Y^{i,N}(u)|^{p}\right)ds
\\&+C\mathbb{E}\int_{0}^{t}\sum_{v=1}^{r}\mathbb{W}_{p}^{p}(\mathbb{L}_{Y^{i}(s+\bar{s}_{v})},\mathbb{L}_{Y^{N}(s+\bar{s}_{v})})ds.
\end{split}
\end{equation*}
This result with the estimation of $J_{3}^i(t)$ in the proof of Theorem \ref{thm4.1} gives that
\begin{equation}\label{4.21}
\begin{split}
\mathbb{E}&\left(\sup_{0\leq s\leq t}|\Xi^{i}(s)|^{p}\right)\\\leq&C\int_{0}^{t}\mathbb{E}\left(\sup_{0\leq u\leq s}|Y^{i}(u)-Y^{i,N}(u)|^{p}\right)ds
\\&+C\mathbb{E}\int_{0}^{t}\sum_{v=1}^{r}\mathbb{W}_{p}^{p}(\mathbb{L}_{Y^{i}(s+\bar{s}_{v})},\mathbb{L}_{Y^{N}(s+\bar{s}_{v})})ds,
\end{split}
\end{equation}
where the Gronwall inequality has been used.
Recall the elementary inequality
\begin{equation}\label{4.22}
|a+b|^p\leq\frac{1}{K_{D}^{p-1}}|a|^p+\frac{1}{(1-K_{D})^{p-1}}|b|^p,
\end{equation}
for $p\geq2$, $0<K_{D}<1$ and $a,b\in \mathbb{R}^{d}$.
For any $t\in [0,T]$, using \eqref{4.22} and Assumption \ref{HD} leads to
\begin{equation*}
\begin{split}
&|Y^{i}(t)-Y^{i,N}(t)|^{p}\\=&\left|\Xi^{i}(s)+D(Y^{i}(t-\rho))-D(Y^{i,N}(t-\rho))\right|^{p}
\\\leq&\frac{1}{K_{D}^{p-1}}\left|D(Y^{i}(t-\rho))-D(Y^{i,N}(t-\rho))\right|^{p}+\frac{1}{(1-K_{D})^{p-1}}|\Xi^{i}(s)|^{p}
\\\leq& K_{D}\left|Y^{i}(t-\rho)-Y^{i,N}(t-\rho)\right|^{p}+\frac{1}{(1-K_{D})^{p-1}}|\Xi^{i}(s)|^{p}.
\end{split}
\end{equation*}
Thus,
\begin{equation*}
\begin{split}
&\sup_{0\leq s\leq t}|Y^{i}(s)-Y^{i,N}(s)|^{p}
\\\leq& K_{D}\sup_{0\leq s\leq t}\left|Y^{i}(s-\rho)-Y^{i,N}(s-\rho)\right|^{p}+\frac{1}{(1-K_{D})^{p-1}}\sup_{0\leq s\leq t}|\Xi^{i}(s)|^{p}
\\\leq& \frac{1}{(1-K_{D})^{p}}\sup_{0\leq s\leq t}|\Xi^{i}(s)|^{p}.
\end{split}
\end{equation*}
By the above inequality, (\ref{4.21}), Gronwall's inequality and the technique in the proof of Theorem \ref{thm4.1}, we have
\begin{equation*}
\begin{split}
\mathbb{E}&\left(\sup_{0\leq s\leq t}|Y^{i}(s)-Y^{i,N}(s)|^{p}\right)
\\&\leq C\mathbb{E}\int_{0}^{t}\sum_{v=1}^{r}\mathbb{W}_{p}^{p}(\mathbb{L}_{Y^{i}(s+\bar{s}_{v})},\mathbb{L}_{Y^{N}(s+\bar{s}_{v})})ds
\\&\leq C\int_{0}^{t}\mathbb{E}\left(\sup_{0\leq u\leq s}|Y^{i}(u)-Y^{i,N}(u)|^{p}\right)ds
\\&+C\mathbb{E}\int_{0}^{t}\sum_{v=1}^{r}\mathbb{W}_{p}^{p}(\mathbb{L}_{Y^{i}(s+\bar{s}_{v})},\mathbb{L}_{Y^{*,N}(s+\bar{s}_{v})})ds.
\end{split}
\end{equation*}
Applying the Gronwall inequality and Theorem \ref{thm2.1} leads to
\begin{equation*}
\begin{split}
\mathbb{E}&\left(\sup_{0\leq s\leq t}|Y^{i}(s)-Y^{i,N}(s)|^{p}\right)
\\&\leq C\mathbb{E}\int_{0}^{t}\sum_{v=1}^{r}\mathbb{W}_{p}^{p}(\mathbb{L}_{Y^{i}(s+\bar{s}_{v})},\mathbb{L}_{Y^{*,N}(s+\bar{s}_{v})})ds
\\&\leq C\left\{\begin{array}{ll}
N^{-1 / 2}+N^{-(\bar{p}-p)/\bar{p}}, & \text { if } p>d/2\text{ and }\bar{p}\neq2p, \\
N^{-1 / 2} \log(1+N)+N^{-(\bar{p}-p)/\bar{p}}, & \text { if } p=d/2\text{ and }\bar{p}\neq2p,\\
N^{-p / d}+N^{-(\bar{p}-p)/\bar{p}}, & \text { if }2\leq p<d/2.
\end{array}\right.
\end{split}
\end{equation*}
\end{proof}

\begin{rem}
By comparing Theorem \ref{thm4.1} and Theorem \ref{thm4.2}, we can find that: to relax the constraints of  the delay term and neutral term, we introduce the functions $U_{2}(\cdot,\cdot)$ and $U_{3}(\cdot,\cdot)$, which makes the convergence rate of $Y^{i,N}(\cdot)$ to $Y^{i}(\cdot)$ less ideal.
\end{rem}
\begin{rem}
	The techniques in Theorem \ref{thm4.1} and Theorem \ref{thm4.2} can be used in the theory about the propagation of chaos in $\mathcal{L}^p$ sense.
\end{rem}

\section{Numerical scheme for NMSMVEs}\label{section5}
In this section, we establish the tamed EM method for superlinear NMSMVEs. 
Firstly, the boundedness of the numerical solutions is analyzed. 
Then the strong convergence rate is obtained by using propagation of chaos.
To investigate the tamed EM scheme for \eqref{4.2},  for $\Delta\in(0,1)$ and $\gamma\in (0,1/2]$, define
\begin{equation}\label{5.1}
\alpha_{\Delta}(\Gamma(\varphi),\mathbb{L}_{\Gamma(\varphi)})
=\frac{\alpha(\Gamma(\varphi),\mathbb{L}_{\Gamma(\varphi)})}{1+\Delta^{\gamma}|\alpha(\Gamma(\varphi),\mathbb{L}_{\Gamma(\varphi)})|},
\end{equation}
where $\Gamma(\varphi)\in (\mathbb{R}^{d})^{r}$, $\mathbb{L}_{\Gamma(\varphi)}\in (\mathcal{P}_{\bar{p}}(\mathbb{R}^{d}))^{r}$.
Assume that there exist two positive integers $M$ and $M_{T}$ such that $\Delta=\frac{\rho}{M}=\frac{T}{M_{T}}$ and other positive integers $\bar{k}_{2},\bar{k}_{3},\cdots,\bar{k}_{r-1}$ such that $\frac{-\bar{s}_{2}}{\bar{k}_{2}}=\frac{-\bar{s}_{3}}{\bar{k}_{3}}=\cdots=\frac{-\bar{s}_{r-1}}{\bar{k}_{r-1}}=\Delta.$
Set $t_{k}=k\Delta$, $k=-M,\ldots,0,1,\ldots,M_{T}$. Define the tamed EM scheme as:
\begin{equation*}
\begin{split}
	\left\{\begin{array}{lll}
		&X^{i,N}(t_{k})=\xi^{i}(t_{k}), ~~~ k=-M,-M+1,\ldots,0, \\
		&X^{i,N}(t_{k+1})-D(X^{i,N}(t_{k+1-M}))
		\\&=X^{i,N}(t_{k})-D(X^{i,N}(t_{k-M}))+\alpha_{\Delta}(\Gamma(X^{i,N}_{t_{k}}),\mathbb{L}_{\Gamma(X^{N}_{t_{k}})})\Delta
		\\&~~~+\beta(\Gamma(X^{i,N}_{t_{k}}),\mathbb{L}_{\Gamma(X^{N}_{t_{k}})})\Delta B^{i}_{k},~~~k=0,1,\ldots,M_{T}-1,
	\end{array}\right.
\end{split}
\end{equation*}
where 
\begin{equation*}
\Delta B^{i}_{k}=B^{i}(t_{k+1})-B^{i}(t_{k}),
\end{equation*}
\begin{equation*}
\begin{split}
\Gamma(X^{i,N}_{t_{k}})=\left(X^{i,N}(t_{k}),X^{i,N}(t_{k-\bar{k}_{2}}),\cdots,X^{i,N}(t_{k-M})\right),
\end{split}
\end{equation*}
\begin{equation*}
\begin{split}
\mathbb{L}_{\Gamma(X^{N}_{t_{k}})}=\left(\mathbb{L}_{X^{N}(t_{k})},\mathbb{L}_{X^{N}(t_{k-\bar{k}_{2}})},\cdots,\mathbb{L}_{X^{N}(t_{k-M})}\right),
\end{split}
\end{equation*}
and
 \begin{equation*}
 	\mathbb{L}_{X^{N}(t_{k-\bar{k}_{v}})}(\cdot)
=\frac{1}{N}\sum_{j=1}^{N}\delta_{X^{j,N}(t_{k-\bar{k}_{v}})}(\cdot),~~~v\in \mathbb{S}_r.
\end{equation*}

\begin{rem}
In this discrete-time numerical scheme, for any $i\in \mathbb{S}_{N}$, we use
$\big(X^{i,N}(t_{k}), X^{i,N}(t_{k-\bar{k}_{2}}), \cdots, X^{i,N}(t_{k-M})\big)$
to approximate \\
$\big(Y^{i,N}(t_{k}),Y^{i,N}(t_{k-\bar{k}_{2}}),\cdots,Y^{i,N}(t_{k-M})\big).$
This can be achieved in the numerical simulation. The reason is that: for the largest delay constant $\rho$, we have assumed that there exist a positive integer $M$ such that $\Delta=\frac{\rho}{M}$, which means that there exist $M-1$ time grids to divide the delay interval into $M$ parts. Then other delays may be at these time grids, otherwise, we can make $M$ larger to achieve this goal. Ideally, we have $\frac{-\bar{s}_{2}}{\bar{k}_{2}}=\frac{-\bar{s}_{3}}{\bar{k}_{3}}=\cdots=\frac{-\bar{s}_{r-1}}{\bar{k}_{r-1}}=\frac{\rho}{M}=\Delta$for the most appropriate $M$.
\end{rem}

The continuous-time step process numerical solution on $t\in[-\rho,T]$ is defined by
\begin{equation*}
	\begin{split}
		&X^{i,N}(t)=\sum_{k=-M}^{M_T}X^{i,N}(t_{k})\mathbb{I}_{[t_k,t_{k+1})}(t).
	\end{split}
\end{equation*}
For $t\in[0,T]$, the continuous-sample numerical solution is defined by
\begin{equation}\label{contin}
\begin{split}
&\bar{X}^{i,N}(t)-D(\bar{X}^{i,N}(t-\rho))
\\&=\xi^{i}(0)-D(\xi^{i}(-\rho))+\int_0^t\alpha_{\Delta}(\Gamma(X^{i,N}_{s}),\mathbb{L}_{\Gamma(X^{N}_{s})})ds
\\&+\int_0^t\beta(\Gamma(X^{i,N}_{s}),\mathbb{L}_{\Gamma(X^{N}_{s})})d B^{i}(s),
\end{split}
\end{equation}
where $\Gamma(X^{i,N}_{t})=\Gamma(X^{i,N}_{t_{k}})$ for $t\in[t_k,t_{k+1})$ and $\mathbb{L}_{\Gamma(X^{N}_{t})}(\cdot)=\frac{1}{N}\sum_{j=1}^{N}\delta_{\Gamma(X^{j,N}_{t})}(\cdot)$.
We can find that $\mathbb{L}_{\Gamma(X^{N}_{t})}=\mathbb{L}_{\Gamma(X^{N}_{t_{k}})}$ for any $t\in[t_k,t_{k+1})$. 
Similar to \cite{21,36}, we know that $\bar{X}^{i,N}(t_{k})=X^{i,N}(t_{k})=X^{i,N}(t)$ for any $t\in[t_k,t_{k+1}).$
We get from (\ref{5.1}) that
\begin{equation}\label{tedi1}
|\alpha_{\Delta}(\Gamma(\varphi),\mathbb{L}_{\Gamma(\varphi)})|\leq\Delta^{-\gamma}\wedge|\alpha(\Gamma(\varphi),\mathbb{L}_{\Gamma(\varphi)})|,
\end{equation}
for any  $\Gamma(\varphi)\in (\mathbb{R}^{d})^{r}$, $\mathbb{L}_{\Gamma(\varphi)}\in (\mathcal{P}_{\bar{p}}(\mathbb{R}^{d}))^{r}$.
By (\ref{tuichu2}), we derive that
\begin{equation}\label{tedi2}
\begin{split}
&(x_{1}-D(x_{r}))^{T}\alpha_{\Delta}(x^{(r)},\mu^{(r)})
\\&\leq C\left[1+\sum_{i=1}^{r-1}|x_{i}|^2+U^{2}_{2}(x_{r},0)|x_{r}|^2+\sum_{i=1}^{r}\mathcal{W}_{2}^{2}(\mu_i)\right],
\end{split}
\end{equation}
for any $x^{(r)}\in (\mathbb{R}^{d})^{r}$ and $\mu^{(r)}\in (\mathcal{P}_{2}(\mathbb{R}^{d}))^{r}$.
For simplicity, define $t_{\Delta}=\lfloor\frac{t}{\Delta}\rfloor\Delta$ for any $t\in [-\rho,T]$.

	\begin{lem}\label{xinxinlem5.1}
	Let Assumptions \ref{Halpha}-\ref{HD} hold. Then for any $\tilde{p}>0$, we have
	\begin{equation*}
		\max_{i\in \mathbb{S}_N}\sup_{0< \Delta\leq 1}\sup_{0\leq t\leq T}\mathbb{E}|\bar{X}^{i,N}(t)|^{\tilde{p}}
		\leq C, \quad \forall T>0.
	\end{equation*}
\end{lem}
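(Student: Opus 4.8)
The plan is to mirror the delay-interval induction of Theorem \ref{thm3.1}, adapted to the frozen-coefficient structure of the scheme. First I would reduce to $\tilde{p}\ge 2$, since for $\tilde{p}\in(0,2)$ the claim follows from Jensen's inequality once the $\tilde{p}=2$ bound is known. For $\tilde{p}\ge 2$, introduce the neutral-shifted variable $U^{i}(t):=\bar{X}^{i,N}(t)-D(\bar{X}^{i,N}(t-\rho))$, which by \eqref{contin} solves the clean It\^o equation $dU^{i}(t)=\alpha_{\Delta}(\Gamma(X^{i,N}_{t_{\Delta}}),\mathbb{L}_{\Gamma(X^{N}_{t_{\Delta}})})dt+\beta(\Gamma(X^{i,N}_{t_{\Delta}}),\mathbb{L}_{\Gamma(X^{N}_{t_{\Delta}})})dB^{i}(t)$, with all coefficients frozen at the piecewise-constant grid values. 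Applying It\^o's formula to $|U^{i}(t)|^{\tilde{p}}$ yields a drift inner-product term, a $|\beta|^{2}$ term, and a martingale term, exactly as in Theorem \ref{thm3.1}.

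The one genuinely new point is that the inner product is $|U^{i}(s)|^{\tilde{p}-2}(U^{i}(s))^{T}\alpha_{\Delta}$, whereas the dissipativity bound \eqref{tedi2} is stated for $X^{i,N}(s_{\Delta})-D(X^{i,N}(s_{\Delta}-\rho))$. Since $\rho=M\Delta$, both $s_{\Delta}$ and $s_{\Delta}-\rho$ are grid points, so $\bar{X}^{i,N}(s_{\Delta})=X^{i,N}(s_{\Delta})$ and $\bar{X}^{i,N}(s_{\Delta}-\rho)=X^{i,N}(s_{\Delta}-\rho)$; hence $U^{i}(s_{\Delta})$ is precisely the quantity to which \eqref{tedi2} applies. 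I would therefore split $(U^{i}(s))^{T}\alpha_{\Delta}=(U^{i}(s_{\Delta}))^{T}\alpha_{\Delta}+(U^{i}(s)-U^{i}(s_{\Delta}))^{T}\alpha_{\Delta}$, bounding the first summand by \eqref{tedi2} and the second by $|U^{i}(s)-U^{i}(s_{\Delta})|\,\Delta^{-\gamma}$ via the taming bound \eqref{tedi1}. The one-step increment $U^{i}(s)-U^{i}(s_{\Delta})=\int_{s_{\Delta}}^{s}\alpha_{\Delta}\,du+\int_{s_{\Delta}}^{s}\beta\,dB^{i}$ is controlled by $C\Delta^{(1-\gamma)\tilde p}$ for the tamed drift part and, through the BDG inequality, by $C\Delta^{\tilde p/2}\,\mathbb{E}|\beta|^{\tilde p}$ for the diffusion part; since $\gamma\le 1/2$, both carry a nonnegative power of $\Delta$, so after Young's inequality these corrections are absorbed into $\mathbb{E}|U^{i}(s)|^{\tilde p}$ plus integrals of the grid moments, uniformly in $0<\Delta\le 1$.

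For the measure-dependent terms I would use $\mathcal{W}_{2}^{2}(\mathbb{L}_{X^{N}(\cdot)})=\frac1N\sum_{j}|X^{j,N}(\cdot)|^{2}$ together with the elementary inequality $(\frac1N\sum_j a_j)^{\tilde p/2}\le\frac1N\sum_j a_j^{\tilde p/2}$ and the exchangeability of the particles, so that $\mathbb{E}\,\mathcal{W}_{2}^{\tilde p}(\mathbb{L}_{X^{N}(\cdot)})\le\mathbb{E}|X^{i,N}(\cdot)|^{\tilde p}$; this keeps all bounds uniform in $N$ and folds the empirical-measure contributions into the same moment being estimated. Collecting the drift bound \eqref{tedi2}, the diffusion growth bound \eqref{tuichu3}, the neutral bound \eqref{tuichu4} and the martingale estimate, and applying Gronwall's inequality, I arrive at the exact analogue of \eqref{*1}, namely $\mathbb{E}\sup_{0\le u\le t}|\bar{X}^{i,N}(u)|^{\tilde p}\le C+C\,\mathbb{E}\sup_{0\le u\le t}|\bar{X}^{i,N}(u-\rho)|^{(l_{U}+1)\tilde p}$, where the step from $U^{i}$ to $\bar{X}^{i,N}$ uses \eqref{tuichu4}.

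Finally I would close by the same interval-by-interval induction as in Theorem \ref{thm3.1}: on $[0,\rho]$ the delayed term is the H\"older-continuous initial datum $\xi^{i}$ and hence bounded, yielding the bound on $[0,\rho]$; defining the decreasing exponent sequence $\bar{p}_{j}$ as there and using $\bar p_{j+1}(l_U+1)<\bar p_j$ with H\"older's inequality propagates the bound across $[\rho,2\rho],[2\rho,3\rho],\dots$ up to $[0,T]$, with constants depending on $T,\rho,\tilde p$ but not on $N$ or $\Delta$. The main obstacle is the interaction of the second and third steps: reconciling the superlinear weights $U_{2},U_{3}$, which raise the delayed term to the power $(l_{U}+1)\tilde p$ and force the interval induction, with the frozen-coefficient discrepancy, while checking that the taming exponent $\gamma\le 1/2$ makes every correction term carry a nonnegative power of $\Delta$ and that all constants are genuinely independent of both $N$ and $\Delta$.
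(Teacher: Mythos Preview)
Your plan is essentially correct and follows the paper's line: It\^o's formula on the neutral-shifted variable, split the drift inner product into the frozen-coefficient discrepancy $(U^{i}(s)-U^{i}(s_{\Delta}))^{T}\alpha_{\Delta}$ controlled via the taming bound \eqref{tedi1} and the one-step increment, plus the dissipativity part controlled by \eqref{tedi2}, then close by the delay-interval induction of Theorem~\ref{thm3.1}.

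There is, however, one genuine difference worth noting. The lemma as stated asks only for $\sup_{0\le t\le T}\mathbb{E}|\bar{X}^{i,N}(t)|^{\tilde{p}}$, with the supremum \emph{outside} the expectation. The paper exploits this: at fixed~$t$ the martingale term $Q_{3}^{i,N}(t)$ has zero mean, so no BDG estimate is needed at all in this lemma, and the $\mathbb{E}\sup_{t}$ upgrade is deferred to the next lemma (Lemma~\ref{lem5.1}), which uses the pointwise bound just established to control the superlinear delayed moments directly. You instead aim straight for $\mathbb{E}\sup_{0\le u\le t}|\bar{X}^{i,N}(u)|^{\tilde{p}}$, which forces you to handle the martingale via BDG inside the delay induction. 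Your route works and effectively merges the paper's two lemmas into one argument; the paper's route is slightly cleaner because it separates the two difficulties (discrepancy/dissipativity on one hand, martingale supremum on the other).

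Two minor technical points: the paper starts from $\tilde{p}\ge 4$ rather than $\tilde{p}\ge 2$, so that BDG applied to $\bigl|\int_{s_{\Delta}}^{s}\beta\,dB^{i}\bigr|^{\tilde{p}/2}$ lands in the convex range; and the paper's delay-exponent blowup is $l_{U}^{*}=2l_{U}+2$ rather than your $l_{U}+1$, because it splits $U_{k}^{\tilde{p}}(x,0)|x|^{\tilde{p}}$ by a further Young inequality instead of using the polynomial bound \eqref{2.10} directly. Your exponent is in fact sharper and the induction closes either way.
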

\begin{proof}
	Let $\tilde{p}\geq 4$ first. Applying It\^{o}'s formula gives that
	\begin{equation*}
		\begin{split}
			&\left|\bar{X}^{i,N}(t)-D(\bar{X}^{i,N}(t-\rho))\right|^{\tilde{p}}-\left|\xi^{i}(0)-D(\xi^{i}(-\rho))\right|^{\tilde{p}}
			\\&\leq\tilde{p}\int_{0}^{t}\left|\bar{X}^{i,N}(s)-D(\bar{X}^{i,N}(s-\rho))\right|^{\tilde{p}-2}
			\left(\bar{X}^{i,N}(s)-D(\bar{X}^{i,N}(s-\rho))\right)^{T}\\&\quad \quad \quad\cdot\alpha_{\Delta}\left(\Gamma({X}^{i,N}_{s}),
			\mathbb{L}_{\Gamma({X}^{N}_{s})}\right)ds
			\\&+\frac{\tilde{p}(\tilde{p}-1)}{2}\int_{0}^{t}\left|\bar{X}^{i,N}(s)-D(\bar{X}^{i,N}(s-\rho))\right|^{\tilde{p}-2}
			\left|\beta\left(\Gamma({X}^{i,N}_{s}),
			\mathbb{L}_{\Gamma({X}^{N}_{s})}\right)\right|^{2}ds
			\\&+\tilde{p}\int_{0}^{t}|\bar{X}^{i,N}(s)-D(\bar{X}^{i,N}(s-\rho))|^{\tilde{p}-2}(\bar{X}^{i,N}(s)-D(\bar{X}^{i,N}(s-\rho)))^{T}
			\\&\quad \quad \quad\beta\left(\Gamma({X}^{i,N}_{s}),
			\mathbb{L}_{\Gamma({X}^{N}_{s})}\right)dB^{i}(s)
			\\&=:Q_{1}^{i,N}(t)+Q_{2}^{i,N}(t)+Q_{3}^{i,N}(t).
		\end{split}
	\end{equation*}
Moreover,
	\begin{equation*}
		\begin{split}
			&Q_{1}^{i,N}(t)\\&= C\int_{0}^{t}\left|\bar{X}^{i,N}(s)-D(\bar{X}^{i,N}(s-\rho))\right|^{\tilde{p}-2}
			\\& \cdot\left(\bar{X}^{i,N}(s)-D(\bar{X}^{i,N}(s-\rho))-X^{i,N}(s)+D(X^{i,N}(s-\rho))\right)^{T}
		\alpha_{\Delta}\left(\Gamma({X}^{i,N}_{s}),
			\mathbb{L}_{\Gamma({X}^{N}_{s})}\right)ds
			\\&+C\int_{0}^{t}\left|\bar{X}^{i,N}(s)-D(\bar{X}^{i,N}(s-\rho))\right|^{\tilde{p}-2}
			\\&\cdot \left(X^{i,N}(s)-D(X^{i,N}(s-\rho))\right)^{T}\alpha_{\Delta}\left(\Gamma(X^{i,N}_{s}),
			\mathbb{L}_{\Gamma(X^{N}_{s})}\right)ds.
			\\&=:Q_{11}^{i,N}(t)+Q_{12}^{i,N}(t).
		\end{split}
	\end{equation*}
	By (\ref{contin}), Young's inequality, H\"{o}lder's inequality and Assumptions \ref{Halpha}, \ref{Hbeta}, we have
	\begin{equation*}
		\begin{split}
			&\mathbb{E}Q_{11}^{i,N}(t)
			\\&\leq C\mathbb{E}\int_{0}^{t}\left|\bar{X}^{i,N}(s)-D(\bar{X}^{i,N}(s-\rho))\right|^{\tilde{p}-2}
			\left|\int_{s_{\Delta}}^{s}\alpha_{\Delta}\left(\Gamma(X^{i,N}_{u}),\mathbb{L}_{\Gamma(X^{N}_{u})}\right)du\right.
			\\&\quad \left.+\int_{s_{\Delta}}^{s}\beta\left(\Gamma(X^{i,N}_{u}),\mathbb{L}_{\Gamma(X^{N}_{u})}\right)dB^{i}(u)\right|
			\left|\alpha_{\Delta}\left(\Gamma(X^{i,N}_{s}),\mathbb{L}_{\Gamma(X^{N}_{s})}\right)\right|ds.
			\\&\leq C\mathbb{E}\int_{0}^{t}\left|\bar{X}^{i,N}(s)-D(\bar{X}^{i,N}(s-\rho))\right|^{\tilde{p}}ds
			\\&\quad +C\Delta^{-\gamma\tilde{p}/2}\mathbb{E}\int_{0}^{t}\left(\left|\int_{s_{\Delta}}^{s}\alpha_{\Delta}\left(\Gamma(X^{i,N}_{u}),
			\mathbb{L}_{\Gamma(X^{N}_{u})}\right)du\right|^{\tilde{p}/2}\right.
			\\&\quad\left.+\left|\int_{s_{\Delta}}^{s}\beta\left(\Gamma(X^{i,N}_{u}),\mathbb{L}_{\Gamma(X^{N}_{u})}\right)dB^{i}(u)\right|^{\tilde{p}/2} \right)ds
			\\&\leq C\mathbb{E}\int_{0}^{t}\left|\bar{X}^{i,N}(s)-D(\bar{X}^{i,N}(s-\rho))\right|^{\tilde{p}}ds
			\\&\quad +C\left(\Delta^{(1/2-\gamma)\tilde{p}}+\Delta^{-\gamma\tilde{p}/2}\mathbb{E}\int_{0}^{t}
			\left|\int_{s_{\Delta}}^{s}\beta\left(\Gamma(X^{i,N}_{u}),\mathbb{L}_{\Gamma(X^{N}_{u})}\right)dB^{i}(u)\right|^{\tilde{p}/2}ds\right)
			\\&\leq C\mathbb{E}\int_{0}^{t}\left|\bar{X}^{i,N}(s)-D(\bar{X}^{i,N}(s-\rho))\right|^{\tilde{p}}ds
			\\&\quad +C\Delta^{(1/2-\gamma)\tilde{p}}+C\Delta^{-\gamma\tilde{p}/2}\int_{0}^{t}
			\mathbb{E}\left(\int_{s_{\Delta}}^{s}\left|\beta\left(\Gamma(X^{i,N}_{u}),
			\mathbb{L}_{\Gamma(X^{N}_{u})}\right)\right|^{2}du\right)^{\tilde{p}/4}ds
			\\
				\end{split}
		\end{equation*}
			\begin{equation*}
				\begin{split}
			&\leq C\mathbb{E}\int_{0}^{t}\left|\bar{X}^{i,N}(s)-D(\bar{X}^{i,N}(s-\rho))\right|^{\tilde{p}}ds
			+C\Delta^{(1/2-\gamma)\tilde{p}}
			\\&\quad +C\Delta^{(1/2-\gamma)\tilde{p}/2}\int_{0}^{t}\mathbb{E}\left[1
			+\sum_{v=1}^{r-1}|X^{i,N}(s+\bar{s}_{v})|^{\tilde{p}/2}\right.
			\\&\quad\left.+U^{\tilde{p}/2}_{2}(X^{i,N}(s-\rho),0)|X^{i,N}(s-\rho)|^{\tilde{p}/2}
			+\sum_{i=1}^{r}\mathcal{W}_{2}^{\tilde{p}/2}(\mathbb{L}_{X^{N}(s+\bar{s}_{v})})\right]ds
			\\&\leq C\mathbb{E}\int_{0}^{t}\left|\bar{X}^{i,N}(s)-D(\bar{X}^{i,N}(s-\rho))\right|^{\tilde{p}}ds+C\left(1+\int_{0}^{t}\mathbb{E}\left[\sum_{v=1}^{r-1}|X^{i,N}(s+\bar{s}_{v})|^{\tilde{p}}\right.\right.
			\\&\quad\left.\left. +|X^{i,N}(s-\rho)|^{(l_U+1)\tilde{p}}
			+\sum_{i=1}^{r}\mathcal{W}_{\tilde{p}}^{\tilde{p}}(\mathbb{L}_{X^{N}(s+\bar{s}_{v})})\right]ds\right)
			\\&\leq C\mathbb{E}\int_{0}^{t}\left|\bar{X}^{i,N}(s)-D(\bar{X}^{i,N}(s-\rho))\right|^{\tilde{p}}ds
			+C\left(1+\int_{0}^{t}\sup_{0\leq u\leq s}\mathbb{E}\left|\bar{X}^{i,N}(u)\right|^{\tilde{p}}ds\right)
			\\&\quad +C\int_{0}^{t}\mathbb{E}|X^{i,N}(s-\rho)|^{(l_U+1)\tilde{p}}ds.
		\end{split}
	\end{equation*}
	By  H\"{o}lder's inequality, Young's inequality, (\ref{tedi2}) and Assumptions \ref{Halpha}-\ref{HD}, one can see that
	\begin{equation*}
		\begin{split}
			&\mathbb{E}\left(Q_{12}^{i,N}(t)+Q_{2}^{i,N}(t)\right)
			\\&\leq C\mathbb{E}\int_{0}^{t}\left|\bar{X}^{i,N}(s)-D(\bar{X}^{i,N}(s-\rho))\right|^{\tilde{p}-2}\left[1
			+\sum_{v=1}^{r-1}|X^{i,N}(s+\bar{s}_{v})|^2\right.
			\\&\quad\left.+U^{2}_{2}(X^{i,N}(s-\rho),0)|X^{i,N}(s-\rho)|^2
			+\sum_{i=1}^{r}\mathcal{W}_{2}^{2}(\mathbb{L}_{X^{N}(s+\bar{s}_{v})})\right]ds
			\\&\leq C\mathbb{E}\int_{0}^{t}\left[1+|X^{i,N}(s)|^{\tilde{p}}
			+U^{\tilde{p}}_{3}(\bar{X}^{i,N}(s-\rho),0)|\bar{X}^{i,N}(s-\rho)|^{\tilde{p}}\right.
			\\&\quad\left.+\sum_{v=1}^{r-1}|X^{i,N}(s+\bar{s}_{v})|^{\tilde{p}}\right]ds+U^{\tilde{p}}_{2}(X^{i,N}(s-\rho),0)|X^{i,N}(s-\rho)|^{\tilde{p}}
			\\&\quad\left.+\sum_{v=1}^{r}\mathcal{W}_{\tilde{p}}^{2}(\mathbb{L}_{X^{N}(s+\bar{s}_{v})})\right]ds
			\\&\leq C\mathbb{E}\int_{0}^{t}\left[1+|X^{i,N}(s)|^{\tilde{p}}
			+U^{2\tilde{p}}_{3}(\bar{X}^{i,N}(s-\rho),0)+|\bar{X}^{i,N}(s-\rho)|^{2\tilde{p}}\right.
			\\&\quad+\sum_{v=1}^{r-1}|X^{i,N}(s+\bar{s}_{v})|^{\tilde{p}}+U^{2\tilde{p}}_{2}(X^{i,N}(s-\rho),0)+|X^{i,N}(s-\rho)|^{2\tilde{p}}
			\\&\quad\left.+\sum_{v=1}^{r}\mathcal{W}_{\tilde{p}}^{2}(\mathbb{L}_{X^{N}(s+\bar{s}_{v})})\right]ds
			\\
		\end{split}
	\end{equation*}
			\begin{equation*}
				\begin{split}
			&\leq C\int_{0}^{t}\left[1+\sup_{0\leq u\leq s}\mathbb{E}\left|\bar{X}^{i,N}(u)\right|^{\tilde{p}}\right.
			\\&\quad\left.+|\bar{X}^{i,N}(s-\rho)|^{2\tilde{p}l_U}+|\bar{X}^{i,N}(s-\rho)|^{2\tilde{p}}
			+|X^{i,N}(s-\rho)|^{2\tilde{p}l_U}+|X^{i,N}(s-\rho)|^{2\tilde{p}}\right]
			\\&\leq C\int_{0}^{t}\left[1+\sup_{0\leq u\leq s}\mathbb{E}\left|\bar{X}^{i,N}(u)\right|^{\tilde{p}}\right]ds
			\\&\quad +C\int_{0}^{t}\mathbb{E}\left(|\bar{X}^{i,N}(s-\rho)|^{\tilde{p}l_{U}^{*}}+X^{i,N}(s-\rho)|^{\tilde{p}l_{U}^{*}}\right)ds.
		\end{split}
	\end{equation*}
	where $l_{U}^{*}=2l_{U}+2$.
	Combining these inequalities with the Gronwall inequality leads to
	\begin{equation*}
		\begin{split}
			&\sup_{0\leq s\leq t}\mathbb{E}\left|\bar{X}^{i,N}(s)-D(\bar{X}^{i,N}(s-\rho))\right|^{\tilde{p}}
			\\&\leq C\left(1+\int_{0}^{t}\sup_{0\leq u\leq s}\mathbb{E}|\bar{X}^{i,N}(u)|^{\tilde{p}}ds
		 +\int_{0}^{t}\mathbb{E}\left(|\bar{X}^{i,N}(s-\rho)|^{\tilde{p}l_{U}^{*}}+X^{i,N}(s-\rho)|^{\tilde{p}l_{U}^{*}}\right)ds\right).
		\end{split}
	\end{equation*}
	Thus, we can get immediately that
	\begin{equation*}
		\begin{split}
			&\sup_{0\leq s\leq t}\mathbb{E}|\bar{X}^{i,N}(s)|^{\tilde{p}}
			\\&\leq C\left(\sup_{0\leq s\leq t}\mathbb{E}\left|\bar{X}^{i,N}(s)-D(\bar{X}^{i,N}(s-\rho))\right|^{\tilde{p}}
			+\sup_{0\leq s\leq t}\mathbb{E}|D(\bar{X}^{i,N}(s-\rho))|^{\tilde{p}}\right)
			\\&\leq C\left(1+\int_{0}^{t}\sup_{0\leq u\leq s}\mathbb{E}|\bar{X}^{i,N}(u)|^{\tilde{p}}ds
			+\sup_{0\leq s\leq t}\mathbb{E}\left|\bar{X}^{i,N}(s-\rho)\right|^{\tilde{p}l_{U}^{*}}\right).
		\end{split}
	\end{equation*}
	Thanks to Gronwall's inequality, one can see that
	\begin{equation*}
		\sup_{0\leq s\leq t}\mathbb{E}|\bar{X}^{i,N}(s)|^{\tilde{p}}
		\leq C\left(1+\sup_{0\leq s\leq t}\mathbb{E}\left|\bar{X}^{i,N}(s-\rho)\right|^{\tilde{p}l_{U}^{*}}\right).
	\end{equation*}
	By recalling the proof of Theorem \ref{thm3.1}, we can construct a finite sequence $\{\tilde{p}_{1},\tilde{p}_{2},\ldots,\tilde{p}_{M_{T}+1}\}$ such that $\tilde{p}_{j+1}l_{U}^{*}<\tilde{p}_{j}$ and $\tilde{p}_{M_{T}+1}=\tilde{p}$ for $j=1,2,\ldots,M_{T}+1$. Then using the same technique means the desired result when $\tilde{p}\geq 4$. The case when $\tilde{p}\in  (0,4)$ follows from the H\"{o}lder inequality.
\end{proof}

\begin{lem}\label{lem5.1}
Let Assumptions \ref{Halpha}, \ref{Hbeta}, \ref{HD} hold. Then for any $\bar{p}\in\left[2,\frac{\tilde{p}}{2l_U+ 2}\right]$, we know that
\begin{equation*}
\max_{i\in \mathbb{N}_N}\sup_{0< \Delta\leq 1}\mathbb{E}\left(\sup_{0\leq t\leq T}|\bar{X}^{i,N}(t)|^{\bar{p}}\right)
\leq C, \quad \forall T>0.
\end{equation*}
\end{lem}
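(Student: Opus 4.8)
The plan is to run the same It\^{o}--Gronwall scheme as in Lemma \ref{xinxinlem5.1}, but to insert the Burkholder--Davis--Gundy (BDG) inequality at the moment the stochastic integral is estimated, so that the supremum in $t$ can be pulled \emph{inside} the expectation. The price for this is a drop in the available moment order: the coefficient bounds \eqref{tedi2}, \eqref{tuichu3}, \eqref{tuichu4} are superlinear in the delayed variables, and after Young's inequality the highest power of a delayed value that appears is $(2l_U+2)\bar p$. This is precisely why one requires $\bar p\le\tilde p/(2l_U+2)$, so that every such term is already controlled by the uniform-in-time moment bound of Lemma \ref{xinxinlem5.1}. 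Throughout I write $Z^{i,N}(t)=\bar X^{i,N}(t)-D(\bar X^{i,N}(t-\rho))$ and treat $\bar p\ge 2$; the range $[2,\tilde p/(2l_U+2)]$ is nonempty once $\tilde p$ is taken large, which is legitimate since Lemma \ref{xinxinlem5.1} holds for every $\tilde p>0$.

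First I would apply It\^{o}'s formula to $|Z^{i,N}(t)|^{\bar p}$, producing a drift term, an It\^{o}-correction term, and a martingale term $\int_0^{\cdot}|Z^{i,N}|^{\bar p-2}(Z^{i,N})^{T}\beta\,dB^{i}$. Taking $\sup_{0\le s\le t}$ and then $\mathbb{E}$, the drift and correction terms are ordinary time integrals, so the supremum is harmless and they are bounded via \eqref{tedi2}, \eqref{tuichu3}, H\"older's and Young's inequalities; splitting off $|Z^{i,N}(s)|^{\bar p}$ leaves inside $\int_0^{t}$ only delayed powers of the form $|\bar X^{i,N}(s-\rho)|^{(2l_U+2)\bar p}$ and $|X^{i,N}(s-\rho)|^{(2l_U+2)\bar p}$ together with the $\mathcal{W}_{\bar p}^{\bar p}$ terms, each $\le C$ by Lemma \ref{xinxinlem5.1}. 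For the martingale term I would use BDG to dominate its supremum by $C\,\mathbb{E}\big(\int_0^{t}|Z^{i,N}|^{2\bar p-2}|\beta|^{2}\,ds\big)^{1/2}$, then write $|Z^{i,N}|^{2\bar p-2}=|Z^{i,N}|^{\bar p}\,|Z^{i,N}|^{\bar p-2}$ and apply Young's inequality: the factor $\tfrac12\mathbb{E}\sup_{0\le s\le t}|Z^{i,N}(s)|^{\bar p}$ is absorbed into the left-hand side, while the remaining time integral is estimated exactly as the drift term. This produces $\mathbb{E}\sup_{0\le s\le t}|Z^{i,N}(s)|^{\bar p}\le C+C\int_0^{t}\mathbb{E}\sup_{0\le u\le s}|Z^{i,N}(u)|^{\bar p}\,ds$, and Gronwall's inequality yields $\mathbb{E}\sup_{0\le t\le T}|Z^{i,N}(t)|^{\bar p}\le C$.

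It remains to pass from $Z^{i,N}$ to $\bar X^{i,N}$ through $\bar X^{i,N}(t)=Z^{i,N}(t)+D(\bar X^{i,N}(t-\rho))$. Since $D$ is only superlinearly bounded by \eqref{tuichu4}, the term $\sup_{0\le s\le t}|D(\bar X^{i,N}(s-\rho))|^{\bar p}$ cannot be absorbed directly, and one is led to the recursion $\mathbb{E}\sup_{0\le s\le t}|\bar X^{i,N}(s)|^{\bar p}\le C+C\,\mathbb{E}\sup_{0\le s\le t}|\bar X^{i,N}(s-\rho)|^{(l_U+1)\bar p}$, which has exactly the form of \eqref{*1} in Theorem \ref{thm3.1}. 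I would therefore reproduce the interval-by-interval induction there: on $[0,\rho]$ the delayed argument is the bounded initial datum $\xi^{i}$, so the bound is immediate; on $[0,2\rho],[0,3\rho],\dots$ one controls the higher delayed moment by the bound already secured on the preceding interval via H\"older's inequality, stepping down a finite decreasing sequence of exponents as in Theorem \ref{thm3.1}, with the requisite higher-order moments of the delayed integral terms supplied by Lemma \ref{xinxinlem5.1}. After $\lfloor T/\rho\rfloor+1$ steps one reaches $\mathbb{E}\sup_{0\le t\le T}|\bar X^{i,N}(t)|^{\bar p}\le C$, uniformly in $i\in\mathbb{S}_N$ and $\Delta\in(0,1]$, since no constant depends on $N$ or $\Delta$.

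The step I expect to be most delicate is the martingale estimate: a naive bound on the supremum of $J_3$-type terms would reintroduce a supremum of the superlinear diffusion coefficient, which Lemma \ref{xinxinlem5.1} does not control. The resolution is the BDG-plus-Young maneuver above, which trades the supremum for an absorbable copy of $\tfrac12\mathbb{E}\sup|Z^{i,N}|^{\bar p}$ and a plain time integral of coefficient moments. This is exactly where the order is forced down from $\tilde p$ to $\bar p\le\tilde p/(2l_U+2)$, and where the fact that Lemma \ref{xinxinlem5.1} provides \emph{all} finite moments uniformly in time becomes indispensable, both for the integral terms and for feeding the neutral-term induction of the last paragraph.
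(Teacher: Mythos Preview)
Your proposal is correct and follows essentially the same route as the paper: apply It\^{o}'s formula to $|Z^{i,N}|^{\bar p}$, handle the martingale term by the BDG-plus-Young manoeuvre so that $\tfrac12\mathbb{E}\sup|Z^{i,N}|^{\bar p}$ is absorbed on the left, bound all remaining moment integrals by Lemma~\ref{xinxinlem5.1}, and then propagate from $Z^{i,N}$ to $\bar X^{i,N}$ via the interval-by-interval induction of Theorem~\ref{thm3.1}. The only cosmetic differences are that the paper extracts $\sup|Z^{i,N}|^{\bar p-1}$ before Young (you pull out $\sup|Z^{i,N}|^{\bar p/2}$), and the paper bounds $\mathbb{E}\sup|Z^{i,N}|^{\bar p}$ directly from Lemma~\ref{xinxinlem5.1} rather than via Gronwall; also note that after Young the time integral contains not only delayed powers but current-time terms $|X^{i,N}(s)|^{\bar p}$ and $\mathcal{W}_{\bar p}^{\bar p}(\mathbb{L}_{X^N(s)})$, though these are equally covered by Lemma~\ref{xinxinlem5.1}.
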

\begin{proof}
After analysis, we know that the key difference is to estimate\\ $\mathbb{E}\left(\sup_{0\leq s\leq t}Q_{3}^{i,N}(s)\right)$.
Using BDG's inequality,  Young’s inequality, H\"{o}lder’s inequality gives that
\begin{equation*}
\begin{split}
&\mathbb{E}\left(\sup_{0\leq s\leq t}Q_{3}^{i,N}(s)\right)
\\&\leq C\mathbb{E}\left[\int_{0}^{t}\left|\bar{X}^{i,N}(s)-D(\bar{X}^{i,N}(s-\rho))\right|^{2\bar{p}-2}\left|\beta\left(\Gamma(X^{i,N}_{s}),
\mathbb{L}_{\Gamma(X^{N}_{s})}\right)\right|^{2} ds\right]^{1/2}
\\&\leq C\mathbb{E}\left[\sup_{0\leq s\leq t}\left|\bar{X}^{i,N}(s)-D(\bar{X}^{i,N}(s-\rho))\right|^{\bar{p}-1}\right.
\\&\quad\quad \quad \cdot\left.\left(\int_{0}^{t}\left|\beta\left(\Gamma(X^{i,N}_{s}),
\mathbb{L}_{\Gamma(X^{N}_{s})}\right)\right|^{2} ds\right)^{1/2}\right]
\\&\leq \frac{1}{2}\mathbb{E}\left(\sup_{0\leq s\leq t}\left|\bar{X}^{i,N}(s)-D(\bar{X}^{i,N}(s-\rho))\right|^{\bar{p}}\right)
\\&+C\mathbb{E}\left[\int_{0}^{t}\left[1
+\sum_{v=1}^{r-1}|X^{i,N}(s+\bar{s}_{v})|^{2}\right.\right.
\\&\quad\left.\left.+U^{2}_{2}(X^{i,N}(s-\rho),0)|X^{i,N}(s-\rho)|^{2}
+\sum_{i=1}^{r}\mathcal{W}_{2}^{2}(\mathbb{L}_{X^{N}(s+\bar{s}_{v})})\right]ds\right]^{\bar{p}/2}
\\&\leq \frac{1}{2}\mathbb{E}\left(\sup_{0\leq s\leq t}\left|\bar{X}^{i,N}(s)-D(\bar{X}^{i,N}(s-\rho))\right|^{\bar{p}}\right)
\\&+C\int_{0}^{t}\left(1+\sup_{0\leq u\leq s}\mathbb{E}|\bar{X}^{i,N}(u)|^{\bar{p}}+\sup_{0\leq u\leq s}\mathbb{E}|\bar{X}^{i,N}(u)|^{(l_U+1)\bar{p}}\right)ds
\\&\leq \frac{1}{2}\mathbb{E}\left(\sup_{0\leq s\leq t}\left|\bar{X}^{i,N}(s)-D(\bar{X}^{i,N}(s-\rho))\right|^{\bar{p}}\right)
+C.
\end{split}
\end{equation*}
Then by the results in the proof of Lemma \ref{xinxinlem5.1}, we derive that
\begin{equation*}
\begin{split}
&\mathbb{E}\left(\sup_{0\leq s\leq t}\left|\bar{X}^{i,N}(s)-D(\bar{X}^{i,N}(s-\rho))\right|^{\bar{p}}\right)
\\&\leq C+C\int_{0}^{t}\sup_{0\leq u\leq s}\mathbb{E}|\bar{X}^{i,N}(u)|^{\bar{p}}ds
 +C\int_{0}^{t}\sup_{0\leq u\leq s}\mathbb{E}|\bar{X}^{i,N}(u-\rho)|^{\bar{p}l_{U}^{*}}ds
 \\&\leq C,
\end{split}
\end{equation*}
where $\bar{p}{l_U^*}\leq\tilde{p}$ has been used.
Thus, we can get immediately that
\begin{equation*}
\begin{split}
&\mathbb{E}\left(\sup_{0\leq s\leq t}|\bar{X}^{i,N}(s)|^{\bar{p}}\right)
\\&\leq C\mathbb{E}\left(\sup_{0\leq s\leq t}\left|\bar{X}^{i,N}(s)-D(\bar{X}^{i,N}(s-\rho))\right|^{\bar{p}}\right)
+C\mathbb{E}\left(\sup_{0\leq s\leq t}|D(\bar{X}^{i,N}(s-\rho))|^{\bar{p}}\right)
\\&\leq C
+C\mathbb{E}\left(\sup_{0\leq s\leq t}\left|\bar{X}^{i,N}(s-\rho)\right|^{\bar{p}l_{U}^{*}}\right).
\end{split}
\end{equation*}
Similar to the last part in the proof of Lemma \ref{xinxinlem5.1}, we get the desired result.
\end{proof}

\begin{lem}\label{lem5.2}
Let Assumptions \ref{Halpha}-\ref{HD} hold. Then for any $\hat{p}\in [2,\frac{\bar{p}}{2l_{U}+2}]$, it holds that
\begin{equation*}
\max_{i\in \mathbb{N}_N}\mathbb{E}\left(\sup_{0\leq k\leq M_{T}}\sup_{t_{k}\leq t\leq t_{k+1}}|\bar{X}^{i,N}(t)-X^{i,N}(t)|^{\hat{p}}\right)
\leq C\Delta^{\hat{p}/2}.
\end{equation*}
\end{lem}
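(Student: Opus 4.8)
The plan is to work on a single subinterval $[t_k,t_{k+1})$ for a fixed particle index $i$, exploit the fact that $X^{i,N}(t)=X^{i,N}(t_k)=\bar{X}^{i,N}(t_k)$ there, and separate the neutral part from the Itô part. Introducing $Z^{i}(t):=\bar{X}^{i,N}(t)-D(\bar{X}^{i,N}(t-\rho))$, I would read off from \eqref{contin} and the piecewise-constant nature of the coefficients on $[t_k,t_{k+1})$ the decomposition
\begin{equation*}
\bar{X}^{i,N}(t)-X^{i,N}(t)=\big(Z^{i}(t)-Z^{i}(t_k)\big)+\big(D(\bar{X}^{i,N}(t-\rho))-D(\bar{X}^{i,N}(t_{k}-\rho))\big),
\end{equation*}
where $Z^{i}(t)-Z^{i}(t_k)=\alpha_{\Delta}(\Gamma(X^{i,N}_{t_k}),\mathbb{L}_{\Gamma(X^{N}_{t_k})})(t-t_k)+\beta(\Gamma(X^{i,N}_{t_k}),\mathbb{L}_{\Gamma(X^{N}_{t_k})})(B^{i}(t)-B^{i}(t_k))$. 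I would then bound the three contributions after taking $\sup_{t\in[t_k,t_{k+1})}$, then $\max_k$, then $\mathbb{E}$.

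The drift increment is the easy one: by \eqref{tedi1} one has $|\alpha_{\Delta}|\le\Delta^{-\gamma}$, so its size is at most $\Delta^{1-\gamma}\le\Delta^{1/2}$ because $\gamma\le1/2$ and $\Delta\le1$, and this passes through the $\hat p$-th power and expectation trivially. For the diffusion increment I would apply the BDG inequality to the martingale $\int_{t_k}^{t}\beta\,dB^{i}$, then the growth bound \eqref{tuichu3} on $|\beta|^2$, and finally the moment estimate of Lemma \ref{lem5.1}; the moments invoked are of order $(l_U+1)\hat{p}$, which is admissible precisely because $\hat{p}\le\bar{p}/(2l_U+2)$ forces $(l_U+1)\hat{p}\le\bar{p}/2$. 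This produces a contribution of order $\Delta^{\hat{p}/2}$.

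The neutral term is where I expect the real difficulty. Since $t_k-\rho=t_{k-M}$ and $\bar{X}^{i,N}(t_{k-M})=X^{i,N}(t-\rho)$ for $t-\rho\in[t_{k-M},t_{k-M+1})$, Assumption \ref{HD} gives
\begin{equation*}
|D(\bar{X}^{i,N}(t-\rho))-D(\bar{X}^{i,N}(t_{k-M}))|\le K_3\,U_3(\bar{X}^{i,N}(t-\rho),X^{i,N}(t-\rho))\,|\bar{X}^{i,N}(t-\rho)-X^{i,N}(t-\rho)|,
\end{equation*}
so the neutral increment reproduces, one delay $\rho$ earlier, exactly the quantity to be estimated, multiplied by the factor $U_3$. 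For $k<M$ the shifted time lies in $[-\rho,0)$, where $\bar{X}^{i,N}$ coincides with $\xi^{i}$; the assumed Hölder-$\tfrac12$ continuity of $\xi^{i}$ bounds that increment by $\Delta^{1/2}$, and since $\xi^{i}$ is bounded the factor $U_3$ only contributes a constant, so the block $[0,\rho)$ already satisfies the desired $\Delta^{\hat p/2}$ bound. For $k\ge M$ I would run an induction over the delay-blocks $[n\rho,(n+1)\rho)$, $n=0,\dots,\lfloor T/\rho\rfloor$, in the same spirit as the proofs of Theorem \ref{thm3.1} and Lemma \ref{lem5.1}: take $\mathbb{E}(\sup)$, use Hölder's inequality to peel the factor $U_3$ (whose moments are controlled by Lemma \ref{lem5.1}) off the recursively appearing increment, and feed in the block-$(n-1)$ estimate for the latter. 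The delicate point is the exponent bookkeeping, since each Hölder splitting forces the preceding block to be controlled at a slightly higher power; one must therefore run the induction with a decreasing sequence of exponents terminating at $\hat{p}$ and choose the Hölder conjugate close to $1$, so that across the finitely many blocks every moment called upon remains below $\bar{p}$. This is exactly the margin that the hypothesis $\hat{p}\le\bar{p}/(2l_U+2)$ supplies. Summing the drift, diffusion and neutral contributions and taking $\max_k$ then yields the bound $C\Delta^{\hat p/2}$.
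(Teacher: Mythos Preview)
Your proposal is correct and follows essentially the same route as the paper: the identical decomposition into the It\^o increment $\psi(t)=Z^i(t)-Z^i(t_k)$ plus the neutral increment $D(\bar X^{i,N}(t-\rho))-D(X^{i,N}(t-\rho))$, the same bound on $\psi$ via \eqref{tedi1}, BDG, \eqref{tuichu3} and Lemma~\ref{lem5.1}, and the same induction over delay blocks with a decreasing sequence of exponents terminating at $\hat p$. The only cosmetic difference is that the paper fixes the H\"older split at Cauchy--Schwarz, so the exponent doubles at each block (with the explicit sequence $\hat p_j=(M_T+2-j)\hat p\,2^{M_T+1-j}$), rather than using a conjugate close to $1$; both choices go through since moments of all orders are available from Lemmas~\ref{xinxinlem5.1} and~\ref{lem5.1}.
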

\begin{proof}
By (\ref{contin}), for any $t\in[t_{k},t_{k+1})$, we have
\begin{equation*}
\bar{X}^{i,N}(t)-X^{i,N}(t)=D(\bar{X}^{i,N}(t-\rho))-D(X^{i,N}(t-\rho))+\psi(t),
\end{equation*}
where 
\begin{equation*}
	\psi(t):=\int_{t_{k}}^{t}\alpha_{\Delta}\left(\Gamma(X^{i,N}_{s}),\mathbb{L}_{\Gamma(X^{N}_{s})}\right)ds
+\int_{t_{k}}^{t}\beta\left(\Gamma(X^{i,N}_{s}),\mathbb{L}_{\Gamma(X^{N}_{s})}\right)dB^{i}(s).
\end{equation*}
By Assumption \ref{Hbeta}, Lemma \ref{lem5.1} and (\ref{tedi1}), we derive that
\begin{equation*}
\begin{split}
&\mathbb{E}\left(\sup_{t_{k}\leq t\leq t_{k+1}}|\psi(t)|^{\hat{p}}\right)
\\&\leq 2^{\hat{p}-1}\mathbb{E}\left(\sup_{t_{k}\leq t\leq t_{k+1}}\left|\int_{t_{k}}^{t}\alpha_{\Delta}\left(\Gamma(X^{i,N}_{s}),\mathbb{L}_{\Gamma(X^{N}_{s})}\right)ds\right|^{\hat{p}}\right)
\\&\quad +2^{\hat{p}-1}\mathbb{E}\left(\sup_{t_{k}\leq t\leq t_{k+1}}\left|\int_{t_{k}}^{t}\beta\left(\Gamma(X^{i,N}_{s}),\mathbb{L}_{\Gamma(X^{N}_{s})}\right)dB^{i}(s)\right|^{\hat{p}}\right)
\\&\leq C\Delta^{(1-\gamma)\hat{p}}+C\mathbb{E}\left[\sup_{t_{k}\leq t\leq t_{k+1}}\left|\beta\left(\Gamma(X^{i,N}_{t_{k}}),\mathbb{L}_{\Gamma(X^{N}_{t_{k}})}\right)\left(B^{i}(t)-B^{i}(t_{k})\right)\right|^{\hat{p}}\right]
\\&\leq C\Delta^{(1-\gamma)\hat{p}}+C\Delta^{\hat{p}/2}\mathbb{E}\left(1
+\sum_{v=1}^{r-1}|X^{i,N}(t_{k-\bar{k}_{v}})|^{\hat{p}}\right.\\
&\quad\left.+U^{\hat{p}}_{2}(X^{i,N}(t_{k-M}),0)|X^{i,N}(t_{k-M})|^{\hat{p}}+\sum_{v=1}^{r}\mathcal{W}^{\hat{p}}_{2}(\mathbb{L}_{X^{N}(t_{k-\bar{k}_{v}})})\right)
\\&\leq C\Delta^{\hat{p}/2}.
\end{split}
\end{equation*}
Thus, using Assumption \ref{HD} means that
\begin{equation*}\label{5.20}
\begin{split}
&\mathbb{E}\left(\sup_{t_{k}\leq t\leq t_{k+1}}|\bar{X}^{i,N}(t)-X^{i,N}(t)|^{\hat{p}}\right)
\\&\leq C\mathbb{E}\left(\sup_{t_{k}\leq t\leq t_{k+1}}|D(\bar{X}^{i,N}(t-\rho))-D(X^{i,N}(t-\rho))|^{\hat{p}}\right)
+C\mathbb{E}\left(\sup_{t_{k}\leq t\leq t_{k+1}}|\psi(t)|^{\hat{p}}\right)
\\&\leq C\left[\mathbb{E}\left(\sup_{t_{k}\leq t\leq t_{k+1}}U^{2\hat{p}}_{3}(\bar{X}^{i,N}(t-\rho),X^{i,N}(t-\rho))\right)\right]^{1/2}
\\&\quad\quad \quad  \left[\mathbb{E}\left(\sup_{t_{k}\leq t\leq t_{k+1}}|\bar{X}^{i,N}(t-\rho)-X^{i,N}(t-\rho)|^{2\hat{p}}\right)\right]^{1/2}+C\Delta^{\hat{p}/2}
\\
\end{split}
\end{equation*}
\begin{equation}\label{5.20}
\begin{split}
&\leq C\Delta^{\hat{p}/2}+C\left[\mathbb{E}\left(\sup_{t_{k}\leq t\leq t_{k+1}}|\bar{X}^{i,N}(t-\rho)-X^{i,N}(t-\rho)|^{2\hat{p}}\right)\right]^{1/2}.
\end{split}
\end{equation}
Define 
\begin{equation*}
\hat{p}_{j}=(M_T+2-j)\hat{p}2^{M_T+1-j},~~~j=1,2,\ldots,M_{T}+1.
\end{equation*}
One can observe that 
\begin{equation*}
2\hat{p}_{j+1}<\hat{p}_{j}~~~\text{and} ~~~\hat{p}_{M_{T}+1}=\hat{p},~~~j=1,2,\ldots,M_{T}+1.
\end{equation*}
We only discuss the case when $T>\rho$ (i.e., $M_{T}>M$), otherwise, the result is immediately obtained.
For $0\leq k\leq M-1$, \eqref{5.20} leads to
\begin{equation}\label{5.21}
\mathbb{E}\left(\sup_{t_{k}\leq t\leq t_{k+1}}|\bar{X}^{i,N}(t)-X^{i,N}(t)|^{\hat{p}_{1}}\right)\leq C\Delta^{\hat{p}_{1}/2}.
\end{equation}
For $M\leq k\leq 2M-1$, combining \eqref{5.20} and \eqref{5.21} with the H\"{o}lder inequality gives that
\begin{equation*}
\begin{split}
&\mathbb{E}\left(\sup_{t_{k}\leq t\leq t_{k+1}}|\bar{X}^{i,N}(t)-X^{i,N}(t)|^{\hat{p}_{2}}\right)
\\&\leq C\Delta^{\hat{p}_{2}/2}+C\left[\mathbb{E}\left(\sup_{t_{k}\leq t\leq t_{k+1}}|\bar{X}^{i,N}(t-\rho)-X^{i,N}(t-\rho)|^{\hat{p}_{1}}\right)\right]^{\hat{p}_{2}/\hat{p}_{1}}
\\&\leq C\Delta^{\hat{p}_{2}/2}.
\end{split}
\end{equation*}
The desired result is established by induction.
\end{proof}

\begin{thm}\label{thm5.3}
Let Assumptions \ref{Halpha}-\ref{HD} hold. Then, for any $p\in \left[2, \bar{p}\left(\frac{\hat{p}}{\bar{p}+\hat{p}l_{1}}\wedge\frac{1}{2(l_{U}+1)}\right)\right]$ and $4l_{U}+4\leq (2l_{U}+2)\hat{p}\leq\bar{p}$, we have
\begin{equation}\label{5.51}
\max_{i\in \mathbb{N}_N}\mathbb{E}\left(\sup_{0\leq t\leq T}|Y^{i,N}(t)-\bar{X}^{i,N}(t)|^{p}\right)
\leq C\Delta^{\gamma p}, \quad \forall T>0,
\end{equation}
and
\begin{equation}\label{5.52}
\max_{i\in \mathbb{N}_N}\mathbb{E}\left(\sup_{0\leq t\leq T}|Y^{i,N}(t)-X^{i,N}(t)|^{p}\right)
\leq C\Delta^{\gamma p}, \quad \forall T>0.
\end{equation}
\end{thm}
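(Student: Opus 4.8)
The plan is to mirror the structure of the proofs of Theorems \ref{thm3.1} and \ref{thm4.1}, working with the neutral difference process and deriving an interval-by-interval recursion in the delay. For fixed $i\in\mathbb{S}_N$ I would set $\Phi^{i,N}(t)=[Y^{i,N}(t)-D(Y^{i,N}(t-\rho))]-[\bar{X}^{i,N}(t)-D(\bar{X}^{i,N}(t-\rho))]$, which by (\ref{4.2}) and (\ref{contin}) is an It\^o process whose drift is $\alpha(\Gamma(Y^{i,N}_s),\mathbb{L}_{\Gamma(Y^N_s)})-\alpha_{\Delta}(\Gamma(X^{i,N}_s),\mathbb{L}_{\Gamma(X^N_s)})$ and whose diffusion is $\beta(\Gamma(Y^{i,N}_s),\mathbb{L}_{\Gamma(Y^N_s)})-\beta(\Gamma(X^{i,N}_s),\mathbb{L}_{\Gamma(X^N_s)})$. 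Applying It\^o's formula to $|\Phi^{i,N}(t)|^p$ then produces a drift term, a quadratic-variation term and a martingale term, exactly as in the displays $J_1^i,J_2^i,J_3^i$ of Theorem \ref{thm4.1}.

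The crux is the drift increment, which I would split by adding and subtracting $\alpha(\Gamma(X^{i,N}_s),\mathbb{L}_{\Gamma(X^N_s)})$ into an \emph{exact-coefficient} part $\alpha(\Gamma(Y^{i,N}_s),\cdots)-\alpha(\Gamma(X^{i,N}_s),\cdots)$ and a \emph{taming} part $\alpha(\Gamma(X^{i,N}_s),\cdots)-\alpha_{\Delta}(\Gamma(X^{i,N}_s),\cdots)$. For the exact-coefficient part I would invoke the monotonicity inequality (third estimate in Assumption \ref{Halpha}); since its natural pairing vector is $[Y^{i,N}(s)-D(Y^{i,N}(s-\rho))]-[X^{i,N}(s)-D(X^{i,N}(s-\rho))]$ rather than $\Phi^{i,N}(s)$, I would absorb the mismatch using the step-versus-continuous bound $\mathbb{E}\sup|\bar{X}^{i,N}(t)-X^{i,N}(t)|^{\hat p}\le C\Delta^{\hat p/2}$ of Lemma \ref{lem5.2} together with the growth of $U_1,U_2$; the H\"older exponents needed here, balancing the remainder of order $\Delta^{1/2}$ against the superlinear factor $U_1\sim|\cdot|^{l_1}$ whose moments cost $\bar p$, are what force the constraint $p\le\bar{p}\,\hat p/(\bar p+\hat p l_1)$. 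For the taming part I would use the elementary identity $|\alpha-\alpha_{\Delta}|\le\Delta^{\gamma}|\alpha|^2$ following from (\ref{5.1}), and then Young's inequality to bound $p|\Phi^{i,N}|^{p-1}\Delta^{\gamma}|\alpha|^2$ by $\varepsilon|\Phi^{i,N}|^p+C\Delta^{\gamma p}|\alpha|^{2p}$. The growth bound (\ref{tuichu1}) turns $\mathbb{E}|\alpha|^{2p}$ into moments of the numerical solution of order at most $2p(l_U+1)$, which are finite and uniformly bounded by Lemmas \ref{xinxinlem5.1}--\ref{lem5.1} precisely when $p\le\bar p/(2(l_U+1))$ (the moments of $Y^{i,N}$ come from Theorem \ref{thm3.1}, as $Y^{i,N}\stackrel{d}{=}Y$). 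This is the step that manufactures the claimed rate $\Delta^{\gamma p}$.

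The diffusion and martingale terms I would treat with Assumption \ref{Hbeta}, the Burkholder--Davis--Gundy and Young inequalities as in Theorem \ref{thm4.1}, producing only $|\Phi^{i,N}|^p$ contributions, delay differences $|Y^{i,N}(s-\rho)-X^{i,N}(s-\rho)|^p$, and empirical Wasserstein terms; after passing from $\mathbb{L}_{\Gamma(X^N_s)}$ to the particle values these last are controlled by the error itself, as in the $\mathbb{W}_p^p$ estimates of Theorem \ref{thm4.1}. I would then recover $|Y^{i,N}(t)-\bar{X}^{i,N}(t)|^p$ from $|\Phi^{i,N}(t)|^p$ via Assumption \ref{HD} (the growth of $U_3$ in the neutral term) and run the interval-by-interval induction on $[0,\rho],[\rho,2\rho],\dots,[0,(\lfloor T/\rho\rfloor+1)\rho]$: on $[0,\rho]$ the delayed arguments are the common initial data $\xi^i$, so those differences vanish and a Gronwall argument closes the estimate, each subsequent interval feeding on the previous one. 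Finally (\ref{5.52}) follows from (\ref{5.51}) by the triangle inequality and Lemma \ref{lem5.2}, since the admissible range of $p$ forces $\hat p\ge 2\gamma p$ and hence $\Delta^{\hat p/2}\le\Delta^{\gamma p}$ for $\Delta\in(0,1)$.

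The hard part, I expect, will be the bookkeeping of moment orders: every use of Young's or H\"older's inequality on the superlinear factors $U_1,U_2,U_3$ and on the taming remainder raises the required integrability, and one must check that the two displayed constraints on $p$, together with $4l_U+4\le(2l_U+2)\hat p\le\bar p$, simultaneously guarantee all the moment bounds supplied by Lemmas \ref{xinxinlem5.1}--\ref{lem5.2}. The second genuine difficulty is the step-versus-continuous mismatch in the monotonicity pairing, where the neutral term and the delay must be reconciled without degrading the $\Delta^{\gamma p}$ order.
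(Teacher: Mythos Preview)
Your overall strategy matches the paper's: apply It\^o's formula to the neutral difference $\Phi^{i,N}$ (the paper writes $\Theta^{i,N}$), split the drift, bound the taming remainder via $|\alpha-\alpha_\Delta|\le\Delta^\gamma|\alpha|^2$ to produce the $\Delta^{\gamma p}$ rate, handle the martingale by BDG and Young, recover $|Y^{i,N}-\bar X^{i,N}|^p$ from $|\Phi^{i,N}|^p$ through Assumption~\ref{HD}, and close by Gronwall plus a delay-interval induction with a doubling sequence of exponents exactly as in Lemma~\ref{lem5.2}. Your reading of the two moment constraints is also correct.

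The one substantive difference is the drift decomposition. You split two ways, $[\alpha(Y)-\alpha(X)]+[\alpha(X)-\alpha_\Delta(X)]$, and then must repair the pairing mismatch because the monotonicity inequality for $\alpha(Y)-\alpha(X)$ wants the vector $[Y-D(Y(\cdot-\rho))]-[X-D(X(\cdot-\rho))]$ rather than $\Phi^{i,N}$. The paper instead splits four ways, inserting $\alpha(\Gamma(\bar X^{i,N}_s),\mathbb L_{\Gamma(\bar X^N_s)})$ and $\alpha(\Gamma(\bar X^{i,N}_s),\mathbb L_{\Gamma(X^N_s)})$ as intermediate points. The gain is that the first piece, $\alpha(\Gamma(Y^{i,N}_s),\mathbb L_{\Gamma(Y^N_s)})-\alpha(\Gamma(\bar X^{i,N}_s),\mathbb L_{\Gamma(\bar X^N_s)})$, pairs \emph{exactly} with $\Phi^{i,N}(s)$ in the monotonicity inequality, so no mismatch arises; the step-versus-continuous discrepancy is isolated in two further pieces (measure slot and state slot) that are controlled directly by the \emph{Lipschitz} parts of Assumptions~\ref{Halpha}--\ref{Hbeta} and Lemma~\ref{lem5.2}. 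The constraint $p\le\bar p\,\hat p/(\bar p+\hat p\,l_1)$ then falls out cleanly from the state-slot piece via H\"older on $U_1^{\,p}\,|\bar X^{i,N}-X^{i,N}|^p$. Your route can be pushed through, but the cross term $|\Phi^{i,N}|^{p-2}\cdot|\text{mismatch}|\cdot|\alpha(Y)-\alpha(X)|$ reintroduces the unknown error $|Y^{i,N}-X^{i,N}|$ inside $|\alpha(Y)-\alpha(X)|$, which makes the Young/H\"older bookkeeping noticeably heavier; adopting the paper's four-way split removes that complication entirely.
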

\begin{proof}
For any $i\in \mathbb{S}_N$ and $t\in [0,T]$, set
$$\Theta^{i,N}(t)=Y^{i,N}(t)-\bar{X}^{i,N}(t)-D(Y^{i,N}(t-\rho))+D(\bar{X}^{i,N}(t-\rho)).$$
Application of It\^{o}’s formula yields that
\begin{equation*}
\begin{split}
&|\Theta^{i,N}(t)|^{p}\\&\leq p\int_{0}^{t}|\Theta^{i,N}(s)|^{p-2}(\Theta^{i,N}(s))^{T}
\left[\alpha\left(\Gamma(Y^{i,N}_{s}),\mathbb{L}_{\Gamma(Y^{N}_{s})}\right)
-\alpha_{\Delta}\left(\Gamma(X^{i,N}_{s}),\mathbb{L}_{\Gamma(X^{N}_{s})}\right)\right]ds
\\&+\frac{p(p-1)}{2}\int_{0}^{t}|\Theta^{i,N}(s)|^{p-2}\left|\beta\left(\Gamma(Y^{i,N}_{s}),\mathbb{L}_{\Gamma(Y^{N}_{s})}\right)
-\beta\left(\Gamma(X^{i,N}_{s}),\mathbb{L}_{\Gamma(X^{N}_{s})}\right)\right|^{2}ds
\\&+p\int_{0}^{t}|\Theta^{i,N}(s)|^{p-2}(\Theta^{i,N}(s))^{T}
\\&\quad \quad \quad\cdot\left[\beta\left(\Gamma(Y^{i,N}_{s}),\mathbb{L}_{\Gamma(Y^{N}_{s})}\right)
-\beta\left(\Gamma(X^{i,N}_{s}),\mathbb{L}_{\Gamma(X^{N}_{s})}\right)\right]dB^{i}(s)
\\&\leq p\int_{0}^{t}|\Theta^{i,N}(s)|^{p-2}(\Theta^{i,N}(s))^{T}
\left[\alpha\left(\Gamma(Y^{i,N}_{s}),\mathbb{L}_{\Gamma(Y^{N}_{s})}\right)
-\alpha\left(\Gamma(\bar{X}^{i,N}_{s}),\mathbb{L}_{\Gamma(\bar{X}^{N}_{s})}\right)\right]ds
\\&+ p\int_{0}^{t}|\Theta^{i,N}(s)|^{p-2}(\Theta^{i,N}(s))^{T}
\left[\alpha\left(\Gamma(\bar{X}^{i,N}_{s}),\mathbb{L}_{\Gamma(\bar{X}^{N}_{s})}\right)
-\alpha\left(\Gamma(\bar{X}^{i,N}_{s}),\mathbb{L}_{\Gamma(X^{N}_{s})}\right)\right]ds
\\&+p\int_{0}^{t}|\Theta^{i,N}(s)|^{p-2}(\Theta^{i,N}(s))^{T}
\left[\alpha\left(\Gamma(\bar{X}^{i,N}_{s}),\mathbb{L}_{\Gamma(X^{N}_{s})}\right)
-\alpha\left(\Gamma(X^{i,N}_{s}),\mathbb{L}_{\Gamma(X^{N}_{s})}\right)\right]ds
\\&+p\int_{0}^{t}|\Theta^{i,N}(s)|^{p-2}(\Theta^{i,N}(s))^{T}
\cdot\left[\alpha\left(\Gamma(X^{i,N}_{s}),\mathbb{L}_{\Gamma(X^{N}_{s})}\right)
-\alpha_{\Delta}\left(\Gamma(X^{i,N}_{s}),\mathbb{L}_{\Gamma(X^{N}_{s})}\right)\right]ds
\\&+\frac{p(p-1)}{2}\int_{0}^{t}|\Theta^{i,N}(s)|^{p-2}\left|\beta\left(\Gamma(Y^{i,N}_{s}),\mathbb{L}_{\Gamma(Y^{N}_{s})}\right)
-\beta\left(\Gamma(X^{i,N}_{s}),\mathbb{L}_{\Gamma(X^{N}_{s})}\right)\right|^{2}ds
\\&+p\int_{0}^{t}|\Theta^{i,N}(s)|^{p-2}(\Theta^{i,N}(s))^{T}
\\&\quad \quad \quad\cdot\left[\beta\left(\Gamma(Y^{i,N}_{s}),\mathbb{L}_{\Gamma(Y^{N}_{s})}\right)
-\beta\left(\Gamma(X^{i,N}_{s}),\mathbb{L}_{\Gamma(X^{N}_{s})}\right)\right]dB^{i}(s)
\\&=:A^{i,N}_{1}(t)+A^{i,N}_{2}(t)+A^{i,N}_{3}(t)+A^{i,N}_{4}(t)+A^{i,N}_{5}(t)+A^{i,N}_{6}(t).
\end{split}
\end{equation*}
Using  Lemma \ref{lem5.1}, Young's inequality and Assumption \ref{Halpha}, we have
\begin{equation*}
\begin{split}
&\mathbb{E}\left(\sup_{0\leq s\leq t}A^{i,N}_{1}(s)\right)
\\&\leq C\mathbb{E}\int_{0}^{t}\left(\left|Y^{i,N}(s)-\bar{X}^{i,N}(s)\right|^{p-2}
 +\left|D(Y^{i,N}(s-\rho))-D(\bar{X}^{i,N}(s-\rho))\right|^{p-2}\right)
\\&\quad \quad \quad \cdot\left[\sum_{v=1}^{r-1}|Y^{i,N}(s+\bar{s}_{v})-\bar{X}^{i,N}(s+\bar{s}_{v})|^{2}\right.
\\&\quad\quad \quad +U_{2}^{2}(Y^{i,N}(s-\rho),\bar{X}^{i,N}(s-\rho))|Y^{i,N}(s-\rho)-\bar{X}^{i,N}(s-\rho)|^{2}
\\&\quad \quad\quad\left. +\sum_{v=1}^{r}\mathbb{W}_{2}^{2}(\mathbb{L}_{Y^{N}(s+\bar{s}_{v})},\mathbb{L}_{\bar{X}^{N}(s+\bar{s}_{v})})\right]ds
\\
\end{split}
\end{equation*}
\begin{equation*}
\begin{split}
&\leq C\mathbb{E}\int_{0}^{t}\left[\sum_{v=1}^{r-1}|Y^{i,N}(s+\bar{s}_{v})-\bar{X}^{i,N}(s+\bar{s}_{v})|^{p}\right.
\\&\quad +\left[U_{2}(Y^{i,N}(s-\rho),\bar{X}^{i,N}(s-\rho))\vee U_{3}(Y^{i,N}(s-\rho),\bar{X}^{i,N}(s-\rho))\right]^{p}
\\&\quad \quad \quad\cdot|Y^{i,N}(s-\rho)-\bar{X}^{i,N}(s-\rho)|^{p}
\left. +\sum_{v=1}^{r}\mathbb{W}_{2}^{2}(\mathbb{L}_{Y^{N}(s+\bar{s}_{v})},\mathbb{L}_{\bar{X}^{N}(s+\bar{s}_{v})})\right]ds
\\&\leq C\int_{0}^{t}\mathbb{E}\left(\sup_{0\leq u\leq s}|Y^{i,N}(u)-\bar{X}^{i,N}(u)|^{p}\right)ds
\\&\quad +C\int_{0}^{t}\left(\mathbb{E}\left|Y^{i,N}(s-\rho)-\bar{X}^{i,N}(s-\rho)\right|^{2p}\right)^{1/2}ds
\\&\quad +C\mathbb{E}\int_{0}^{t}\sum_{v=1}^{r}\frac{1}{N}\sum_{j=1}^{N}\left|Y^{j,N}(s+\bar{s}_{v})
-\bar{X}^{j,N}(s+\bar{s}_{v})\right|^{p}ds
\\&\leq C\int_{0}^{t}\mathbb{E}\left(\sup_{0\leq u\leq s}|Y^{i,N}(u)-\bar{X}^{i,N}(u)|^{p}\right)ds
\\&\quad +C\int_{0}^{t}\left(\mathbb{E}\left|Y^{i,N}(s-\rho)-\bar{X}^{i,N}(s-\rho)\right|^{2p}\right)^{1/2}ds.
\end{split}
\end{equation*}
Applying Lemma \ref{lem5.2} and Assumption \ref{Halpha} gives that
\begin{equation*}
	\begin{split}
&\mathbb{E}\left(\sup_{0\leq s\leq t}A^{i,N}_{2}(s)\right)
\\&\leq C\mathbb{E}\int_{0}^{t}\left|\Theta^{i,N}(s)\right|^{p-1}
\sum_{v=1}^{r}\mathbb{W}_{2}(\mathbb{L}_{\bar{X}^{N}(s+\bar{s}_{v})},\mathbb{L}_{X^{N}(s+\bar{s}_{v})})ds
\\&\leq C\mathbb{E}\left[\sup_{0\leq s\leq t}\left|\Theta^{i,N}(s)\right|^{p-1}\int_{0}^{t}
\sum_{v=1}^{r}\mathbb{W}_{2}(\mathbb{L}_{\bar{X}^{N}(s+\bar{s}_{v})},\mathbb{L}_{X^{N}(s+\bar{s}_{v})})ds\right]
\\&\leq \frac{1}{5}\mathbb{E}\left(\sup_{0\leq s\leq t}\left|\Theta^{i,N}(s)\right|^{p}\right)
+C\mathbb{E}\int_{0}^{t}
\sum_{v=1}^{r}\frac{1}{N}\sum_{j=1}^{N}\left|\bar{X}^{j,N}(s+\bar{s}_{v})
-X^{j,N}(s+\bar{s}_{v})\right|^{p}ds
\\&\leq \frac{1}{5}\mathbb{E}\left(\sup_{0\leq s\leq t}\left|\Theta^{i,N}(s)\right|^{p}\right)+C\Delta^{\frac{p}{2}}.
\end{split}
\end{equation*}
By Lemma \ref{lem5.1}, H\"{o}lder's inequality, Young's inequality and Assumptions \ref{Halpha}, \ref{initial}, we derive that
\begin{equation*}
\begin{split}
&\mathbb{E}\left(\sup_{0\leq s\leq t}A^{i,N}_{3}(s)\right)
\\&\leq C\mathbb{E}\int_{0}^{t}\left|\Theta^{i,N}(s)\right|^{p-1}
\\&\quad \quad \quad\cdot\sum_{v=1}^{r}\left[U_{1}(\bar{X}^{i,N}(s+\bar{s}_{v}),X^{i,N}(s+\bar{s}_{v}))
|\bar{X}^{i,N}(s+\bar{s}_{v})-X^{i,N}(s+\bar{s}_{v})|\right]ds
\\
\end{split}
\end{equation*}
\begin{equation*}
\begin{split}
&\leq C\mathbb{E}\left[\sup_{0\leq s\leq t}\left|\Theta^{i,N}(s)\right|^{p-1}\right.
\\&\quad \quad \quad\left.\cdot\int_{0}^{t}\sum_{v=1}^{r}\left[U_{1}(\bar{X}^{i,N}(s+\bar{s}_{v}),X^{i,N}(s+\bar{s}_{v}))
|\bar{X}^{i,N}(s+\bar{s}_{v})-X^{i,N}(s+\bar{s}_{v})|\right]ds\right]
\\&\leq \frac{1}{5}\mathbb{E}\left(\sup_{0\leq s\leq t}\left|\Theta^{i,N}(s)\right|^{p}\right)
\\&\quad +C\mathbb{E}\left(\int_{0}^{t}\sum_{v=1}^{r}\left[U_{1}(\bar{X}^{i,N}(s+\bar{s}_{v}),X^{i,N}(s+\bar{s}_{v}))
|\bar{X}^{i,N}(s+\bar{s}_{v})-X^{i,N}(s+\bar{s}_{v})|\right]ds\right)^{p}
\\&\leq \frac{1}{5}\mathbb{E}\left(\sup_{0\leq s\leq t}\left|\Theta^{i,N}(s)\right|^{p}\right)
\\&\quad +C\int_0^t\sum_{v=1}^{r}\left[\mathbb{E}\left(1
+|\bar{X}^{i,N}(s+\bar{s}_{v})|^{pl_1}+|X^{i,N}(s+\bar{s}_{v})|^{pl_1}\right)^{\frac{\bar{p}}{pl_1}}\right]^{\frac{pl_1}{\bar{p}}}
\\&\quad \quad \quad \quad\left[\mathbb{E}\left(|\bar{X}^{i,N}(s+\bar{s}_{v})-X^{i,N}(s+\bar{s}_{v})|^{p}\right)^{\frac{\bar{p}}{\bar{p}-pl_1}}\right]^{\frac{\bar{p}-pl_1}{\bar{p}}}ds
\\&\leq \frac{1}{5}\mathbb{E}\left(\sup_{0\leq s\leq t}\left|\Theta^{i,N}(s)\right|^{p}\right)
\\&\quad +C\int_{0}^{t}
\left[\mathbb{E}\left(\sup_{0\leq u\leq s}(|\bar{X}^{i,N}(u)-X^{i,N}(u)|^{\frac{p\bar{p}}{\bar{p}-pl_1}})\right)\right]^{\frac{\bar{p}-pl_1}{\bar{p}}}ds
\\&\leq \frac{1}{5}\mathbb{E}\left(\sup_{0\leq s\leq t}\left|\Theta^{i,N}(s)\right|^{p}\right)+C\Delta^{\frac{p}{2}}.
\end{split}
\end{equation*}
By Assumptions \ref{Halpha}, H\"{o}lder's inequality, Young's inequality and (\ref{5.1}), we get that
\begin{equation*}
\begin{split}
&\mathbb{E}\left(\sup_{0\leq s\leq t}A^{i,N}_{4}(s)\right)
\\&\leq C\mathbb{E}\int_{0}^{t}\left|\Theta^{i,N}(s)\right|^{p-1}
\left|\alpha\left(\Gamma(X^{i,N}_{s}),\mathbb{L}_{\Gamma(X^{N}_{s})}\right)
-\alpha_{\Delta}\left(\Gamma(X^{i,N}_{s}),\mathbb{L}_{\Gamma(X^{N}_{s})}\right)\right|ds
\\&\leq C\mathbb{E}\left(\sup_{0\leq s\leq t}\left|\Theta^{i,N}(s)\right|^{p-1}\right.
\left.\int_{0}^{t}\left|\alpha\left(\Gamma(X^{i,N}_{s}),\mathbb{L}_{\Gamma(X^{N}_{s})}\right)
-\alpha_{\Delta}\left(\Gamma(X^{i,N}_{s}),\mathbb{L}_{\Gamma(X^{N}_{s})}\right)\right|ds\right)
\\&\leq \frac{1}{5}\mathbb{E}\left(\sup_{0\leq s\leq t}\left|\Theta^{i,N}(s)\right|^{p}\right)
\\&\quad +C\mathbb{E}\int_{0}^{t}\left|\alpha\left(\Gamma(X^{i,N}_{s}),\mathbb{L}_{\Gamma(X^{N}_{s})}\right)
-\alpha_{\Delta}\left(\Gamma(X^{i,N}_{s}),\mathbb{L}_{\Gamma(X^{N}_{s})}\right)\right|^{p}ds
\\&\leq \frac{1}{5}\mathbb{E}\left(\sup_{0\leq s\leq t}\left|\Theta^{i,N}(s)\right|^{p}\right)
 +C\Delta^{p\gamma}\int_{0}^{t}\mathbb{E}\left[\frac{\left|\alpha\left(\Gamma(X^{i,N}_{s}),\mathbb{L}_{\Gamma(X^{N}_{s})}\right)\right|^{2p}}
{\left(1+\Delta^{\gamma}|\alpha(\Gamma(X^{i,N}_{s}),\mathbb{L}_{\Gamma(X^{N}_{s})})|\right)^{p}}\right]ds
\\
\end{split}
\end{equation*}
\begin{equation*}
\begin{split}
&\leq \frac{1}{5}\mathbb{E}\left(\sup_{0\leq s\leq t}\left|\Theta^{i,N}(s)\right|^{p}\right)
 +C\Delta^{p\gamma}\int_{0}^{t}\mathbb{E}\left(1+\sum_{v=1}^{r}\left[\mathcal{W}_{2}(\mathbb{L}_{X^{N}(s+\bar{s}_{v})})\right.\right.
\\&\quad \left.\left.+U_{1}(X^{i,N}(s+\bar{s}_{v}),0)
|X^{i,N}(s+\bar{s}_{v})|\right]\right)^{2p}ds
\\&\leq \frac{1}{5}\mathbb{E}\left(\sup_{0\leq s\leq t}\left|\Theta^{i,N}(s)\right|^{p}\right)+C\Delta^{p\gamma}.
\end{split}
\end{equation*}
Applying Young's inequality, H\"{o}lder's inequality, Assumptions \ref{Hbeta}-\ref{initial} and the estimation of $A^{i,N}_{1}(s)$  gives that
\begin{equation*}
\begin{split}
&\mathbb{E}\left(\sup_{0\leq s\leq t}A^{i,N}_{5}(s)\right)
\\&\leq C\mathbb{E}\int_{0}^{t}\left(\left|Y^{i,N}(s)-\bar{X}^{i,N}(s)\right|^{p-2}+\left|D(Y^{i,N}(s-\rho))-D(\bar{X}^{i,N}(s-\rho))\right|^{p-2}\right)
\\&\quad \quad \quad\left[\sum_{v=1}^{r-1}|Y^{i,N}(s+\bar{s}_{v})-X^{i,N}(s+\bar{s}_{v})|^{2}\right.
\\&\quad +U_{2}^{2}(Y^{i,N}(s-\rho),X^{i,N}(s-\rho))|Y^{i,N}(s-\rho)-X^{i,N}(s-\rho)|^{2}
\\&\left.\quad +\sum_{v=1}^{r}\mathbb{W}_{2}^{2}(\mathbb{L}_{Y^{N}(s+\bar{s}_{v})},\mathbb{L}_{X^{N}(s+\bar{s}_{v})})\right]ds
\\&\leq C\mathbb{E}\int_{0}^{t}\left[\left|Y^{i,N}(s)-\bar{X}^{i,N}(s)\right|^{p}\right.
\\&\quad +U^{p}_{3}(Y^{i,N}(s-\rho),\bar{X}^{i,N}(s-\rho))|Y^{i,N}(s-\rho)-\bar{X}^{i,N}(s-\rho)|^{p}
\\&\quad \left.+\sum_{v=1}^{r-1}|Y^{i,N}(s+\bar{s}_{v})-X^{i,N}(s+\bar{s}_{v})|^{p}\right.
\\&\quad +U_{2}^{p}(Y^{i,N}(s-\rho),X^{i,N}(s-\rho))|Y^{i,N}(s-\rho)-X^{i,N}(s-\rho)|^{p}
\\&\left.\quad +\sum_{v=1}^{r}\mathbb{W}_{2}^{p}(\mathbb{L}_{Y^{N}(s+\bar{s}_{v})},\mathbb{L}_{X^{N}(s+\bar{s}_{v})})\right]ds
\\&\leq C\int_{0}^{t}\mathbb{E}\left(\sup_{0\leq u\leq s}\left|Y^{i,N}(u)-\bar{X}^{i,N}(u)\right|^{p}\right)ds
\\&\quad +C\int_{0}^{t}\left(\mathbb{E}|Y^{i,N}(s-\rho)-\bar{X}^{i,N}(s-\rho)|^{2p}\right)^{1/2}ds
\\&\quad +C\int_{0}^{t}\mathbb{E}\left(\sum_{v=1}^{r}\left|\bar{X}^{i,N}(s+\bar{s}_{v})-X^{i,N}(s+\bar{s}_{v})\right|^{p}\right)ds
\\&\quad +C\int_{0}^{t}\mathbb{E}\left[\sum_{v=1}^{r}\left(\frac{1}{N}\sum_{j=1}^{N}\left|Y^{j,N}(s+\bar{s}_{v})
-X^{j,N}(s+\bar{s}_{v})\right|^{p}\right)\right]ds
\\
\end{split}
\end{equation*}
\begin{equation*}
\begin{split}
&\leq C\int_{0}^{t}\mathbb{E}\left(\sup_{0\leq u\leq s}\left|Y^{i,N}(u)-\bar{X}^{i,N}(u)\right|^{p}\right)ds
\\&\quad +C\int_{0}^{t}\left(\mathbb{E}|Y^{i,N}(s-\rho)-\bar{X}^{i,N}(s-\rho)|^{2p}\right)^{1/2}ds+C\Delta^{\frac{p}{2}}.
\end{split}
\end{equation*}
Similar to the estimation of $A^{i,N}_{5}(t)$, using BDG's inequality, Young's inequality means that
\begin{equation*}
\begin{split}
&\mathbb{E}\left(\sup_{0\leq s\leq t}A^{i,N}_{6}(s)\right)
\\&\leq C\mathbb{E}\left[\int_{0}^{t}\left|\Theta^{i,N}(s)\right|^{2p-2}\left|\beta\left(\Gamma(Y^{i,N}_{s}),\mathbb{L}_{\Gamma(Y^{N}_{s})}\right)
-\beta\left(\Gamma(X^{i,N}_{s}),\mathbb{L}_{\Gamma(X^{N}_{s})}\right)\right|^{2}ds\right]^{1/2}
\\&\leq \frac{1}{5}\mathbb{E}\left(\sup_{0\leq s\leq t}\left|\Theta^{i,N}(s)\right|^{p}\right)
\\&\quad +C\mathbb{E}\left[\int_{0}^{t}\left|\beta\left(\Gamma(Y^{i,N}_{s}),\mathbb{L}_{\Gamma(Y^{N}_{s})}\right)
-\beta\left(\Gamma(X^{i,N}_{s}),\mathbb{L}_{\Gamma(X^{N}_{s})}\right)\right|^{2}ds\right]^{p/2}
\\&\leq \frac{1}{5}\mathbb{E}\left(\sup_{0\leq s\leq t}\left|\Theta^{i,N}(s)\right|^{p}\right)
 +C\mathbb{E}\int_{0}^{t}\left[\sum_{v=1}^{r-1}|Y^{i,N}(s+\bar{s}_{v})-X^{i,N}(s+\bar{s}_{v})|^{p}\right.
\\&\quad +U_{2}^{p}(Y^{i,N}(s-\rho),X^{i,N}(s-\rho))|Y^{i,N}(s-\rho)-X^{i,N}(s-\rho)|^{p}
\\&\left.\quad +\sum_{v=1}^{r}\mathbb{W}_{2}^{p}(\mathbb{L}_{Y^{N}(s+\bar{s}_{v})},\mathbb{L}_{X^{N}(s+\bar{s}_{v})})\right]ds
\\&\leq \frac{1}{5}\mathbb{E}\left(\sup_{0\leq s\leq t}\left|\Theta^{i,N}(s)\right|^{p}\right)+C\Delta^{\frac{p}{2}}.
\end{split}
\end{equation*}
Combining these inequalities gives that
\begin{equation*}
\begin{split}
&\mathbb{E}\left(\sup_{0\leq s\leq t}\left|\Theta^{i,N}(s)\right|^{p}\right)
\\&\leq \frac{4}{5}\mathbb{E}\left(\sup_{0\leq s\leq t}\left|\Theta^{i,N}(s)\right|^{p}\right)
+C\int_{0}^{t}\mathbb{E}\left(\sup_{0\leq u\leq s}\left|Y^{i,N}(u)-\bar{X}^{i,N}(u)\right|^{p}\right)ds
\\&\quad +C\int_{0}^{t}\left(\mathbb{E}|Y^{i,N}(s-\rho)-\bar{X}^{i,N}(s-\rho)|^{2p}\right)^{1/2}ds+C\Delta^{p\gamma}.
\end{split}
\end{equation*}
Therefore, using Assumption \ref{HD} yields that
\begin{equation*}
\begin{split}
&\mathbb{E}\left(\sup_{0\leq s\leq t}\left|Y^{i,N}(s)-\bar{X}^{i,N}(s)\right|^{p}\right)
\\&\leq C\mathbb{E}\left(\sup_{0\leq s\leq t}\left|\Theta^{i,N}(s)\right|^{p}\right)
\\&\quad +C\mathbb{E}\left(\sup_{0\leq s\leq t}\left|D(Y^{i,N}(s-\rho))-D(\bar{X}^{i,N}(s-\rho))\right|^{p}\right)
\\&\leq C\int_{0}^{t}\mathbb{E}\left(\sup_{0\leq u\leq s}\left|Y^{i,N}(u)-\bar{X}^{i,N}(u)\right|^{p}\right)ds
\\&\quad +C\left[\mathbb{E}\left(\sup_{0\leq s\leq t}|Y^{i,N}(s-\rho)-\bar{X}^{i,N}(s-\rho)|^{2p}\right)\right]^{1/2}+C\Delta^{p\gamma}.
\end{split}
\end{equation*}
Thanks to Gronwall's inequality, we have
\begin{equation*}
\begin{split}
&\mathbb{E}\left(\sup_{0\leq s\leq t}\left|Y^{i,N}(s)-\bar{X}^{i,N}(s)\right|^{p}\right)
\\&\leq C\Delta^{p\gamma}+C\left[\mathbb{E}\left(\sup_{0\leq s\leq t}|Y^{i,N}(s-\rho)-\bar{X}^{i,N}(s-\rho)|^{2p}\right)\right]^{1/2}.
\end{split}
\end{equation*}
Define $p_{j}=(M_T+2-j)p2^{M_T+1-j}$, $j=1,2,\ldots,M_{T}+1$. Applying the same technique in Lemma \ref{lem5.2} gives the desired result \eqref{5.51}. Then \eqref{5.52} is obtained by Lemma \ref{lem5.2} and \eqref{5.51} immediately. 
\end{proof}

\begin{rem}
The  optimal convergence rate is $\frac{1}{2}$, which can be obtained in Theorem \ref{thm5.3} by choosing $\gamma=\frac{1}{2}$.
\end{rem}

\begin{thm}\label{thm5.4}
Let the assumptions in Theorems \ref{thm4.1} and \ref{thm5.3} hold. Then, 
\begin{equation*}
\begin{split}
\mathbb{E}&\left(\sup_{0\leq t\leq T}|Y^{i}(t)-\bar{X}^{i,N}(t)|^{p}\right)
\\&\leq C\left\{\begin{array}{ll}
(N^{-1 / 2})^{\lambda_{T,\rho,p}}+\Delta^{p\gamma}, & \text { if } p>d/2, \\
{[N^{-1 / 2} \log(1+N)]^{\lambda_{T,\rho,p}}}+\Delta^{p\gamma}, & \text { if } p=d/2,\\
{(N^{-p / d})^{\lambda_{T,\rho,p}}}+\Delta^{p\gamma}, & \text { if }2\leq p<d/2,
\end{array}\right.
\end{split}
\end{equation*}
where $\lambda_{T,\rho,p}=\left(\frac{p-\varepsilon}{p}\right)^{\lfloor\frac{T}{\rho}\rfloor}$.
\end{thm}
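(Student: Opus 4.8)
The plan is to recognize that Theorem \ref{thm5.4} is simply a triangle-inequality synthesis of the two preceding results, requiring no new analytic estimates. The total error between the true particle $Y^{i}$ and the continuous-sample tamed EM approximation $\bar{X}^{i,N}$ splits as
\begin{equation*}
Y^{i}(t)-\bar{X}^{i,N}(t)=\left(Y^{i}(t)-Y^{i,N}(t)\right)+\left(Y^{i,N}(t)-\bar{X}^{i,N}(t)\right),
\end{equation*}
where the first bracket is the propagation-of-chaos error, controlled by Theorem \ref{thm4.1}, and the second is the strong approximation error of the scheme, controlled by \eqref{5.51} in Theorem \ref{thm5.3}.

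First I would apply the elementary inequality $|a+b|^{p}\leq 2^{p-1}(|a|^{p}+|b|^{p})$ to this decomposition, then use the subadditivity of the supremum over $t\in[0,T]$ followed by the expectation, to obtain
\begin{equation*}
\begin{split}
\mathbb{E}\left(\sup_{0\leq t\leq T}|Y^{i}(t)-\bar{X}^{i,N}(t)|^{p}\right)
\leq&\ 2^{p-1}\,\mathbb{E}\left(\sup_{0\leq t\leq T}|Y^{i}(t)-Y^{i,N}(t)|^{p}\right)\\
&+2^{p-1}\,\mathbb{E}\left(\sup_{0\leq t\leq T}|Y^{i,N}(t)-\bar{X}^{i,N}(t)|^{p}\right).
\end{split}
\end{equation*}
Next I would bound the two terms on the right separately. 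Theorem \ref{thm4.1} gives for the first term the three cases $(N^{-1/2})^{\lambda_{T,\rho,p}}$, $[N^{-1/2}\log(1+N)]^{\lambda_{T,\rho,p}}$ and $(N^{-p/d})^{\lambda_{T,\rho,p}}$ according to whether $p>d/2$, $p=d/2$ or $2\leq p<d/2$, while \eqref{5.51} bounds the second term by $C\Delta^{\gamma p}$. Adding these bounds and absorbing the factor $2^{p-1}$ together with all constants into a single $C$ yields exactly the claimed estimate, with the same trichotomy in $p$ and the same exponent $\lambda_{T,\rho,p}=\left(\frac{p-\varepsilon}{p}\right)^{\lfloor T/\rho\rfloor}$.

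The point that requires care — rather than a genuine analytic obstacle — is only to confirm that the hypotheses of Theorems \ref{thm4.1} and \ref{thm5.3} can be imposed jointly, as the statement assumes. Concretely, one must exhibit a nonempty admissible regime in which the constraint $(pl_{U}+\varepsilon)p<\varepsilon\bar{p}$ of Theorem \ref{thm4.1} holds for some $\varepsilon\in(0,1]$ while the ranges $p\in\left[2,\bar{p}\left(\frac{\hat{p}}{\bar{p}+\hat{p}l_{1}}\wedge\frac{1}{2(l_{U}+1)}\right)\right]$ and $4l_{U}+4\leq(2l_{U}+2)\hat{p}\leq\bar{p}$ of Theorem \ref{thm5.3} are simultaneously met. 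Since the moment bounds of Theorem \ref{thm3.1} and Lemma \ref{xinxinlem5.1} hold for arbitrary order, the requisite high moments are available, so a consistent choice of the parameters $p,\varepsilon,\bar{p},\tilde{p},\hat{p}$ can be fixed; once this is granted, the synthesis above is immediate and the proof is complete.
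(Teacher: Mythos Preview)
Your proposal is correct and matches the paper's approach exactly: the paper simply states that the result follows directly from Theorems \ref{thm4.1} and \ref{thm5.3}, which is precisely the triangle-inequality synthesis you describe. Your discussion of the compatibility of the parameter ranges is, if anything, more careful than the paper's one-line justification.
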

The above theorem follows directly from Theorems \ref{thm4.1} and \ref{thm5.3}.

\begin{thm}\label{ththmm}
Let the assumptions in Theorems \ref{thm4.2} and \ref{thm5.3} hold. Then, 
\begin{equation*}
\begin{split}
\mathbb{E}&\left(\sup_{0\leq t\leq T}|Y^{i}(t)-\bar{X}^{i,N}(t)|^{p}\right)
\\&\leq C\left\{\begin{array}{ll}
N^{-1 / 2}+N^{-(\bar{p}-p)/\bar{p}}+\Delta^{p\gamma}, & \text { if } p>d/2\text{ and }\bar{p}\neq2p, \\
N^{-1 / 2} \log(1+N)+N^{-(\bar{p}-p)/\bar{p}}+\Delta^{p\gamma}, & \text { if } p=d/2\text{ and }\bar{p}\neq2p,\\
N^{-p / d}+N^{-(\bar{p}-p)/\bar{p}}+\Delta^{p\gamma}, & \text { if }2\leq p<d/2.
\end{array}\right.
\end{split}
\end{equation*}
\end{thm}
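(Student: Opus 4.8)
The plan is to obtain the bound by a direct triangle-inequality decomposition, exactly in the way Theorem~\ref{thm5.4} is deduced from Theorems~\ref{thm4.1} and~\ref{thm5.3}; the only change is that the propagation-of-chaos estimate is now supplied by Theorem~\ref{thm4.2} rather than Theorem~\ref{thm4.1}. Since the hypotheses explicitly combine the assumptions of Theorems~\ref{thm4.2} and~\ref{thm5.3}, both of those conclusions are available for the same $i\in\mathbb{S}_N$, the same exponent $p$, and the same step size $\Delta$.

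First I would write, for each fixed $i\in\mathbb{S}_N$ and $t\in[0,T]$, the splitting $|Y^{i}(t)-\bar{X}^{i,N}(t)|\leq |Y^{i}(t)-Y^{i,N}(t)|+|Y^{i,N}(t)-\bar{X}^{i,N}(t)|$, then raise it to the $p$-th power with the elementary inequality $|a+b|^{p}\leq 2^{p-1}(|a|^{p}+|b|^{p})$, take the supremum over $t\in[0,T]$ and finally the expectation, to arrive at
\begin{equation*}
\begin{split}
\mathbb{E}\left(\sup_{0\leq t\leq T}|Y^{i}(t)-\bar{X}^{i,N}(t)|^{p}\right)
&\leq 2^{p-1}\mathbb{E}\left(\sup_{0\leq t\leq T}|Y^{i}(t)-Y^{i,N}(t)|^{p}\right)\\
&\quad +2^{p-1}\mathbb{E}\left(\sup_{0\leq t\leq T}|Y^{i,N}(t)-\bar{X}^{i,N}(t)|^{p}\right).
\end{split}
\end{equation*}
The first term on the right is controlled by Theorem~\ref{thm4.2}, which under the stronger conditions $U_{2}\equiv1$ and $K_{3}U_{3}=K_{D}\in(0,1)$ yields the three-case $N$-dependent rate $N^{-1/2}+N^{-(\bar{p}-p)/\bar{p}}$ together with its logarithmic and $N^{-p/d}$ variants. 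The second term is controlled by estimate~\eqref{5.51} of Theorem~\ref{thm5.3}, which gives $C\Delta^{\gamma p}$. Adding the two bounds and absorbing the factor $2^{p-1}$ into the generic constant $C$ produces exactly the stated estimate.

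Because both ingredients are already proved, there is essentially no analytic obstacle remaining; the only point requiring care is the consistency of the moment exponents. Theorem~\ref{thm4.2} is stated for $2\leq p<\bar{p}$, whereas Theorem~\ref{thm5.3} restricts $p$ to the interval $\left[2,\bar{p}\left(\frac{\hat{p}}{\bar{p}+\hat{p}l_{1}}\wedge\frac{1}{2(l_{U}+1)}\right)\right]$ under $4l_{U}+4\leq(2l_{U}+2)\hat{p}\leq\bar{p}$. I would therefore note at the outset that the admissible range of $p$ for the combined statement is the intersection of these constraints, so that both cited theorems apply simultaneously; on that range the triangle-inequality argument delivers the result without any further computation.
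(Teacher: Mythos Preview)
Your proposal is correct and matches the paper's approach: the paper simply states that the theorem ``can be achieved easily by using Theorems~\ref{thm4.2} and~\ref{thm5.3}'' without giving further details, and your triangle-inequality decomposition is precisely the obvious way to combine these two results (mirroring how Theorem~\ref{thm5.4} is obtained from Theorems~\ref{thm4.1} and~\ref{thm5.3}). Your remark on the compatibility of the admissible ranges for $p$ is a welcome clarification that the paper leaves implicit.
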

The above theorem can be achieved easily by using Theorems \ref{thm4.2} and \ref{thm5.3}.

\section{Example}
We would like to point out that our theory can cover the numerical example (5.1) in \cite{21}. Moreover, the delay component in our results can be more general. In the following two examples, we can see that the neutral term is highly nonlinear and the constraint of delay variables is general.
\begin{exa}\label{ex1}
	Consider the scalar NMSMVE
	\begin{equation}\label{exam1}
	\begin{split}
		&d[Y(t)+Y^3(t-2)]\\
		&=[-2Y(t)+Y(t-0.25)-2Y^5(t-2)+\mathbb{E}Y(t-0.25)-\mathbb{E}Y(t-0.5)]dt\\
		&~~~+[Y(t)+0.25Y(t-0.2)+\mathbb{E}Y(t-0.4)]dB(t),
	\end{split}
\end{equation}
	with the initial data $\xi(t)=|t|^\frac{1}{2}+4$, $t\in[-2,0]$. 
	All assumptions are satisfied. We only check the third inequality in Assumption \ref{Halpha}:
\begin{equation*}
	\begin{split}
		&(x_1+x_5^3-y_1-y_5^3)(-2x_1+x_3-2x_5^5+2y_1-y_3+2y_5^5)\\
		&=(x_1-y_1)(-2x_1+2y_1+x_3-y_3)+(x_1-y_1)(-2x_5^5+2y_5^5)\\
		&+(x_5^3-y_5^3)(-2x_1+2y_1+x_3-y_3)+(x_5^3-y_5^3)(-2x_5^5+2y_5^5)\\
		&\leq -2|x_1-y_1|^2+\frac{1}{2}|x_1-y_1|^2+\frac{1}{2}|x_3-y_3|^2+\frac{1}{2}|x_1-y_1|^2+\\
		&+\frac{1}{2}|x_5-y_5|^2|x_5^4+x_5^3y_5+x_5^2y_5^2+x_5y_5^3+y_5^4|^2\\
		&+\frac{1}{2}|x_5-y_5|^2|x_5^2+x_5y_5+y_5^2|^2+4|x_1-y_1|^2+|x_3-y_3|^2\\
		&+|x_5-y_5|^2(x_5^2+x_5y_5+y_5^2)(x_5^4+x_5^3y_5+x_5^2y_5^2+x_5y_5^3+y_5^4)\\
		&\leq 3|x_1-y_1|^2+\frac{3}{2}|x_3-y_3|^2+25(1+x_5^8+y_5^8)|x_5-y_5|^2.
	\end{split}
\end{equation*}
Hence, Assumption \ref{Halpha} holds with $U_2(x_5,y_5)=1+x_5^4+y_5^4$. 
\end{exa}
\begin{figure}[htbp] 
	\centering
	\includegraphics[height=6.5cm,width=8cm]{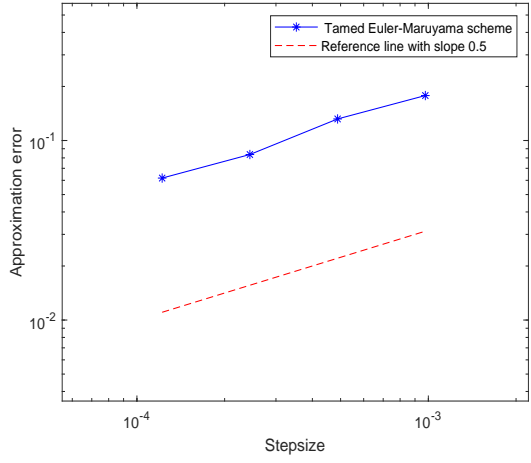}
	\caption{\label{tu1} The convergence rate of tamed EM scheme for (\ref{exam1})}
\end{figure}
\begin{exa}\label{ex2}
	Consider the two-dimensional NMSMVE
	\begin{equation}\label{exam2}
		\begin{split}
			&d\left[\begin{array}{c}
				Y_1(t)+2\sin (Y_1(t-4)) \\
				Y_2(t)+4Y_2^2(t-4)
			\end{array}\right]\\
			&=\left[\begin{array}{c}
			-3Y_2(t)+Y_1(t-0.1)-4Y_1^3(t-4)+\mathbb{E}Y_2(t-0.4)-3\mathbb{E}Y_1(t-1) \\
			-4(Y_2(t)+4Y_2^2(t-4))|Y_2(t)+4Y_2^2(t-4)|+30Y_2^2(t-4)+6Y_2(t)-2Y_2^5(t-4)
		\end{array}\right]dt\\
			&~~~+\left[\begin{array}{c}
				2Y_2(t)+\mathbb{E}Y_1(t-0.1) \\
				4Y_1(t)+\mathbb{E}Y_2(t-0.1)
			\end{array}\right]dB(t),
		\end{split}
	\end{equation}
	with the initial data $\xi(t)=|t|^\frac{2}{3}+1$, $t\in[-4,0]$. Let us make an explanation for $x_{ab}\in\mathbb{R}$: it is the $b$-th  value of the $a$-th element in the solution $Y(t)$. For example, $Y_1(t)=x_{11}, Y_2(t)=x_{21}, Y_1(t-4)=x_{14}, Y_2(t-4)=x_{24}$.
	All assumptions are satisfied. We only check the third inequality in Assumption \ref{Halpha}:
	\begin{equation*}
		\begin{split}
			&\left[\left(\begin{array}{c}
				x_{11}+2\sin(x_{14}) \\
				x_{21}+4x_{24}^2
			\end{array}\right)-\left(\begin{array}{c}
			y_{11}+2\sin(y_{14}) \\
			y_{21}+4y_{24}^2
		\end{array}\right)\right]^T\\
			&\left[\left(\begin{array}{c}
				-3x_{21}+x_{12}-4x_{14}^3\\
				-4(x_{21}+4x_{24}^2)|x_{21}+4x_{24}^2|+30x_{24}^2+6x_{21}-2x_{24}^5
			\end{array}\right)\right.\\
		&~~~\left.-\left(\begin{array}{c}
				-3y_{21}+y_{12}-4y_{14}^3\\
				-4(y_{21}+4y_{24}^2)|y_{21}+4y_{24}^2|+30y_{24}^2+6y_{21}-2y_{24}^5
			\end{array}\right)\right]\\
			&=[x_{11}+2\sin(x_{14})-y_{11}-2\sin(y_{14})][-3x_{21}+x_{12}-4x_{14}^3+3y_{21}-y_{12}+4y_{14}^3]\\
			&~~~+[x_{21}+4x_{24}^2-y_{21}-4y_{24}^2][-4(x_{21}+4x_{24}^2)|x_{21}+4x_{24}^2|+30x_{24}^2+6x_{21}-2x_{24}^5\\
			&~~~~~~+4(y_{21}+4y_{24}^2)|y_{21}+4y_{24}^2|-30y_{24}^2-6y_{21}+2y_{24}^5]\\
			&=:NU_1+NU_2.
			\end{split}
	\end{equation*}
Using the Young inequality and the inequality $|\sin A-\sin B|\leq|A-B|$ for any $A,B\in \mathbb{R}$ gives that
		\begin{equation*}
		\begin{split}	
			NU_1=&-3(x_{11}-y_{11})(x_{21}-y_{21})+(x_{11}-y_{11})(x_{12}-y_{12})\\
			&-4(x_{11}-y_{11})(x_{14}^3-y_{14}^3-6(\sin(x_{14})-\sin(y_{14}))(x_{21}-y_{21}))\\
			&+2(\sin(x_{14})-\sin(y_{14}))(x_{12}-y_{12})-8(\sin(x_{14})-\sin(y_{14}))(x_{14}^3-y_{14}^3)\\
			\leq& \frac{3}{2}|x_{11}-y_{11}|^2+\frac{3}{2}|x_{21}-y_{21}|^2+\frac{1}{2}|x_{11}-y_{11}|^2+\frac{1}{2}|x_{12}-y_{12}|^2\\
			&+2|x_{11}-y_{11}|^2+2|x_{14}-y_{14}|^2|x_{14}^2+x_{14}y_{14}+y_{14}^2|^2\\
			&+3|x_{14}-y_{14}|^2+3|x_{21}-y_{21}|^2+|x_{14}-y_{14}|^2+|x_{12}-y_{12}|^2\\
			&+4|x_{14}-y_{14}|^2+4|x_{14}-y_{14}|^2|x_{14}^2+x_{14}y_{14}+y_{14}^2|^2.
		\end{split}
	\end{equation*}
Note that
\begin{equation*}
	\begin{split}	
		NU_2=&[(x_{21}+4x_{24}^2)-(y_{21}+4y_{24}^2)][-4(x_{21}+4x_{24}^2)|x_{21}+4x_{24}^2|\\
		&~~~+4(y_{21}+4y_{24}^2)|y_{21}+4y_{24}^2|+6(x_{21}+4x_{24}^2)-6(y_{21}+4y_{24}^2)]\\
		&+6[(x_{21}+4x_{24}^2)-(y_{21}+4y_{24}^2)](x_{24}^2-y_{24}^2)\\
		&-2[(x_{21}+4x_{24}^2)-(y_{21}+4y_{24}^2)](x_{24}^5-y_{24}^5).
			\end{split}
	\end{equation*}
Applying the inequality $(|A|+|B|)(|A|-|B|)^2\leq(A-B)(A|A|-B|B|)$ for any $A,B\in \mathbb{R}$ leads to
	\begin{equation*}
		\begin{split}
			&[(x_{21}+4x_{24}^2)-(y_{21}+4y_{24}^2)][-4(x_{21}+4x_{24}^2)|x_{21}+4x_{24}^2|\\
			&~~~+4(y_{21}+4y_{24}^2)|y_{21}+4y_{24}^2|+6(x_{21}+4x_{24}^2)-6(y_{21}+4y_{24}^2)]\\	
		&\leq -4[|x_{21}+4x_{24}^2|+|y_{21}+4y_{24}^2|][|x_{21}+4x_{24}^2|-|y_{21}+4y_{24}^2|]^2\\
		&~~~+6|(x_{21}+4x_{24}^2)-(y_{21}+4y_{24}^2)|^2
		\\
		&\leq 12|x_{21}-y_{21}|^2+192|x_{24}-y_{24}|^2|x_{24}+y_{24}|^2.
			\end{split}
	\end{equation*}
Thus,		
			\begin{equation*}
			\begin{split}
		NU_2\leq& 12|x_{21}-y_{21}|^2+192|x_{24}-y_{24}|^2|x_{24}+y_{24}|^2\\
		&+3|x_{21}-y_{21}|^2+3|x_{24}-y_{24}|^2|x_{24}+y_{24}|^2+24|x_{24}-y_{24}|^2|x_{24}+y_{24}|^2\\
		&+|x_{21}-y_{21}|^2+|x_{24}-y_{24}|^2|x_{24}^4+x_{24}^3y_{24}+x_{24}^2y_{24}^2+x_{24}y_{24}^3+y_{24}^4|^2\\
		&+8|x_{24}-y_{24}|^2|x_{24}+y_{24}||x_{24}^4+x_{24}^3y_{24}+x_{24}^2y_{24}^2+x_{24}y_{24}^3+y_{24}^4|.
	\end{split}
\end{equation*}
Combining these results means that
	\begin{equation*}
		\begin{split}	
		NU_1+NU_2\leq	& 21\left|\left(\begin{array}{c}
				x_{11} \\
				x_{21}
			\end{array}\right)-\left(\begin{array}{c}
				y_{11} \\
				y_{21}
			\end{array}\right)\right|^2
			+\frac{3}{2}\left|\left(\begin{array}{c}
				x_{12} \\
				x_{22}
			\end{array}\right)-\left(\begin{array}{c}
				y_{12} \\
				y_{22}
			\end{array}\right)\right|^2\\
			&
			+590\left(1+\left|\left(\begin{array}{c}
				x_{14} \\
				x_{24}
			\end{array}\right)\right|^8+\left|\left(\begin{array}{c}
				y_{14} \\
				y_{24}
			\end{array}\right)\right|^8\right)\left|\left(\begin{array}{c}
				x_{14} \\
				x_{24}
			\end{array}\right)-\left(\begin{array}{c}
				y_{14} \\
				y_{24}
			\end{array}\right)\right|^2.
		\end{split}
	\end{equation*}
	Hence, Assumption \ref{Halpha} is satisfied with $$U_2\left(\left(\begin{array}{c}
		x_{14} \\
		x_{24}
	\end{array}\right),\left(\begin{array}{c}
	y_{14} \\
	y_{24}
\end{array}\right)\right)=1+\left|\left(\begin{array}{c}
x_{14} \\
x_{24}
\end{array}\right)\right|^8+\left|\left(\begin{array}{c}
y_{14} \\
y_{24}
\end{array}\right)\right|^8.$$ 
\end{exa}
In the numerical simulation, set $T=4$ and $N=1000$. The numerical solution with $\Delta=2^{-16}$ is regarded as the true solution, since the true solution cannot be expressed explicitly. The error is defined by
\begin{equation*}
		err=\left[\frac{1}{N}\sum_{i=1}^{N}|X_u^{i,N}(T)-X_{u_{l}}^{i,N}(T)|^2\right]^{\frac{1}{2}},
\end{equation*}	
where $u$ means the level of the time discretization, and $u_l\in \{u_1,u_2,u_3\}$ which match $\Delta=2^{-15}, 2^{-13},2^{-12},2^{-11}$.
From Figures \ref{tu1} and \ref{tu2}, we observe that the rate of convergence is about 0.5, which supports the findings.
\begin{figure}[htbp] 
	\centering
	\includegraphics[height=6.5cm,width=8cm]{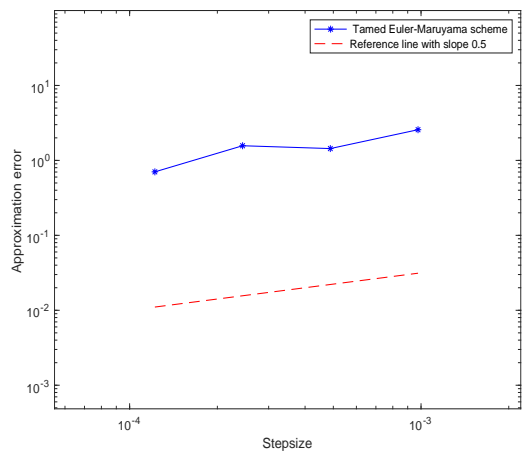}
	\caption{\label{tu2} The convergence rate of tamed EM scheme for (\ref{exam2})}
\end{figure}

	\section*{Acknowledgements}
This work is supported by the National Natural Science Foundation of China (Grant Nos. 12271368, 11871343 and 61876192) and Shanghai Rising-Star Program (Grant No. 22QA1406900).


\begin{thebibliography}{999}

\bibitem{24} A. Ahlborn, U. Parlitz, Stabilizing unstable steady states using multiple delay feedback control, Phys. Rev. Lett. 93 (26) (2004) 264101.

\bibitem{28} J. Bao, X. Huang, Approximations of McKean-Vlasov stochastic differential equations with irregular coefficients, J. Theoret. Probab. 35 (2) (2022) 1187-1215.

\bibitem{18} J. Bao, C. Reisinger, P. Ren, W. Stockinger, First-order convergence of Milstein schemes for McKean-Vlasov equations and interacting particle systems, Proc. R. Soc. A 477 (2245) (2021) 20200258.


\bibitem{19} J. Bao, C. Reisinger, P. Ren, W. Stockinger, Milstein schemes and antithetic multi-level Monte-Carlo sampling for delay McKean-Vlasov equations and interacting particle systems, arXiv preprint, arXiv: 2005.01165, 2020.

\bibitem{22} J. Bao, C. Yuan, Convergence Rate of EM Scheme for SDDEs, Proc. Amer. Math. Soc. 141 (9) (2013) 3231-3243.

\bibitem{30} S. Biswas, C. Kumar, G.D. Reis, C. Reisinger, Well-posedness and tamed Euler schemes for McKean-Vlasov equations driven by Lévy noise, arXiv preprint, arXiv: 2010.08585, 2020.

\bibitem{35} M. Bossy, D. Talay, A stochastic particle method for the McKean-Vlasov and the Burgers equation, Math. Comp. 66 (217) (1997) 157-192.


\bibitem{34} M. Bossy, D. Talay, Convergence rate for the approximation of the limit law of weakly interacting particles: application to the Burgers equation, Ann. Appl. Probab. 6 (3) (1996) 818-861.



\bibitem{2} R. Buckdahn, J. Li, J. Ma, A mean-field stochastic control problem with partial observations, Ann. Appl. Probab. 27 (5) (2017) 3201-3245.

\bibitem{7} P. Cardaliaguet, Notes on mean field games, R. Technical report, 2010.

\bibitem{3} R. Carmona, F. Delarue, Probabilistic theory of mean field games with applications. I. volume 83 of Probability Theory and Stochastic Modelling, Springer, Cham, 2018.



\bibitem{38} J.F. Chassagneux, A. Jacquier, I. Mihaylov, An explicit Euler scheme with strong rate of convergence for financial SDEs with non-Lipschitz coefficients, SIAM J. Financial Math. 7 (1) (2016) 993-1021.

\bibitem{21} Y. Cui, X. Li, Y. Liu, C. Yuan, Explicit Numerical Approximations for McKean-Vlasov Neutral Stochastic Differential Delay Equations, arXiv preprint, arXiv: 2105.04175, 2021.

\bibitem{36} S. Deng, C. Fei, W. Fei, X. Mao, Tamed EM schemes for neutral stochastic differential delay equations with superlinear diffusion coefficients, J. Comput. Appl. Math. 388 (2021) 113269.


\bibitem{11} X. Fan, X. Huang, Y. Suo, C. Yuan, Distribution dependent SDEs driven by fractional Brownian motions, Stochastic Process. Appl. 151 (2022) 23-67.

\bibitem{25} C. Fei, W. Fei, X. Mao, M. Shen, L. Yan, Stability analysis of highly nonlinear hybrid multiple-delay stochastic differential equations, J. Appl. Anal. Comput. 9 (3) (2019) 1053-1070.


\bibitem{29} N. Fournier, A. Guillin, On the rate of convergence in Wasserstein distance of the empirical measure, Probab. Theory Related Fields 162 (3) (2015) 707-738.



\bibitem{12} X. Huang, P. Ren, F.Y. Wang, Distribution dependent stochastic differential equations, Front. Math. China 16 (2) (2021) 257-301.


\bibitem{13} X. Huang, F.Y. Wang, Distribution dependent SDEs with singular coefficients, Stochastic Process. Appl. 129 (11) (2019) 4747-4770.

\bibitem{31} X. Huang, X. Wang, Path Dependent McKean-Vlasov SDEs with Hölder Continuous Diffusion, arXiv preprint, arXiv: 2207.04274, 2022.

\bibitem{37} M. Hutzenthaler, A. Jentzen, P.E. Kloeden, Strong and weak divergence in finite time
of Euler’s method for stochastic differential equations with non-globally Lipschitz continuous coefficients, Proc. Roy. Soc. A Math. Phys. Eng. Sci. 467 (2130) (2011) 1563-1576.

\bibitem{39} M. Hutzenthaler, A. Jentzen, P.E. Kloeden, Strong convergence of an explicit numerical method for SDEs with nonglobally Lipschitz continuous coefficients, Ann. Appl. Probab. 22 (4) (2012) 1611-1641.

\bibitem{42} Y. Ji, C. Yuan, Tamed EM scheme of neutral stochastic differential delay equations, J. Comput. Appl. Math. 326 (2017) 337-357.



\bibitem{43} X. Li, W. Cao, On mean-square stability of two-step Maruyama methods for nonlinear neutral stochastic delay differential equations, Appl. Math. Comput. 261 (2015) 373-381.

\bibitem{26} X. Li, Q. Zhu, D. O’Regan, pth Moment exponential stability of impulsive stochastic functional differential equations and application to control problems of NNs, J. Franklin Inst. 351 (9) (2014) 4435-4456.

\bibitem{46} Y. Li, X. Mao, Q. Song, F. Wu, G. Yin, Strong convergence of Euler-Maruyama schemes for McKean-Vlasov stochastic differential equations under local Lipschitz conditions of state variables, IMA J. Numer. Anal. 2022.

\bibitem{4} H.P. McKean, A class of Markov processes associated with nonlinear parabolic equations, Proc. Nat. Acad. Sci. 56 (6) (1966) 1907-1911.


\bibitem{5} H.P. McKean, Fluctuations in the kinetic theory of gases, Comm. Pure Appl. Math. 28 (4) (1975) 435-455.


\bibitem{6} H.P. McKean, Propagation of chaos for a class of non-linear parabolic equations, Lecture Series in Differential Equations 2 (1967) 41-57.


\bibitem{27} A. Rathinasamy, K. Balachandran, Mean square stability of semi-implicit Euler method for linear stochastic differential equations with multiple delays and Markovian switching, Appl. Math. Comput. 206 (2) (2008) 968-979.

\bibitem{17} G.D. Reis, S. Engelhardt, G. Smith, Simulation of McKean-Vlasov SDEs with super-linear growth, IMA J. Numer. Anal. 42 (1) (2022) 874-922.

\bibitem{8} G.D. Reis, W. Salkeld, J. Tugaut, Freidlin-Wentzell LDP in path space for McKean-Vlasov equations and the functional iterated logarithm law, Ann. Appl. Probab. 29 (3) (2019) 1487-1540.



\bibitem{20} P. Ren, J.L. Wu, Least squares estimator for path-dependent McKean-Vlasov SDEs via discrete-time observations, Acta Math. Sci. Ser. B Engl. Ed. 39 (3) (2019) 691-716.


\bibitem{14} M. R\"ockner, X. Zhang, Well-posedness of distribution dependent SDEs with singular drifts, Bernoulli 27 (2) (2021) 1131-1158.

\bibitem{40} S. Sabanis, A note on tamed Euler approximations, Electron. Commun. Probab. 18 (2013) 1-10.


\bibitem{15} J. Shao, D. Wei, Propagation of chaos and conditional McKean-Vlasov SDEs with regime-switching, Front. Math. China (2021) 1-16.



\bibitem{9} A.S. Sznitman, Topics in propagation of chaos, Springer, Berlin, Heidelberg, 1991.

\bibitem{44} L. Tan, Almost sure convergence rate of theta-EM scheme for neutral SDDEs, J. Comput. Appl. Math. 342 (2018) 25-36.

\bibitem{33} L. Tan, C. Yuan. Convergence rates of theta-method for NSDDEs under non-globally Lipschitz continuous coefficients, Bull. Math. Sci. 9 (03) (2019) 1950006.

\bibitem{41} L. Tan, C. Yuan, Strong convergence of a tamed theta scheme for NSDDEs with one-sided Lipschitz drift, Appl. Math. Comput. 338 (2018) 607-623.



\bibitem{10} F.Y. Wang, Distribution dependent SDEs for Landau type equations, Stochastic Process. Appl. 128 (2) (2018) 595-621.


\bibitem{1} J. Wen, X. Wang, S. Mao, X. Xiao, Maximum likelihood estimation of McKean-Vlasov stochastic differential equation and its application, Appl. Math. Comput. 274 (2016) 237-246.


\bibitem{23} F. Wu, X. Mao, K. Chen, The Cox-Ingersoll-Ross model with delay and strong convergence of its Euler-Maruyama approximate solutions, Appl. Numer. Math. 59 (10) (2009) 2641-2658.

\bibitem{16} H. Wu, J. Hu, S. Gao, C. Yuan, Stabilization of Stochastic McKean-Vlasov Equations with Feedback Control Based on Discrete-Time State Observation, SIAM J. Control Optim. 60 (5) (2022) 2884-2901.































\end{thebibliography}
\end{document}